\newtheorem{thm}{Theorem}[section]
\newtheorem{cor}[thm]{Corollary}
\newtheorem{prop}[thm]{Proposition}
\newtheorem{rem}[thm]{Remark}
\newtheorem{conj}[thm]{Conjecture}
\newcommand {\pare}[1] {\left( {#1} \right)}%parenthèses
\newcommand {\lfrf}[1] {\lfloor {#1} \rfloor}%partie entière
\newcommand{\bb}[1]{\mathbb{#1}}
\newcommand{\cal}[1]{\mathcal{#1}}
\newcommand{\interventier}[2]{\ensuremath{[\![#1,#2]\!]}}
\newcommand{\Lim}[2]{\lim\limits_{#1\rightarrow #2}}%limite en-dessous 
\newcommand{\indic}{{\bf 1}}%indicatrice
\title{Iterated random walks in random scenery  (PAPAPA)}
\author{Nadine Guillotin-Plantard}
\address{Nadine Guillotin-Plantard, Universit\'e Claude Bernard Lyon 1, CNRS, Ecole Centrale de Lyon, INSA Lyon, Universit\'e Jean Monnet, ICJ UMR5208, 69622 Villeurbanne, France.}
\email{nadine.guillotin-plantard@univ-lyon1.fr}
\author{Françoise Pène}
\address{Françoise Pène, Univ Brest, Université de Brest,
UMR CNRS 6205, LMBA, 6 avenue Le Gorgeu, 29238 Brest cedex, France.}
\email{francoise.pene@univ-brest.fr}
\author{Frédérique Watbled}
\address{Frédérique Watbled, Université Bretagne Sud, UMR CNRS 6205, LMBA, F-56000 Vannes, France}
\email{frederique.watbled@univ-ubs.fr }
\begin{document}

\begin{abstract}
We establish a limit theorem for a new model of 3-dimensional random walk in an inhomogeneous lattice with random orientations. This model can be seen as a 3-dimensional version of the Matheron and de Marsily model~\cite{MdM}. This new model leads us naturally to the study of iterated random walk in random scenery, which is a new process that can be described as a random walk in random scenery evolving in a second random scenery. We use the french acronym PAPAPA for this process unprecedented in literature, and answer a question about its stochastic behaviour asked about twenty years ago by Stéphane Le Borgne.\\
{\bf Mathematical Subject Classification (2020):} 60F05, 60G52\\
{\bf Key words and phrases:} Random walk in random scenery, random walk on randomly oriented lattices, stable process, stationary increments.
\end{abstract}

\maketitle
%\begin{center}\today\end{center}
\tableofcontents

\section{Model and main result}\label{secmod}

We consider a random walk in a stratified medium in $\mathbb Z^3$ with some random orientation of the horizontal lines. These orientations will be given by two independent sequences of Rademacher random variables 
$(\xi_z^{(1)})_{z\in\mathbb Z}$ and $(\xi_x^{(2)})_{x\in\mathbb Z}$. 
For all $x_0,y_0,z_0\in\mathbb  Z$,
\begin{itemize}
\item the lines parallel to $(0,0,1)$ are not oriented, the particle has the same probability to make a displacement to the top and to the bottom.
\item the lines parallel to $(1,0,0)$ of the plane
$\{(x,y,z_0) ; x,y \in \mathbb Z\}$ have orientation $(\xi^{(1)}_{z_0},0,0)$, i.e. the lines of the form $\{(x,y_0,z_0); x\in\bb Z\}$ have orientation 
$(\xi^{(1)}_{z_0},0,0)$.
\item the lines parallel to $(0,1,0)$ of the plane
$\{(x_0,y,z) ; y,z \in \mathbb Z\}$ have orientation $(0,\xi_{x_0}^{(2)},0)$, i.e. the lines of the form $\{(x_0,y,z_0); y\in\bb Z\}$ have orientation 
$(0,\xi_{x_0}^{(2)},0)$.
\end{itemize}

	\begin{figure}[h!] \caption{Example of orientations of  horizontal and vertical planes (in black and red) and at a point (in blue).}\label{GrapheWang0}
	\begin{tikzpicture}[scale=4]

	\draw[thick]  (-.2,1.8) -- (.3,1.8) -- (.13,1.75) ;
    \draw[thick]   (.3,1.8) -- (.13,1.85) ; 
     \draw[left]   (.3,1.71)  node {\huge $x$};
     \draw[thick]  (-.2,1.8) -- (-.2,2.3) -- (-.25,2.13) ;
    \draw[thick]   (-.2,2.3) -- (-.15,2.13) ; 
     \draw[left]   (-.2,2.3)  node {\huge $z$};
    \draw[thick]   (-.2,1.8)-- (.1,2.1)-- (0.05,1.95) ; 
        \draw[thick]   (.1,2.1)-- (-.08,2) ; 
     \draw[right]   (.1,2.1)  node {\huge $y$};

	\draw (.4,.4)--(0,0) -- (3.2,0) ;	
%		\draw[dotted] (0,0) -- (1,1);
    \draw (.2,.1) -- (3,.1)--(2.83,.05) ;
    \draw  (3,.1)--(2.83,.15) ;
    \draw (.3,.2) -- (3.1,.2)--(2.93,.15) ;
    \draw  (3.1,.2)--(2.93,.25) ;
    \draw (.4,.3) -- (3.2,.3)--(3.03,.25) ;
    \draw  (3.2,.3)--(3.03,.35) ;
    
    	\draw[thick] (.4,.85)--(0,.45) -- (3.2,.45) ;	
%		\draw[dotted] (0,.45) -- (1,1.45);
    \draw[thick] (.37,.5)--(.2,.55) -- (3,.55) ;
    \draw[thick]  (.2,.55)--(.37,.60) ;
    \draw[thick] (.47,.6)--(.3,.65) -- (3.1,.65) ;
    \draw[thick]  (.3,.65)--(.47,.7) ;
    \draw[thick]  (.57,.7)--(.4,.75) -- (3.2,.75) ;
    \draw[thick]  (.4,.75)--(.57,.8) ;

	\draw (.4,1.3)--(0,.9) -- (3.2,.9) ;
%		\draw[dotted] (0,.9) -- (1,1.9);
    \draw (.37,.95)--(.2,1) -- (3,1) ;
    \draw  (.2,1)--(.37,1.05) ;
    \draw (.47,1.05)--(.3,1.10) -- (3.1,1.10) ;
    \draw  (.3,1.10)--(.47,1.15) ;
    \draw  (.57,1.15)--(.4,1.2) -- (3.2,1.2) ;
    \draw  (.4,1.2)--(.57,1.25) ;
	
	\draw[thick] (.5,1.85)--(0,1.35) -- (3.2,1.35) ;
    \draw[thick] (.2,1.45) -- (3,1.45) --(2.83,1.4) ;
    \draw[thick]  (3,1.45) --(2.83,1.50) ;
    \draw[thick]  (.3,1.55) -- (3.1,1.55) -- (2.93,1.5) ;
    \draw[thick]  (3.1,1.55)--(2.93,1.6) ;
    \draw[thick]  (.4,1.65) -- (3.2,1.65)--(3.03,1.6) ;
    \draw[thick]  (3.2,1.65)--(3.03,1.7) ;

	\draw[red,thick] (.45,1.6)--(.45,-.2)--(1.05,.4)--(1.05,2.2) --(.45,1.6) ;
	\draw[red,thick] (.47,0.02)--(.85,.38)--(.8,.25);
	\draw[red,thick] (.85,.38)--(.67,.28);
		\draw[red,thick] (.47,0.47)--(.85,.83)--(.8,.7);
	\draw[red,thick] (.85,.83)--(.67,.73);
		\draw[red,thick] (.47,0.92)--(.85,1.27)--(.8,1.15);
	\draw[red,thick] (.85,1.27)--(.67,1.18);
		\draw[red,thick] (.47,1.37)--(.85,1.72)--(.8,1.6);
	\draw[red,thick] (.85,1.72)--(.67,1.63);

\draw[red,thick] (1.35,1.6)--(1.35,-.2)--(1.95,.4)--(1.95,2.2) --(1.35,1.6) ;
	\draw[red,thick] (1.37,0.02)--(1.75,.38)--(1.7,.25);
	\draw[red,thick] (1.75,.38)--(1.57,.28);
		\draw[red,thick] (1.37,0.47)--(1.75,.83)--(1.7,.7);
	\draw[red,thick] (1.75,.83)--(1.57,.73);
		\draw[red,thick] (1.37,0.92)--(1.75,1.27)--(1.7,1.15);
	\draw[red,thick] (1.75,1.27)--(1.57,1.18);
		\draw[red,thick] (1.37,1.37)--(1.75,1.72)--(1.7,1.6);
	\draw[red,thick] (1.75,1.72)--(1.57,1.63);

	\draw[red,thick] (2.25,1.6)--(2.25,-.2)--(2.85,.4)--(2.85,2.2) --(2.25,1.6) ;
	\draw[red,thick] (2.32,.15)-- (2.27,0.02)--(2.65,.38);
	\draw[red,thick] (2.27,0.02)--(2.4,.05);
		\draw[red,thick] (2.32,.6)-- (2.27,0.47)--(2.65,.83);
	\draw[red,thick] (2.27,0.47)--(2.4,.5);
		\draw[red,thick] (2.32,1.05)-- (2.27,0.92)--(2.65,1.28);
	\draw[red,thick] (2.27,0.92)--(2.4,.95);
		\draw[red,thick] (2.32,1.55)-- (2.27,1.37)--(2.65,1.73);
	\draw[red,thick] (2.27,1.37)--(2.4,1.4);

		\filldraw[blue] (2.46,1.1) circle[radius=.8pt];
	\draw[blue,thick](2.46,1.1) -- (2.27,.92)--(2.32,1.05);
		\draw[blue,thick] (2.27,0.92)--(2.4,.95);
	\draw[blue,thick](2.46,1.1) -- (2.46,1.55)--(2.54,1.4);
		\draw[blue,thick](2.46,1.55)--(2.38,1.4);
			\draw[blue,thick](2.46,1.1) -- (2.46,.65)--(2.54,.8);
		\draw[blue,thick](2.46,.65)--(2.38,.8);
		\draw[blue,thick](2.46,1.1) -- (1.96,1.1)--(2.13,1.15);
				\draw[blue,thick] (1.96,1.1)--(2.13,1.05);

	\end{tikzpicture}
\end{figure}
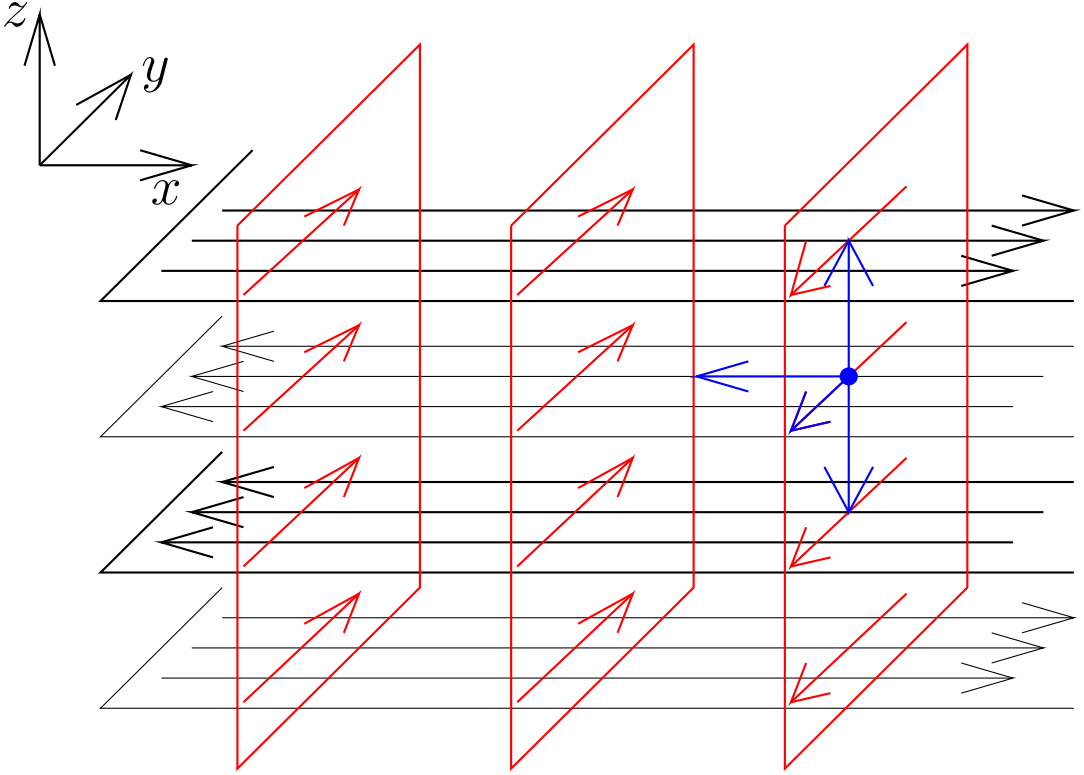

We consider a particle starting from the origin evolving in this medium making at each step a displacement in each of the three dimensions. Given the double scenery $(\xi^{(1)}_z,\xi^{(2)}_x)$, the motion is described by a Markov chain $\left(M_n=(X_n,Y_n,Z_n)\right)_{n\in\mathbb N}$ starting from the origin and with increments having the following distribution~:  
\[
M_n-M_{n-1}=
\left\{\begin{aligned}
&\left(\xi_{Z_{n-1}}^{(1)},\xi^{(2)}_{X_{n-1}},1\right)\textrm{ with probability }\frac{1}{2}\\
&\left(\xi_{Z_{n-1}}^{(1)},\xi^{(2)}_{X_{n-1}},-1\right)\textrm{ with probability }\frac{1}{2}
\end{aligned}\right.\, .\]
This process is given by 
\[
M_n=\left( X_n,Y_n,Z_n \right)\, ,
\]
with
\[
X_n:=\sum_{k=1}^{n}\xi^{(1)}_{Z_{k-1}},\quad Y_n:=\sum_{k=1}^{n}\xi^{(2)}_{X_{k-1}}\quad\mbox{and}\quad 
Z_n:=\sum_{k=1}^{n}\eta_k\, 
\]
where $(\eta_k)_{k\in\mathbb N^*}$ is a sequence of independent centered Rademacher random variables, independent of $(\xi_z^{(1)})_{z\in\mathbb Z}$ and $(\xi_x^{(2)})_{x\in\mathbb Z}$.
In this model, $(Z_n)_{n\in\mathbb N }$ is a simple random walk (PA=PA(1) in french, standing for \textit{Promenade Aléatoire}), $(X_n)_{n\in\mathbb N}$ is a random walk in random scenery (PAPA=PA(2) in french, standing for \textit{Promenade Aléatoire en Paysage Aléatoire}), and $(Y_n)_{n\in\mathbb N }$ is a random walk in random scenery in random scenery (PAPAPA=PA(3) in french). The question
of the asymptotic behaviour of PAPAPA was asked by Stéphane Le Borgne to the second  author about twenty years ago. We are now able to  answer it thanks to the developments made in the past two decades.  \\
In this paper, the convergence in distribution for a family of processes stands for weak
convergence in the space $\mathcal{D}=D([0,\infty[,\mathbb R)$ of processes with c\`adl\`ag trajectories equipped with the $J_1$-Skorohod topology on every compact subset.\\

{\bf Convergence in distribution of the PA.}
From Donsker's Theorem, the family of processes 
\begin{equation}\label{PA1}
\left(\mathcal P_n^{(1)} :=(Z_{\lfloor nt\rfloor}/n^{1/2})_t\right)_{n\in\mathbb N^*}
\end{equation}
converges in distribution to a standard Brownian motion $B$.\\
{\bf Convergence in distribution of the PAPA.}
It was proved in~\cite{KS-1979}  that 
\begin{equation}\label{PA2}
\left(\mathcal P_n^{(2)}:=(X_{\lfloor nt\rfloor}/n^{3/4})_t\right)_{n\in\mathbb N^*}
\end{equation}
converges in distribution
to the Kesten-Spitzer process $\Delta$ given by
\[\Delta(t)=\int_{\bb R} L^{(B)}(t,x)\,dW^{(1)}(x)\, ,\]
where $L^{(B)}$ is the local time of $B$ and $W^{(1)}$ is a
bilateral Brownian motion, independent of $B$.
Here, the bilateral Brownian motion $(W^{(1)}(x))_{x\in\mathbb R}$ is defined with real time and is simply a pair of independent Brownian motions
$(W^{(1)}_{+}, W^{(1)}_{-})$ so that the limiting process is namely defined by
\begin{equation}\label{4.5}
\Delta(t)=\int_{0}^{+ \infty}L^{(B)}(t,x)\, {\rm d}W^{(1)}_{+}(x)+\int_{0}^{+
\infty}L^{(B)}(t,-x)\, {\rm d}W^{(1)}_{-}(x).
\end{equation}
{\bf Joint convergence of the PA, PAPA and PAPAPA.} 
The existence and continuity of the local time $L^{(\Delta)}$ of $\Delta$ was established in~\cite{CGPPS-AOP-2014}. 
In this manuscript, we will prove the following limit theorem
that deals with the joint convergence of 
\[
\left(\mathcal P_n^{(3)}:=(Y_{\lfloor nt\rfloor}/n^{5/8})_t\right)_{n\in\mathbb N^*}\, ,\]
together with $\mathcal P_n^{(1)}$ and $\mathcal P_n^{(2)}$.
\begin{thm}\label{THM}
The sequence of 3-uples of processes 
\[\left(\mathcal P_n^{(2)},\mathcal P_n^{(3)},\mathcal P_n^{(1)}\right)_{n\in\mathbb N^*}=
\left(\left(X_{\lfloor nt\rfloor}/n^{3/4}\right)_t,\left(Y_{\lfloor nt\rfloor}/n^{5/8}\right)_t,\left(Z_{\lfloor nt\rfloor}/n^{1/2}\right)_t\right)_{n\in\mathbb N^*}\]
converges in distribution in $ \mathcal D([0,+\infty))^3$ to the 3-uple of processes $(\Delta,\Gamma,B)$,
where $B$ and $\Delta$ are as above and where
$\Gamma$ is given by
\[\Gamma(t):=\int_{\bb R} L^{(\Delta)}(t,x)\,dW^{(2)}(x)\, ,\]
where $W^{(2)}$ is a bilateral Brownian motion, independent of $(B,W^{(1)})$.
\end{thm}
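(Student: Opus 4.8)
The plan is to read $(Y_n)$ as a random walk in random scenery whose \emph{driving walk} is the PAPA process $(X_n)$. Writing $N_n(x):=\#\{0\le j\le n-1:\ X_j=x\}$ for the occupation times of $X$, one has the exact identity $Y_n=\sum_{x\in\mathbb Z}\xi^{(2)}_x\,N_n(x)$, with $\sum_x N_n(x)=n$. Since the scenery $(\xi^{(2)}_x)_x$ is i.i.d.\ and independent of the $\sigma$-field $\mathcal F:=\sigma\big((\eta_k)_k,(\xi^{(1)}_z)_z\big)$ that generates the whole pair of trajectories $(Z_n,X_n)$, conditionally on $\mathcal F$ the variable $Y_{\lfloor nt\rfloor}$ is a sum of independent, centered, bounded random variables. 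This reduces the theorem to a \emph{conditional} functional central limit theorem, the limiting field being $\int_{\mathbb R}L^{(\Delta)}(\cdot,x)\,dW^{(2)}(x)$ once $W^{(2)}$ is taken independent of $(B,W^{(1)})$.

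First I would establish the joint convergence of the driving data. Building on the Kesten--Spitzer construction, I would show that $(\mathcal P_n^{(1)},\mathcal P_n^{(2)})$ converges in distribution to $(B,\Delta)$ in $\mathcal D^2$ and, crucially, that the rescaled occupation field of $X$ converges to the local time of $\Delta$: for all $s,t$,
\[
\frac{1}{n^{5/4}}\sum_{x\in\mathbb Z} N_{\lfloor ns\rfloor}(x)\,N_{\lfloor nt\rfloor}(x)\ \longrightarrow\ \int_{\mathbb R} L^{(\Delta)}(s,x)\,L^{(\Delta)}(t,x)\,dx,
\]
jointly with the previous convergence. The exponent $5/4$ is forced by $X_{\lfloor nt\rfloor}\approx n^{3/4}$ together with the mass identity $\sum_x N_n(x)=n$, which suggest $N_n(x)\approx n^{1/4}\,L^{(\Delta)}(1,x/n^{3/4})$; this is exactly what produces the normalisation $n^{5/8}$ for $Y$. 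In parallel I would prove the occupation-time bounds (control of $\sum_x N_n(x)^2$, of its time increments, and of $\sup_x N_n(x)=o(n^{5/8})$) needed below.

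With these ingredients the conditional characteristic function factorises,
\[
\mathbb E\Big[e^{\,i\lambda\, Y_{\lfloor nt\rfloor}/n^{5/8}}\,\big|\,\mathcal F\Big]=\prod_{x\in\mathbb Z}\cos\!\Big(\lambda\,N_{\lfloor nt\rfloor}(x)/n^{5/8}\Big),
\]
and expanding $\log\cos u=-u^2/2+O(u^4)$ with $\sup_x N_n(x)=o(n^{5/8})$ shows that its logarithm is asymptotic to $-\tfrac{\lambda^2}{2}\,n^{-5/4}\sum_x N_{\lfloor nt\rfloor}(x)^2$, hence converges to $-\tfrac{\lambda^2}{2}\int_{\mathbb R}L^{(\Delta)}(t,x)^2\,dx$. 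This identifies the conditional limit of any finite-dimensional vector of $\mathcal P_n^{(3)}$ as the centered Gaussian law with covariance $\int_{\mathbb R}L^{(\Delta)}(s,x)L^{(\Delta)}(t,x)\,dx$, i.e.\ the conditional law of $\Gamma$ given $\Delta$. Integrating against the joint convergence of $(\mathcal P_n^{(1)},\mathcal P_n^{(2)})$ and of the occupation field, and using the independence of $W^{(2)}$ from $(B,W^{(1)})$, yields convergence of all finite-dimensional distributions of $(\mathcal P_n^{(2)},\mathcal P_n^{(3)},\mathcal P_n^{(1)})$ to those of $(\Delta,\Gamma,B)$. It remains to prove tightness of $(\mathcal P_n^{(3)})$ in the $J_1$ topology, which I would obtain from a moment estimate of the form $\mathbb E\big[|Y_{\lfloor nt\rfloor}-Y_{\lfloor ns\rfloor}|^{2m}\big]\le C\,n^{5m/4}\,(t-s)^{1+\delta}$ for suitable $m$ and $\delta>0$, itself reduced through the conditional independence to moments of $\sum_x (N_{\lfloor nt\rfloor}(x)-N_{\lfloor ns\rfloor}(x))^2$; since the limit $\Gamma$ is continuous, this gives tightness on each compact subinterval.

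The main obstacle is the occupation-time analysis of $X$ in the second step. Unlike a genuine random walk, the PAPA process $(X_n)$ is \emph{not} Markovian: its increments $\xi^{(1)}_{Z_{k-1}}$ are dictated by the position of the underlying walk $Z$ and by the frozen scenery $\xi^{(1)}$, so neither the convergence of its occupation field to $L^{(\Delta)}$ nor the requisite moment and maximal bounds follow from standard local-time theory. I expect these to demand a careful two-scale argument exploiting the representation $X_n=\sum_k\xi^{(1)}_{Z_{k-1}}$, the local-time structure of the simple random walk $Z$, and the regularity of $L^{(\Delta)}$ established in~\cite{CGPPS-AOP-2014}. Securing the joint convergence of the occupation field together with $(B,\Delta)$, with enough uniformity to pass to the limit inside the conditional characteristic functions and to close the tightness bound, is where the real work lies.
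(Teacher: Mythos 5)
Your top-level skeleton matches the paper's: read $Y$ as a random walk in the scenery $\xi^{(2)}$ driven by the (non-Markovian) process $X$, reduce everything to convergence of the occupation field of $X$ towards $L^{(\Delta)}$, deduce the conditionally Gaussian limit, and close with a second-moment tightness estimate. Your cosine-product computation is the Rademacher-specific version of what the paper does more generally: the paper proves joint convergence of the normalized local times $L_n(t_j,\cdot)$ in ${\mathbf L}^\beta(\mathbb R)$ (Proposition~\ref{plocaltime}) and then applies a general result on sums of scenery weighted by converging local times (Corollary~\ref{COR}, built on~\cite{DGP-EJP-2009}), which handles any scenery in the domain of attraction of a $\beta$-stable law rather than just Rademacher. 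Your reductions (conditional characteristic function, identification of the conditional limit law, reduction of tightness to moments of $\sum_x(N_{\lfloor nt\rfloor}(x)-N_{\lfloor ns\rfloor}(x))^2$) are correct as far as they go.

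The genuine gap is that the step you defer as ``where the real work lies'' is essentially the entire technical content of the paper, and you give no workable method for it. The paper's mechanism is to reduce the occupation-field convergence to local limit theorem estimates for the PAPA $X$: (ii) $\sup_x\mathbb P(X_n=x)=\mathcal O(n^{-3/4})$, quoted from~\cite{CGPPS-AOP-2011}; (iv) uniform two-point bounds $\sup_{a_1,a_2}\mathbb P(X_k=a_1,X_{k+i}=a_2)\le C(ki)^{-3/4}$, quoted from~\cite{Pene-EJP2021}; (v) continuity of the two-time density, from~\cite{CGPPS-AOP-2014}; and, above all, (iii) a two-time local limit theorem $n^{3/2}\,\mathbb P\bigl(X_{n_1}=\lfloor n^{3/4}x_1\rfloor,\,X_{n_1+n_2}=\lfloor n^{3/4}x_2\rfloor\bigr)\to d^2\,p_{t_1,t_1+t_2}(x_1,x_2)$ on the correct arithmetic sublattice, which is new and is proved in Proposition~\ref{PRO2} by Fourier analysis conditional on the driving walk, adapting~\cite{CGPPS-AOP-2014}. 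These estimates, fed into conditions {\bf (RW2.a)}--{\bf (RW2.b)} of~\cite{DGP-EJP-2009}, are exactly what turn your cross-product convergence $n^{-5/4}\sum_x N_{\lfloor ns\rfloor}(x)N_{\lfloor nt\rfloor}(x)\to\int_{\mathbb R} L^{(\Delta)}(s,x)L^{(\Delta)}(t,x)\,dx$ (the polarized form of ${\mathbf L}^2$-convergence of local times) into a provable statement; your suggested ``two-scale argument'' is at best a description of the inputs to that Fourier-analytic proof, not an argument. Note also that your auxiliary claims are not free of this machinery: the tightness moments and the bound $\sup_x N_n(x)=o_P(n^{5/8})$ (which your $\log\cos$ expansion needs uniformly in $x$) again require the return-probability estimate (ii) together with stationarity of the increments of $X$; and any correct two-time statement must track the parity constraint $X_j\equiv j\ (\mathrm{mod}\ 2)$ forced by the Rademacher scenery, which is why the indicators $\indic_{\{n_1a-\lfloor n^{3/4}x_1\rfloor\in d\mathbb Z\}}$ and the factor $d^2$ appear in the paper's hypothesis (iii) and in Proposition~\ref{PRO2}. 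Without these local limit theorem inputs --- none of which you mention --- the proposal is a reduction of the theorem to its hardest part, not a proof.
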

Let us notice that, whereas the PAPA $\mathcal P^{(2)}$ is more diffusive than the random walk $\mathcal P^{(1)}$, the PAPAPA $\mathcal P^{(3)}$ is less diffusive than the PAPA $\mathcal P^{(2)}$.
This will be discussed at the beginning of Section~\ref{conjppt}.

We will establish this result under more general assumptions on the underlying random walk and on the random sceneries (see Theorem~\ref{THMtriple} in Section~\ref{PAPAPA}). 

It can also be interesting to consider the
random walk $(\widetilde M_n)_n$ to the closest neighbours in $\mathbb Z^3$
with the orientations $\xi^{(1)}$ and $\xi^{(2)}$ as described at the beginning of this section.
 Given the double scenery $(\xi^{(1)}_z,\xi^{(2)}_x)$, the motion is described by a Markov chain $\left(\widetilde M_n=\left(\widetilde X_n,\widetilde Y_n,\widetilde Z_n\right)\right)_{n\in\mathbb N}$ starting from the origin and with increments having the following distribution~:  
\[\widetilde M_n-\widetilde M_{n-1}=
\left\{\begin{aligned}
&(0,0,1)\textrm{ with probability }\frac{1}{4}\\
&(0,0,-1)\textrm{ with probability }\frac{1}{4}\\
&(\xi^{(1)}_{\widetilde Z_{n-1}},0,0)\textrm{ with probability }\frac{1}{4}\\
&(0,\xi^{(2)}_{\widetilde X_{n-1}},0)\textrm{ with probability }\frac{1}{4}
\end{aligned}\right.\]

	\begin{figure}[h!] \caption{A trajectory of $\widetilde M$.}\label{GrapheWang1}
	\begin{tikzpicture}[scale=4]
	\draw (.4,.4)--(0,0) -- (3.2,0) ;	
    \draw (.2,.1) -- (3,.1)--(2.83,.05) ;
    \draw  (3,.1)--(2.83,.15) ;
    \draw (.3,.2) -- (3.1,.2)--(2.93,.15) ;
    \draw  (3.1,.2)--(2.93,.25) ;
    \draw (.4,.3) -- (3.2,.3)--(3.03,.25) ;
    \draw  (3.2,.3)--(3.03,.35) ;
    
    	\draw[thick] (.4,.85)--(0,.45) -- (3.2,.45) ;	
    \draw[thick] (.37,.5)--(.2,.55) -- (3,.55) ;
    \draw[thick]  (.2,.55)--(.37,.60) ;
    \draw[thick] (.47,.6)--(.3,.65) -- (3.1,.65) ;
    \draw[thick]  (.3,.65)--(.47,.7) ;
    \draw[thick]  (.57,.7)--(.4,.75) -- (3.2,.75) ;
    \draw[thick]  (.4,.75)--(.57,.8) ;

	\draw (.4,1.3)--(0,.9) -- (3.2,.9) ;
    \draw (.37,.95)--(.2,1) -- (3,1) ;
    \draw  (.2,1)--(.37,1.05) ;
    \draw (.47,1.05)--(.3,1.10) -- (3.1,1.10) ;
    \draw  (.3,1.10)--(.47,1.15) ;
    \draw  (.57,1.15)--(.4,1.2) -- (3.2,1.2) ;
    \draw  (.4,1.2)--(.57,1.25) ;
	
	\draw[thick] (.5,1.85)--(0,1.35) -- (3.2,1.35) ;
    \draw[thick] (.2,1.45) -- (3,1.45) --(2.83,1.4) ;
    \draw[thick]  (3,1.45) --(2.83,1.50) ;
    \draw[thick]  (.3,1.55) -- (3.1,1.55) -- (2.93,1.5) ;
    \draw[thick]  (3.1,1.55)--(2.93,1.6) ;
    \draw[thick]  (.4,1.65) -- (3.2,1.65)--(3.03,1.6) ;
    \draw[thick]  (3.2,1.65)--(3.03,1.7) ;

	\draw[red,thick] (.45,1.6)--(.45,-.2)--(1.05,.4)--(1.05,2.2) --(.45,1.6) ;
	\draw[red,thick] (.47,0.02)--(.85,.38)--(.8,.25);
	\draw[red,thick] (.85,.38)--(.67,.28);
		\draw[red,thick] (.47,0.47)--(.85,.83)--(.8,.7);
	\draw[red,thick] (.85,.83)--(.67,.73);
		\draw[red,thick] (.47,0.92)--(.85,1.27)--(.8,1.15);
	\draw[red,thick] (.85,1.27)--(.67,1.18);
		\draw[red,thick] (.47,1.37)--(.85,1.72)--(.8,1.6);
	\draw[red,thick] (.85,1.72)--(.67,1.63);
	
	\draw[red,thick] (1.35,1.6)--(1.35,-.2)--(1.95,.4)--(1.95,2.2) --(1.35,1.6) ;
	\draw[red,thick] (1.37,0.02)--(1.75,.38)--(1.7,.25);
	\draw[red,thick] (1.75,.38)--(1.57,.28);
		\draw[red,thick] (1.37,0.47)--(1.75,.83)--(1.7,.7);
	\draw[red,thick] (1.75,.83)--(1.57,.73);
		\draw[red,thick] (1.37,0.92)--(1.75,1.27)--(1.7,1.15);
	\draw[red,thick] (1.75,1.27)--(1.57,1.18);
		\draw[red,thick] (1.37,1.37)--(1.75,1.72)--(1.7,1.6);
	\draw[red,thick] (1.75,1.72)--(1.57,1.63);

	\draw[red,thick] (2.25,1.6)--(2.25,-.2)--(2.85,.4)--(2.85,2.2) --(2.25,1.6) ;
	\draw[red,thick] (2.32,.15)-- (2.27,0.02)--(2.65,.38);
	\draw[red,thick] (2.27,0.02)--(2.4,.05);
		\draw[red,thick] (2.32,.6)-- (2.27,0.47)--(2.65,.83);
	\draw[red,thick] (2.27,0.47)--(2.4,.5);
		\draw[red,thick] (2.32,1.05)-- (2.27,0.92)--(2.65,1.28);
	\draw[red,thick] (2.27,0.92)--(2.4,.95);
		\draw[red,thick] (2.32,1.55)-- (2.27,1.37)--(2.65,1.73);
	\draw[red,thick] (2.27,1.37)--(2.4,1.4);

		\filldraw[blue] (2.46,1.1) circle[radius=.8pt];
			\draw[blue,thick](2.46,1.1) -- (2.46,.65)--(2.54,.8);
		\draw[blue,thick](2.46,.65)--(2.38,.8);
			\filldraw[blue] (2.46,.65) circle[radius=.8pt];
	\draw[blue,thick](2.46,.65) -- (2.36,.55)--(2.39,.68);
	\draw[blue,thick]  (2.36,.55)--(2.48,.57);
			\filldraw[blue] (2.36,.55) circle[radius=.8pt];
\draw[blue,thick](2.36,.55) -- (1.45,.55)--(1.62,.6);
\draw[blue,thick](1.45,.55)--(1.62,.5);
			\filldraw[blue] (1.45,.55) circle[radius=.8pt];
		\draw[blue,thick](1.45,.55) -- (1.45,1)--(1.37,.85);
	\draw[blue,thick]  (1.45,1)--(1.53,.85);
			\filldraw[blue] (1.45,1) circle[radius=.8pt];
				\draw[blue,thick](1.45,1) -- (1.55,1.1)--(1.52,.97);
	\draw[blue,thick]  (1.55,1.1)--(1.43,1.08);
			\filldraw[blue] (1.55,1.1) circle[radius=.8pt];
\draw[blue,thick](1.55,1.1) -- (1.55,1.55)--(1.47,1.35);
	\draw[blue,thick]  (1.55,1.55)--(1.63,1.35);
			\filldraw[blue] (1.55,1.55) circle[radius=.8pt];
\draw[blue,thick](1.55,1.55) -- (2.46,1.55)--(2.29,1.6);
\draw[blue,thick](2.46,1.55)--(2.29,1.5);
			\filldraw[blue] (2.46,1.55) circle[radius=.8pt];

	\end{tikzpicture}
\end{figure}
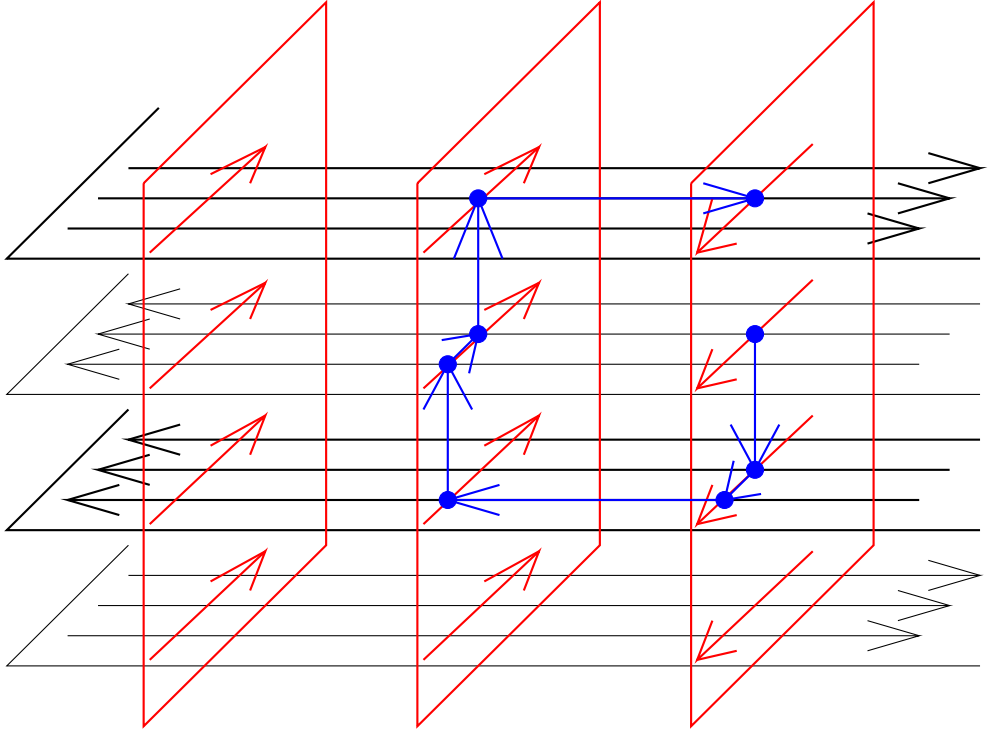

We notice that $\left(\widetilde Z_n\right)_{n\in\mathbb N}$ is a lazy symmetric random walk.
We observe that $\left(\widetilde X_n,\widetilde Z_n\right)_{n\in\mathbb N}$ corresponds to a lazy version of the two-dimensional Matheron-de Marsily model~\cite{MdM}. 
The convergence in distribution of $\left((\widetilde X_{\lfloor nt\rfloor}/n^{3/4}), (\widetilde Z_{\lfloor nt\rfloor}/n^{1/2})\right)_{n\in\mathbb N^*}$ follows\footnote{This is actually a "lazy version" of the walk considered in~\cite{GPLN1}. Let us write $(M'_n)_n$ for the process considered in~\cite{GPLN1} with $p=1/3$. Our process $\left(\widetilde Z_{n},\widetilde X_n\right)$ corresponds to $M'_{R_1+...+R_n}$ where $(R_i)_i$ is a sequence of i.i.d.
random variables with Bernoulli distribution with parameter $3/4$ and independent of the process $M'$. The announced FCLT follows from the FCLT of~\cite{GPLN1} via the classical time change argument~\cite[chapter 14]{Billingsley}.} from~\cite{GPLN1}. Furthermore a local limit theorem for $\left(\widetilde X_n,\widetilde Z_n\right)_{n\in\mathbb N}$ was proved in~\cite[Section 5]{CGPPS-AOP-2011}.

It should be possible to adapt the proof of Theorem~\ref{THM} to prove a limit theorem for $\left((\widetilde Y_{\lfloor nt\rfloor}/n^{5/8})_t\right)_{n\in\mathbb N^*}$ up to additional technicality (adaptation of the proofs of~\cite{Pene-EJP2021} with the use of~\cite[Section 5]{CGPPS-AOP-2011} applied with $p=1/4$ and $\mu_X$ the uniform distribution on $\{-1,0,1\}$).

{\bf Outline of the article.} 
Our main theorem is proved in Section~\ref{PAPAPA} (we establish therein a slightly more general result with weaker assumptions on the random walk and on the sceneries). 
The strategy of the proof consists in
using an induction argument that allows us:
\begin{itemize}
    \item[(I)] to recover the fact, established in~\cite{GPLN1}, that the convergence of $(\mathcal P_n^{(1)},\mathcal P_n^{(2)})$ follows from the convergence in distribution of $\mathcal P_n^{(1)}$ combined with
some multi-time versions of a local limit theorem for $\mathcal P_n^{(1)}$;
\item[(II)] to prove that the convergence of $(\mathcal P_n^{(1)},\mathcal P_n^{(2)},\mathcal P_n^{(3)})$ follows from the convergence in distribution of $(\mathcal P_n^{(1)},\mathcal P_n^{(2)})$ combined with
some multi-time versions of a local limit theorem for $\mathcal P_n^{(2)}$ (The required local limit estimates for
$\mathcal P_n^{(2)}$ have been established in~\cite{CGPPS-AOP-2014,Pene-EJP2021}).
\end{itemize}
To this end, we will use general results that are established in the previous Sections~\ref{joint} and \ref{secgene}.

We first state in Section~\ref{joint} a result of joint convergence in distribution for a process and its local time. This result is a joint version of a result of~\cite{DGP-EJP-2009}. Its short proof uses intermediate results obtained in the proof of~\cite{DGP-EJP-2009}. As a consequence
we obtain a corollary ensuring the joint convergence of a process together with the corresponding process in random scenery. This corollary will allow us to prove (I) and (II) above provided we are able to check some technical condition expressed in terms of the smoothness in the ${\bf L}^2$-norm of the local time of $\mathcal P_n^{(1)}$ (for case (I)) and then of $\mathcal P_n^{(2)}$ (for case (II)). 

In Section~\ref{secgene}, we use the results of Section~\ref{joint} to establish a joint limit theorem for stationary processes in random scenery, under assumptions based on (simple and double) local limit theorem estimates. These assumptions provide us a strategy to establish the technical hypotheses of the general result of Section~\ref{joint}.

In Section~\ref{PAPAPA}, we apply the results of Section~\ref{secgene} to prove a generalized version of Theorem~\ref{THM} (see Theorem~\ref{THMtriple}). We check the assumptions of the general result of
Section~\ref{secgene} using results recently established in~\cite{Pene-EJP2021} and by adapting a proof of~\cite{CGPPS-AOP-2014}.
We obtain Theorem~\ref{THM} as a consequence of this generalized theorem.

In Section~\ref{conjppt}, we discuss about the diffusivity of the successive $PA(p)$, and state a conjecture about the asymptotic behaviour of higher order $PA(p)$ for $p\ge 4$. This leads us naturally to the introduction of the expected limit objects
that correspond to continuous time processes $\Xi^{(p)}$ analogous to the discrete time processes $PA(p)$. Still in  Section~\ref{conjppt}, we study some properties of this limit processes 
(well-defined, self-similarity, stationary increments, H\"older continuous trajectories, existence of local time, existence of self-intersection local time, etc.).
In Section~\ref{othermodels}, we present other 3-dimensional models
with random orientations of lines, and discuss their relations with PAPAs or with open problems.

\section{A joint convergence result}\label{joint}
In this section, we prove that the functional argument used in~\cite{DGP-EJP-2009} to establish the convergence of random processes in random scenery can be adapted to prove joint convergence. This will be applied in the next section to prove the joint convergence stated in Theorem~\ref{THM}, i.e. of
\[\left(\mathcal P_n^{(2)},\mathcal P_n^{(3)},\mathcal P_n^{(1)}\right)_{n\in\mathbb N^*}\, ,\]
which corresponds, up to reordering this triple, to the joint convergence of the normalized PA, PAPA and PAPAPA.

To establish Theorem~\ref{THM}, we will apply the following proposition to  $A_n=\mathcal P_n^{(1)}$ and $X'_n=\mathcal P_n^{(2)}$
with $a_n=n^{3/4}$. 
In view of further generalizations of our results to iterated PAPA of any order $k$, $A_n$ can be thought as $(\mathcal P_n^{(1)},...,\mathcal P_n^{(k-2)})$ and $X'_n=\mathcal P_n^{(k-1)}$ (see Section~\ref{conjppt}). 

\begin{prop}\label{plocaltime}Let $\beta\in(1,2]$, $T>0$ and $M\in\mathbb N^*$. 
Let $(X_n)_n$ be an integer valued process and let $(A_n)_n$ be a
$(M-1)$-uple of processes in $\mathcal D([0,T])$. 
Assume that
\begin{itemize}
    \item[(a)] there exists a normalizing sequence $(a_n)_n$ diverging to infinity 
such that the family of processes $(A_n,X'_n:=X_{\lfloor n\cdot\rfloor}/a_n)_n$ converges in distribution to $(Z,\Delta)$ in $(\mathcal D([0,T]),J_1)^M$, 
where $(Z,\Delta)$ is a $M$-uple of processes in $\mathcal D([0,T])$ and where $\Delta$ admits a local time.
\item[(b)] Assumption {\bf (RW2)} of~\cite[Prop. 2.1]{DGP-EJP-2009} holds, that is for all positive real number $R$
 \[
 \lim_{h\rightarrow 0}
 \limsup_{n\rightarrow +\infty}\int_{[-R,R]}
 \mathbb E\left[\left|L_n(t,x) - L_n(t,h\lfloor x/h\rfloor)\right|^\beta\right]\, {\rm d}x = 0\, ,
 \]
 where we set
\[   L_n(t,x) := n^{-1}a_n \#\left\{k=1,...,\lfloor nt\rfloor\, :\, X_k=\lfloor a_nx\rfloor \right\}\, .
\]
\end{itemize}
Then, for every 
$t_1,...,t_m\in[0,T]$, the following convergence in distribution holds in the space $(\mathcal D([0,T]))^{M}\times ( {\mathbf L}^\beta(\mathbb R))^m$,
\begin{equation}\label{cv}
\left(A_n, X'_n, \left(L_n(t_j, \cdot)\right)_{j=1,...,m}\right) \Rightarrow \left(Z,\Delta, \left(L^{(\Delta)}(t_j, \cdot)\right)_{j=1,...,m}\right)\, ,
\end{equation}
as $n\rightarrow +\infty$.
\end{prop}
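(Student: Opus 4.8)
The plan is to upgrade the functional convergence in hypothesis (a) to a \emph{joint} convergence that also tracks the occupation measures $L_n(t_j,\cdot)$, and to identify their limits as the local times $L^{(\Delta)}(t_j,\cdot)$. First I would recall the structure underlying Proposition~2.1 of~\cite{DGP-EJP-2009}: there one shows that $L_n(t,\cdot)$, viewed as a random element of ${\mathbf L}^\beta(\mathbb R)$, converges in distribution to $L^{(\Delta)}(t,\cdot)$, and the mechanism is a tightness-plus-identification argument in ${\mathbf L}^\beta$. Assumption (b), which is exactly the equicontinuity-type condition {\bf (RW2)}, is precisely what provides tightness of the family $\big(L_n(t,\cdot)\big)_n$ in ${\mathbf L}^\beta(\mathbb R)$: it controls the ${\mathbf L}^\beta$-modulus of continuity of the occupation densities uniformly in $n$, so that any limit point is a genuine (continuous in the ${\mathbf L}^\beta$ sense) function, not merely a measure. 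Since this step is taken verbatim from~\cite{DGP-EJP-2009}, I would cite it rather than reprove it.

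The genuinely new content is the \emph{joint} statement: I must show that $(A_n,X'_n)$ and the occupation fields $(L_n(t_j,\cdot))_{j}$ converge together, with the limit being the \emph{same} realization $(Z,\Delta)$ appearing in both the process limit and the local-time limit. The clean way to do this is via the Skorokhod representation theorem. By hypothesis (a), $(A_n,X'_n)\Rightarrow(Z,\Delta)$ in $(\mathcal D([0,T]),J_1)^M$; tightness of the ${\mathbf L}^\beta$-valued fields $(L_n(t_j,\cdot))_{j=1,\dots,m}$ (from (b)) makes the full vector $\big(A_n,X'_n,(L_n(t_j,\cdot))_j\big)$ tight in $(\mathcal D([0,T]))^M\times({\mathbf L}^\beta(\mathbb R))^m$. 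So it suffices to identify every subsequential limit. Passing to a subsequence and invoking Skorokhod, I would realize the convergence almost surely on a common probability space: $A_n\to Z$, $X'_n\to\Delta$ in $J_1$, and $L_n(t_j,\cdot)\to \ell_j$ in ${\mathbf L}^\beta$ for some limit fields $\ell_j$. The task is then to prove $\ell_j=L^{(\Delta)}(t_j,\cdot)$ almost surely, which pins down the limit uniquely and forces convergence of the whole sequence.

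For that identification I would argue through the occupation-time formula. For any bounded continuous test function $f$, the defining property $\int_{\mathbb R} f(x)\,L_n(t_j,x)\,dx = n^{-1}\sum_{k=1}^{\lfloor nt_j\rfloor} f\big(\lfloor a_n X_k/a_n\rfloor\big)\approx \frac1n\sum_{k\le nt_j} f(X_k/a_n)$ (up to a negligible rounding error controlled by (b)) can be rewritten as a functional of the càdlàg path $X'_n$; along the almost-sure subsequence this path functional converges to $\int_0^{t_j} f(\Delta(s))\,ds$, the integral against the occupation measure of $\Delta$. On the other hand the left side converges, by the ${\mathbf L}^\beta$-convergence, to $\int_{\mathbb R} f(x)\,\ell_j(x)\,dx$. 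Equating the two limits for all $f$ shows $\ell_j$ is the occupation density of $\Delta$ at time $t_j$, i.e.\ $\ell_j=L^{(\Delta)}(t_j,\cdot)$, where existence of this local time is guaranteed by (a). Since the limit is deterministic given $(Z,\Delta)$ and coincides for every subsequence, the full joint convergence~\eqref{cv} follows.

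The main obstacle I anticipate is the continuity/convergence of the occupation functional $X'_n\mapsto \int_0^{t_j} f(X'_n(s))\,ds$ under the $J_1$ topology and its interplay with the discrete rounding in the definition of $L_n$: one must show that the jump-discontinuities of the $J_1$-limit do not spoil passage to the limit in the time integral, and that replacing $X_k$ by $\lfloor a_n X_k\rfloor/a_n$ (equivalently the $h$-discretization in (b)) is asymptotically harmless. Hypothesis (b) is tailored to absorb exactly this rounding error, so the delicate part is matching the $h\to0$ and $n\to\infty$ limits in the right order, as in~\cite[Prop.~2.1]{DGP-EJP-2009}; beyond that, the argument is a routine tightness-and-identification along the lines already developed there.
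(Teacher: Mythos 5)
Your proof is correct and takes essentially the same route as the paper's: tightness of the full vector from hypothesis (b) via \cite[Prop.~2.1]{DGP-EJP-2009} plus hypothesis (a), Skorokhod representation along a convergent subsequence, and identification of the subsequential ${\mathbf L}^\beta$-limits with $L^{(\Delta)}(t_j,\cdot)$ by testing against test functions and invoking the occupation-time formula, exactly as in the paper. Two minor corrections: the test functions should be continuous with \emph{compact support} (not merely bounded) so that the pairing with the ${\mathbf L}^\beta$-convergent densities passes to the limit, and the rounding error $\lfloor a_n x\rfloor/a_n$ in the Riemann-sum identification is absorbed by the uniform continuity of the test function, not by hypothesis (b), whose sole role here is the ${\mathbf L}^\beta$-tightness.
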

\begin{proof}Observe that we can express $L_n$ in terms of $X'_n$
as follows
\begin{equation}\label{defLn}
   L_n(t,x) = n^{-1}a_n \#\left\{k=1,...,\lfloor nt\rfloor\, :\, X'_n(k/n)=a_n^{-1}\lfloor a_nx\rfloor \right\}\, .
\end{equation}
The tightness of $(L_n(t_j,\cdot))_n$ seen as sequences of random variables with values in  ${\mathbf L}^\beta(\mathbb R)$ was proved in~\cite[Prop. 2.1]{DGP-EJP-2009}. This combined with Condition~(a) ensures the tightness of the left hand side of~\eqref{cv}. 
Let us consider $(Z''',\Delta''',W_1,...,W_m)$ the limit in distribution in  $\mathcal D([0,T])^{M}\times {\mathbf L}^\beta(\mathbb R)^m$ of a subsequence 
\begin{equation}\label{multi}
\left(A_{n_k}, X'_{n_k}, \left(L_{n_k}(t_j, \cdot)\right)_{j=1,...,m}\right)_k\, 
\end{equation}
of the left hand side of~\eqref{cv}.
By assumption~(a), $(Z''',\Delta''')$ has the same distribution as $(Z,\Delta)$. 
It follows from the Skorohod representation theorem that there exists, in a possibly enlarged probability space, a family of processes $(A''_{k},X''_{k},L''_{1,k},...,L''_{m,k})_k$ converging pointwise to $(Z'',\Delta'',W''_1,...,W''_m)$ in $\mathcal D([0,T])^{M}\times ({\mathbf L}^\beta(\mathbb R))^m$ such that $(Z'',\Delta'',W''_1,...,W''_m)$ has the same distribution as $(Z''',\Delta''',W_1,...,W_m)$ and such that, for every $k$,  $(A''_{k},X''_{k},L''_{1,k},...,L''_{m,k})_k$ has same distribution as~\eqref{multi}. Up to reducing if necessary the probability space, we assume without loss of generality (this event having probability 1) that, for every $k$, the process 
$X''_k$ takes its values in $\mathbb Z/a_{n_k}$ and has a set of discontinuity points contained in $\mathbb Z/n_k$
and, moreover, that
$L''_{j,k}$ coincides with
\[
L''_{j,k} (x)= (n_k)^{-1}a_{n_k} \#\left\{i=1,...,\lfloor n_kt_j\rfloor\, :\, X''_k(i/n_k)=\lfloor a_{n_k}x\rfloor/a_{n_k} \right\}=:L^{(X''_k)}(t_j,x)\, ,
\]
and that $\Delta''$ admits a continuous local time $L^{(\Delta'')}$.

As in~\cite[Proof of Prop. 2.1]{DGP-EJP-2009}, we observe that the convergence of $(X''_{k})_k$ to $\Delta''$ implies that the following convergence holds true for any $f$ in the set $D'$ of continuous functions with compact support~:
\begin{equation}\label{Wj0}
\forall j=1,...,m,\quad 
\lim_{k\rightarrow +\infty}\int_{\mathbb R}f(u)L^{(X''_{k})}(t_j,u)\, {\rm d}u=\int_{\mathbb R}f(u)L^{(\Delta'')}(t_j,u)\, {\rm d}u\, . \end{equation}

But the convergence of $(L''_{j,k})_k$ to $W''_j$ in ${\mathbf L}^\beta(\mathbb R)$ also implies that, for any $f\in D'$
\[
\lim_{k\rightarrow +\infty}\int_{\mathbb R}f(u)L^{(X''_{k})}(t_j,u)\, {\rm d}u=\lim_{k\rightarrow +\infty}\int_{\mathbb R}f(u)L''_{j,k}(u)\, {\rm d}u
=\int_{\mathbb R}f(u)W''_j(u)\, {\rm d}u\, .
\]
This combined with~\eqref{Wj0} ensures that, for any $f\in D'$, 
\[
\int_{\mathbb R}f(u)W''_j(u)\, {\rm d}u
=\int_{\mathbb R}f(u)L^{(\Delta'')}(t_j,u)\, {\rm d}u\, ,
\]
and so that
$W_j''=L^{(\Delta'')}(t_j,\cdot)$ in ${\mathbf L}^{\beta}(\mathbb R)$. 
Thus $\left(Z'',\Delta'',(W''_j)_j\right)$ 
has the same distribution as the right hand side of~\eqref{cv}. 
Therefore we have proved that every convergent subsequence (for the convergence in distribution) of the left hand side of~\eqref{cv} converges in distribution to the right hand side of~\eqref{cv}, as announced.
\end{proof}

\begin{cor}\label{COR}
Under the assumptions of Proposition~\ref{plocaltime}
Assume $(\xi_k)_{k\in\mathbb Z}$ is a sequence of i.i.d. random variables independent of $\left(A_n,X_n\right)_{n\in\mathbb N^*}$ belonging to the normal domain of attraction of a centered $\beta$-stable distribution $\cal S$, then we have the following convergence in distribution
\[
\left(A_n, X'_n, \left(n^{-1}a_n^{1-\frac 1\beta}\sum_{k=1}^{\lfloor n t_j\rfloor}\xi_{X_k}\right)_{j=1,...,m}\right) \Rightarrow \left(Z,\Delta, \left(\int_{\mathbb R}L^{(\Delta)}(t_j,x)\, {\rm d}U(x)\right)_{j=1,...,m}\right)\, 
\]
in  $\mathcal D([0,T])^{M}\times \mathbb R^m$, where $U$ is a bilateral $\beta$-stable Lévy process, independent of $(Z,\Delta)$ and such that $U(1)$ has distribution $\cal S$.
\end{cor}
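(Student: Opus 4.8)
The plan is to reformulate the normalized scenery sums as integrals of the local times $L_n(t_j,\cdot)$ against the rescaled partial-sum process of the scenery, and then to transport the joint convergence already granted by Proposition~\ref{plocaltime}, combined with the functional stable limit theorem for the scenery. First I would group the visits of $X$ by site, which gives the exact identity
\[
n^{-1}a_n^{1-\frac1\beta}\sum_{k=1}^{\lfloor nt_j\rfloor}\xi_{X_k}
=a_n^{-\frac1\beta}\sum_{y\in\mathbb Z}\xi_y\,L_n(t_j,y/a_n)
=\int_{\mathbb R}L_n(t_j,x)\,{\rm d}U_n(x)\, ,
\]
where $U_n$ is the càdlàg process with $U_n(x)=a_n^{-1/\beta}\sum_{1\le y\le\lfloor a_nx\rfloor}\xi_y$ for $x\ge0$, extended bilaterally for $x<0$, and the second equality uses that $L_n(t_j,\cdot)$ is constant on each interval $[y/a_n,(y+1)/a_n)$. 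By the classical functional limit theorem for i.i.d.\ sums in the normal domain of attraction of $\mathcal S$, one has $U_n\Rightarrow U$ in $(\mathcal D(\mathbb R),J_1)$. Since $(\xi_k)_k$ is independent of $(A_n,X_n)_n$, the process $U_n$ is independent of $\bigl(A_n,X'_n,(L_n(t_j,\cdot))_j\bigr)$, so Proposition~\ref{plocaltime} upgrades to the joint convergence
\[
\bigl(A_n,X'_n,(L_n(t_j,\cdot))_j,U_n\bigr)\ \Rightarrow\ \bigl(Z,\Delta,(L^{(\Delta)}(t_j,\cdot))_j,U\bigr)\, ,
\]
in $\mathcal D([0,T])^M\times({\mathbf L}^\beta(\mathbb R))^m\times\mathcal D(\mathbb R)$, with $U$ independent of the remaining coordinates.

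It then remains to push the integral functional $(\ell,u)\mapsto\int_{\mathbb R}\ell\,{\rm d}u$ through this convergence, i.e.\ to show $\int L_n(t_j,\cdot)\,{\rm d}U_n\Rightarrow\int L^{(\Delta)}(t_j,\cdot)\,{\rm d}U$ jointly over $j$ and with $(A_n,X'_n)$. The obstruction is that for $\beta<2$ the integrator is a genuine jump process and $\int\ell\,{\rm d}u$ is a \emph{stochastic} rather than pathwise integral, so the continuous mapping theorem does not apply directly. To bypass this I would argue by approximation at a discretization scale $h>0$: let $L_n^{(h)}(t_j,\cdot)$ be the block-average approximation of $L_n(t_j,\cdot)$ on the grid $h\mathbb Z$. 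For fixed $h$ the discretized integral $\int L_n^{(h)}(t_j,\cdot)\,{\rm d}U_n$ is a finite linear combination of block increments of $U_n$ at the fixed grid points, hence a continuous functional of $(L_n(t_j,\cdot),U_n)$ (the limit $U$ almost surely has no jump at a fixed point); together with the continuity of $L^{(\Delta)}$ this yields, as $n\to\infty$ and jointly with $(A_n,X'_n)$,
\[
\int_{\mathbb R}L_n^{(h)}(t_j,x)\,{\rm d}U_n(x)\ \Rightarrow\ \int_{\mathbb R}L^{(\Delta),(h)}(t_j,x)\,{\rm d}U(x)\, .
\]

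The hard part, and the technical heart of the argument, will be the uniform control of the discretization error $\int(L_n-L_n^{(h)})(t_j,\cdot)\,{\rm d}U_n$. Conditionally on the walk the weights are deterministic, so for any $p\in(1,\beta)$ the moment inequality for weighted sums of i.i.d.\ centered variables in the normal domain of attraction of a $\beta$-stable law gives
\[
\mathbb E\left[\left|\int_{\mathbb R}(L_n-L_n^{(h)})(t_j,x)\,{\rm d}U_n(x)\right|^p\ \Big|\ X\right]\le C\left(\int_{\mathbb R}\bigl|L_n(t_j,x)-L_n^{(h)}(t_j,x)\bigr|^\beta{\rm d}x\right)^{p/\beta}\, .
\]
Taking expectations and using Jensen (as $p/\beta<1$), the right-hand side is at most $C(\mathbb E\int|L_n-L_n^{(h)}|^\beta\,{\rm d}x)^{p/\beta}$; since block averaging is an ${\mathbf L}^\beta$-contraction, this is bounded by a constant times the quantity appearing in Assumption~(b) (after truncating $x$ to $[-R,R]$, the contribution of $|x|>R$ being negligible uniformly in $n$ by tightness of $\sup_{[0,T]}|X'_n|$), and hence tends to $0$ as $h\to0$, uniformly in large $n$. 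The same ${\mathbf L}^\beta$-scaling shows $\int(L^{(\Delta)}-L^{(\Delta),(h)})(t_j,\cdot)\,{\rm d}U\to0$ in probability as $h\to0$. A standard approximation lemma for weak convergence (letting first $n\to\infty$, then $h\to0$) then gives $\int L_n(t_j,\cdot)\,{\rm d}U_n\Rightarrow\int L^{(\Delta)}(t_j,\cdot)\,{\rm d}U$, jointly with $(A_n,X'_n)$. Finally, since $U$ is independent of $\Delta$ with $U(1)\sim\mathcal S$, the limit vector $\bigl(\int_{\mathbb R}L^{(\Delta)}(t_j,x)\,{\rm d}U(x)\bigr)_{j}$ is precisely the announced stable integral, completing the proof.
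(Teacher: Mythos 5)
Your proposal is correct in substance, but it takes a genuinely different route from the paper's proof. Both arguments begin with the same reformulation of the normalized scenery sums as weighted sums $a_n^{-1/\beta}\sum_{y}\xi_y L_n(t_j,y/a_n)$ of the local time; from there the paper applies the Skorohod representation theorem to the conclusion of Proposition~\ref{plocaltime} (turning the weak convergence of $(A_n,X'_n,(L_n(t_j,\cdot))_j)$ into almost sure convergence in $\mathcal D([0,T])^M\times({\mathbf L}^\beta(\mathbb R))^m$) and then simply invokes \cite[Proposition 3.1]{DGP-EJP-2009} --- applied conditionally on the processes, and to every scalar combination of the $L''_{j,n}$ for a Cram\'er--Wold argument --- which asserts exactly that the random measures $\mu_{(a_n^{-1})}$ evaluated at ${\mathbf L}^\beta$-convergent integrands converge in law to the corresponding stable integral. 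You instead re-prove this technical core from scratch: you append the rescaled scenery partial-sum process $U_n$ (joint convergence being free by independence), discretize the integrand by $h$-block averages --- the right choice, since grid-point evaluation is not ${\mathbf L}^\beta$-continuous whereas block averaging is both continuous and contractive on ${\mathbf L}^\beta$, so the fixed-$h$ integral passes through the continuous mapping theorem and the discretization error is controlled by Assumption~(b) --- and kill the error by a conditional moment bound. The one step you should not leave as a one-liner is precisely that bound, $\mathbb E\left[\left|\sum_y w_y\xi_y\right|^p\right]\le C\left(\sum_y|w_y|^\beta\right)^{p/\beta}$ for $p<\beta$: it is not a named textbook inequality, although it is true for centered i.i.d.\ variables in the normal domain of attraction (truncate $\xi_y$ at level $1/|w_y|$, bound the truncated part in ${\mathbf L}^2$ and the remainder via von Bahr--Esseen, using the tail bound $\mathbb P(|\xi_0|>u)\le Ku^{-\beta}$), so it needs a proof or a precise citation. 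In exchange, your route is self-contained (independent of the internals of~\cite{DGP-EJP-2009}), avoids the Skorohod representation, and yields the slightly stronger joint convergence including the integrator $U_n$; the paper's route buys brevity by outsourcing all truncation and characteristic-function technicalities to the quoted proposition.
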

\begin{proof}
It follows from Proposition~\ref{plocaltime} combined with the Skhorohod theorem that there exists a family of processes $(A''_{n},X''_{n},(L''_{j,n})_j)_n$ (defined on a same probability space) converging pointwise to $(Z'',\Delta'',(L^{(\Delta'')}(t_j,\cdot))_j)$ in $\mathcal D([0,T])^{M}\times ( {\mathbf L}^\beta(\mathbb R))^m$ such that $(Z'',\Delta'')$ has the same distribution as $(Z,\Delta)$ and such that, for every $n$,  $(A''_{n},X''_{n},(L''_{j,n})_j)_n$ has same distribution as
\[
\left(A_{n}, X'_{n}, \left(L_{n}(t_j, \cdot\right)_{j=1,...,m}\right)_n\, ,
\]
and
\[L''_{j,n}(x) = n^{-1}a_n N''_n( nt_j, a_nx)\, ,\]
with
\[
N''_n(u,y):=\#\left\{k=1,...,\lfloor u\rfloor\, :\, X''_n(k/n)=a_n^{-1}\lfloor y\rfloor\right\}\, .\]
Assume that $(\xi''_\ell)_{\ell\in\mathbb Z}$ is
a sequence of i.i.d. random variables with same distribution $\xi_1$ and 
independent of $(A''_{n},X''_{n},(L''_{j,n})_j)_n$. 
Then,
\begin{align*}
   n^{-1}a_n^{1-\frac 1\beta} \sum_{k=1}^{\lfloor n t_j\rfloor}\xi''_{a_nX''_n(k/n)}&= n^{-1}a_n^{1-\frac 1\beta}\sum_{\ell\in\mathbb Z}
       \xi''_\ell N''_n(nt_j,\ell)\\
     &=
      a_n^{-\frac 1\beta}\sum_{\ell\in\mathbb Z}
       \xi''_\ell \int_{\ell}^{\ell+1}L''_{j,n}(x/a_n)\, {\rm d}x\\
&=  
 a_n^{1-\frac 1\beta}
\sum_{\ell\in\mathbb Z}
       \xi''_\ell \int_{\ell/a_n}^{(\ell+1)/a_n}L''_{j,n}(z)\, {\rm d}z  \, .
\end{align*}

Therefore
\begin{equation}\label{TTT}
       n^{-1}a_n^{1-\frac 1\beta} \sum_{k=1}^{\lfloor n t_j\rfloor}\xi''_{a_nX''_n(k/n)}
       =
      \mu_{(a_n^{-1})}(L''_{j,n})\, ,
\end{equation}
where we write as in~\cite[(8)]{DGP-EJP-2009} (with $\gamma_{a_n^{-1}}=a_n^{-\frac 1\beta}$ as in~\cite[Section 3.1]{DGP-EJP-2009}):
\[
\mu_{(a_n^{-1})}(f)= a_n^{1-\frac 1\beta}\sum_{\ell\in\mathbb Z}\xi''_\ell
\int_{\ell/a_n}^{(\ell+1)/a_n}f(z)\, {\rm d}z\, .
\]

It follows from~\cite[Proposition 3.1]{DGP-EJP-2009} (applied to every scalar product of $\left(L''_{j,n}
\right)_{j=1,...,m}$) that, conditionally to $(A''_{n},X''_{n},(L''_{j,n})_j)_n)$, 
the sequence of random variables
\[\left(  \left(\mu_{(a_n^{-1})}(L''_{j,n})
\right)_{j=1,...,m}\right)_n
\]
converges in distribution in $\mathbb R^m$ to 
\[
\left(\int_{\mathbb R}L^{(\Delta'')}(t_j,x)\, {\rm d}U(x)\right)_{j=1,...,m}\, ,
\]
where $U$ is a bilateral $\beta$-stable Lévy process such that $U(1)$ has distribution $\cal S$. 
Due to~\eqref{TTT}, this implies that, conditionally to $(A''_{n},X''_{n},(L''_{j,n})_j)_n$, the sequence
\[
\left(A''_n ,X''_n ,\left(\mu_{(a_n^{-1})}(L''_{j,n} )
\right)_{j=1,...,m}\right)_{n\in\mathbb N^*}
\]
converges in distribution in $(\mathcal D([0,T]))^{M}\times ({\mathbf L}^\beta(\mathbb R))^m$  to
\[
\left(Z'' ,\Delta'' ,\left(\int_{\mathbb R}L^{(\Delta'' )}(t_j,x)\, {\rm d}U(x)\right)_{j=1,...,m}\right)\, .
\]
This ends the proof of the Corollary.
\end{proof}

\section{A joint convergence result using local limit theorems}\label{secgene}
In this section we consider as in the previous section 
a sequence $(A_n)_n$ of
$(M-1)$-uple of processes in $(\mathcal D([0,\infty]))^{M-1}$ converging jointly to some $(M-1)$-uple $Z$ of processes. 
We assume  furthermore that $(X_n)_{n\in\mathbb N}$ is
a $\mathbb{Z}$-valued random process 
with stationary increments. We assume that there exists
$d\in \mathbb N^{*}$ and $a\in\mathbb Z$ coprime such that
$\bb P(X_1\in a+d\mathbb Z)=1$ and that $d$ is the greatest integer satisfying this property.  
For example if $X_1$ is a Rademacher random variable as in the case of Theorem~\ref{THM}, we can take $a=1$ and $d=2$.  The forthcoming Proposition~\ref{PRO1} and Theorem~\ref{THMtriple} will allow more general situations. 
We consider also
a sequence $(\xi_y)_{y\in\mathbb Z}$  
of i.i.d. $\mathbb{R}$-valued centered random variables, independent of the processes $X_n$. 

We assume furthermore that the distribution of $\xi_0$ belongs to the normal domain of attraction of a stable distribution $\mathcal{S}_{\beta}(\sigma,\nu,0)$ with parameters 
$$\beta\in (1,2], \sigma>0 \ \mbox{and}\ \nu\in[-1,1],$$
its characteristic function being defined on $\mathbb R$ by
\begin{equation}\label{eq1.05}
 u \rightarrow \exp\left(-\sigma^\beta |u|^\beta \left(1-i\nu\tan\left(\frac{\pi\beta}{2}\right)\mbox{sgn}(u)\right)\right). 
\end{equation}
The following weak convergence holds:
\begin{equation}\label{cvl}
\left(n^{-\frac 1\beta}\sum_{k=1}^{\lfloor nt\rfloor}\xi_k\right)_{t\geq 0}
\xrightarrow[n\to\infty]{\cal L}\left( U(t)\right)_{t\geq 0}\, ,
\end{equation}
where $U$ is a $\beta$-stable Lévy process such that $U(0)=0$ and $U(1)$ has distribution $\mathcal{S}_{\beta}(\sigma,\nu,0)$.
In this setting, we are interested in the asymptotic behaviour, as $n\rightarrow +\infty$, of
$Y_n$ defined by
\[
Y_n:=\sum_{k=1}^n\xi_{X_{k-1}}\, .
\]
Our proof consists in applying the strategy of~\cite[Theorem 4.1]{DGP-EJP-2009} thanks to estimates based on the local limit 
theorem for the process $(X_n)_{n\in\mathbb N}$.\\
\begin{prop}\label{PRO1}
In addition to the previous assumptions, we assume that:
\begin{itemize}
\item[(i)] There exists $\alpha\in(0,1)$ s.t. the family of processes $(A_n,(X_{\lfloor nt\rfloor}/n^{\alpha})_t)_n$ converges in distribution in $(\mathcal D([0,T]))^M$ to a $M$-uple of processes $(Z,\Delta)$ with
$\Delta$ admitting a jointly continuous local time $L^{(\Delta)}=\left(L^{(\Delta)}(t,x)\right)_{t\geq 0, x\in\mathbb R}$.
\item[(ii)] $$
\sup_{n\in\mathbb N^*}\sup_{x\in\mathbb Z} n^\alpha\ \mathbb P(X_n=x)<\infty.$$
\item[(iii)] For any $x_1,x_2\in\bb R$ and $t_1,t_2>0$, 
\begin{multline}\label{cvxyn1}
n^{2\alpha}\bb P\left(X_{n_1}=\lfloor n^{\alpha}x_1\rfloor,X_{n_1+n_2}=\lfloor n^{\alpha}x_2\rfloor\right)\\
\sim  \indic_{\{n_1a-\lfloor n^\alpha x_1\rfloor\in d\mathbb Z\}}\indic_{\{(n_1+n_2)a-\lfloor n^\alpha x_2\rfloor\in d\mathbb Z\}}
d^2 \, p_{t_1,t_1+t_2}(x_1,x_2)\, ,
\end{multline}
as $n\to\infty$, $\frac{n_i}{n}\to t_i$
where $p_{T_1,T_2}$ is the density of the couple of random variables  $(\Delta_{T_1},\Delta_{T_2})$.
\item[(iv)] There exists $v\in(0,1-\alpha)$ and  $C>0$
 such that, for any integer $n\in\mathbb N^*$,
\[
\sup_{a_1,a_2\in\mathbb Z}\bb P(X_{k}=a_1 , X_{k+i}=a_2)\leq C(k i)^{-\alpha}
\textrm{ as soon as } k,i\in\interventier{\lfloor n^v\rfloor}{n}. 
\]
\item [(v)] for Lebesgue almost every $x\in\mathbb R$, 
$p_{s,t}$ is continuous at $(x,x)$.
\end{itemize}
Then, we have the following convergence in distribution
\[
\left(A_n,(X_{\lfloor nt\rfloor}/n^{\alpha})_t,n^{-1+\alpha(1-\frac 1\beta) }\sum_{k=1}^{\lfloor nt\rfloor}\xi_{X_{k-1}}\right)_{t\geq 0}
\Rightarrow\left( Z,\Delta,\int_{\mathbb R}L^{(\Delta)}(t,x)\, {\rm d}U(x)\right)_{t\geq 0}\, ,
\]
as $n\rightarrow +\infty$
in the space $(\mathcal D([0,+\infty]))^{M+1}$ endowed with the product of the usual $J_1$ topology on each compact interval $[0;T]$, and where $U$ is a bilateral $\beta$-stable Lévy process, independent of $\Delta$.
\end{prop}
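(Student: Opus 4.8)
The plan is to obtain the functional statement by combining the multi‑time convergence coming from Corollary~\ref{COR} with tightness of the random‑scenery component. Set $X'_n:=(X_{\lfloor n\cdot\rfloor}/n^\alpha)$, $a_n:=n^\alpha$ and $W_n(t):=n^{-1+\alpha(1-\frac1\beta)}\sum_{k=1}^{\lfloor nt\rfloor}\xi_{X_{k-1}}$; the shift from $\xi_{X_k}$ to $\xi_{X_{k-1}}$ is asymptotically harmless. Hypothesis~(a) of Proposition~\ref{plocaltime} is exactly our assumption~(i), so the only input needed to run Corollary~\ref{COR} is the ${\mathbf L}^\beta$‑smoothness condition~(b) (condition RW2). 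Once~(b) is established, Corollary~\ref{COR} gives, for every $t_1,\dots,t_m\in[0,T]$, the joint convergence of $(A_n,X'_n,(W_n(t_j))_j)$ towards $(Z,\Delta,(\int_{\mathbb R}L^{(\Delta)}(t_j,x)\,dU(x))_j)$. Together with tightness of $(W_n)_n$ in $(\mathcal D([0,T]),J_1)$ and the assumed functional convergence of $(A_n,X'_n)$, this pins down every subsequential limit and yields the claim on each $[0,T]$, hence on $[0,\infty)$ for the product topology over compacts.

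The crux is condition~(b), and this is where assumptions~(ii)--(v) are used. As $\beta\le 2$, Jensen's inequality gives $\mathbb E[|L_n(t,x)-L_n(t,y)|^\beta]\le(\Phi_n(t,x,y))^{\beta/2}$ with $\Phi_n(t,x,y):=\mathbb E[(L_n(t,x)-L_n(t,y))^2]$, so it suffices to control $\Phi_n$ uniformly in $n$. With $p=\lfloor n^\alpha x\rfloor$ and $q=\lfloor n^\alpha y\rfloor$, expanding the square writes $\Phi_n$ as $n^{-2+2\alpha}$ times a double sum over $1\le k,l\le\lfloor nt\rfloor$ of the four two‑point probabilities $\mathbb P(X_k=\cdot,X_l=\cdot)$, $\cdot\in\{p,q\}$. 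The diagonal $k=l$ contributes $O(n^{-1+\alpha}t^{1-\alpha})\to0$ by~(ii). Reindexing the off‑diagonal part by $i=|k-l|$, the pairs with $\min(k,i)<n^v$ are negligible: bounding one factor by the one‑point estimate~(ii) they contribute $O(n^{-1+\alpha+v})$, which vanishes precisely because $v<1-\alpha$. On the bulk $k,i\in\interventier{\lfloor n^v\rfloor}{n}$ the double local limit theorem~(iii) replaces the probabilities by $d^2p_{\cdot,\cdot}$ evaluated at the corresponding $(x,y)$‑arguments, while~(iv) furnishes an integrable domination; a Riemann‑sum passage then yields $\Phi_n(t,x,y)\to\Psi(t,x,y):=\mathbb E[(L^{(\Delta)}(t,x)-L^{(\Delta)}(t,y))^2]$, with $\Phi_n$ uniformly bounded on compacts.

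Condition~(b) now follows: for fixed $h$, dominated convergence (legitimated by the uniform bound of~(iv)) gives $\limsup_n\int_{[-R,R]}(\Phi_n(t,x,h\lfloor x/h\rfloor))^{\beta/2}\,dx\le\int_{[-R,R]}(\Psi(t,x,h\lfloor x/h\rfloor))^{\beta/2}\,dx$; since $\Psi(t,x,x)=0$ and, by the continuity assumption~(v) on $p_{s,t}$ at the diagonal, $\Psi(t,x,\cdot)$ is continuous at $x$ for Lebesgue‑a.e.\ $x$, letting $h\to0$ sends the right‑hand side to $0$. This verifies RW2 and unlocks Corollary~\ref{COR}, hence the finite‑dimensional convergence above.

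It remains to prove tightness of $(W_n)_n$, which I expect to be the main remaining obstacle because of the heavy tails of $\xi$. Conditionally on the walk, $W_n(t)-W_n(s)$ is a weighted sum $\sum_y c_y\xi_y$ whose weights satisfy $\sum_y|c_y|^\beta=\int_{\mathbb R}|L_n(t,x)-L_n(s,x)|^\beta\,dx$; since $\xi_0$ lies in the normal domain of attraction of a $\beta$‑stable law, for $\gamma<\beta$ one has $\mathbb E[|W_n(t)-W_n(s)|^\gamma\mid X]\asymp(\sum_y|c_y|^\beta)^{\gamma/\beta}$. Taking expectations, then using Jensen and the reduction to second moments of the previous step (controlled by~(ii) and~(iv)), gives a single‑increment bound $\mathbb E[|W_n(t)-W_n(s)|^\gamma]\le C(t-s)^{\frac\gamma\beta(\alpha+\beta(1-\alpha))}$. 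Choosing $\gamma<\beta/2$ with $\frac{2\gamma}\beta(\alpha+\beta(1-\alpha))>1$ (possible since $\alpha+\beta(1-\alpha)>1$ when $\beta>1$ and $\alpha<1$) and applying a conditional Hölder inequality to two adjacent increments produces the product form of Billingsley's tightness criterion with combined exponent $>1$; since moreover the maximal jump $n^{-1+\alpha(1-\frac1\beta)}\max_{k\le nT}|\xi_{X_{k-1}}|\to0$ in probability, the limit is continuous and $(W_n)_n$ is tight in $(\mathcal D([0,T]),J_1)$. Combining this tightness with the finite‑dimensional convergence completes the proof on each $[0,T]$, and the passage to $[0,\infty)$ is immediate.
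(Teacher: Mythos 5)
Your overall architecture is correct, and the finite-dimensional part follows essentially the paper's own route: hypothesis (a) of Proposition~\ref{plocaltime} is assumption (i), and condition (b) is verified by a Jensen reduction to second moments of local-time differences, with the diagonal and near-diagonal pairs killed by (ii) and $v<1-\alpha$, the bulk handled by the double local limit theorem (iii) with the domination (iv), and the diagonal cancellation supplied by (v). The paper packages this as conditions {\bf (RW2.a)}/{\bf (RW2.b)} of~\cite{DGP-EJP-2009}, but the content is the same. One point you gloss over: because of the indicators $\indic_{\{n_1a-\lfloor n^\alpha x_1\rfloor\in d\mathbb Z\}}$ in (iii), the two-point probabilities vanish off a sublattice, and the Riemann-sum passage requires the block-of-size-$d$ bookkeeping the paper carries out (unique congruence representative per block, possible since $a$ and $d$ are coprime). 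Your conclusion survives even if one is careless here, because all four terms $D(n,x,x),D(n,y,y),D(n,x,y),D(n,y,x)$ acquire the \emph{same} constant, so the cancellation as $y\to x$ is unaffected; but your identification of the limit with $\mathbb E[(L^{(\Delta)}(t,x)-L^{(\Delta)}(t,y))^2]$ is only correct after this bookkeeping.

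Where you genuinely diverge from the paper is the tightness step. The paper splits into $\beta=2$ (direct second moments, Billingsley (13.14)) and $\beta\in(1,2)$, where it runs a Kesten--Spitzer-type truncation of the scenery: $Y_{\lfloor nt\rfloor}=\overline Y^{(\varepsilon)}_{\lfloor nt\rfloor}+tE_n^{(\varepsilon)}+F_n^{(\varepsilon)}(t)$, with the truncated sum controlled in $\mathbf L^2$, the drift term bounded via the truncated mean, and the error term bounded using the tail $\mathbb P(|\xi_0|>u)=\mathcal O(u^{-\beta})$ together with the maximal range of $X$. You instead condition on the walk and invoke a moment inequality for weighted sums of heavy-tailed variables, $\mathbb E\bigl[|\sum_y c_y\xi_y|^{2\gamma}\bigr]\le C\bigl(\sum_y|c_y|^\beta\bigr)^{2\gamma/\beta}$ for $2\gamma<\beta$, then use the interpolation $\sum_yN_y^\beta\le(\sum_yN_y)^{2-\beta}(\sum_yN_y^2)^{\beta-1}$, Jensen, and the self-intersection bound from (ii) (your mention of (iv) here is unnecessary) to land directly on Billingsley's product criterion with exponent $\frac{2\gamma}\beta(\alpha+\beta(1-\alpha))>1$; the exponent arithmetic is right and there is indeed room for $\gamma<\beta/2$. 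This is a valid and arguably more unified argument (it covers $\beta=2$ in the same stroke). Two caveats: first, your ``$\asymp$'' should be ``$\lesssim$'' --- the lower bound is false in general and unneeded --- and the upper bound, while true under the normal-domain-of-attraction tail bound, is a nontrivial lemma you assert rather than prove; its proof is itself a truncation plus von Bahr--Esseen argument, so the truncation work the paper does explicitly is relocated into this lemma, not avoided. Second, you should state the degenerate case $t_2-t_1<1/n$, where one of the two adjacent increments vanishes identically; this is exactly why the product form of the criterion is needed, since single-increment bounds in powers of $t-s$ fail below the scale $1/n$.
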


\begin{rem}\label{notcoprime}
The condition that $a$ and $d$ are coprime can be relaxed as follows. 
If the assumptions of Proposition~\ref{PRO1} are satisfied with $X_1$ replaced
by $X'_1:=X_1/d_0$ for some positive integer $d_0$
(corresponding to the greatest integer such that $\mathbb P(X_1\in d_0\mathbb Z)=1$),
then, since $(\xi_{d_0 x})_{x\in\mathbb Z}$ has the same distribution as $(\xi_{x})_{x\in\mathbb Z}$, 
the process \[Y_n=\sum_{k=1}^n\xi_{X_{k-1}}
=\sum_{k=1}^n\xi_{d_0.X_{k-1}/d_0}
\]
has the same distribution as the process
\[
Y'_n:=\sum_{k=1}^n\xi_{X_{k-1}/d_0}\, .
\]
and so the conclusion of Proposition~\ref{PRO1} still holds true with the same process $U$ but with $\Delta$
the limit process of 
\[\left(\left(X'_{\lfloor nt\rfloor}/n^\alpha=X_{\lfloor nt\rfloor}/\left(d_0n^\alpha\right)\right)_t\right)_n\, .\]
\end{rem}
\begin{proof}[Proof of Proposition~\ref{PRO1}]
Our strategy consists in
applying Corollary~\ref{COR}
for the finite dimensional distributions of the last process, joint with the $M$ first processes, and by adapting the tightness result of~\cite{KS-1979}.\\
We start with the convergence of the finite dimensional distributions. 
By assumption, $(\xi_x)_{x\in\mathbb Z}$ satisfies the assumption of Corollary~\ref{COR}.
Assumption~(a) of Proposition~\ref{plocaltime} is ensured by Assumption~(i) of the present Proposition. 
In order to apply  Corollary~\ref{COR}, it remains to check Assumption~(b) of Proposition~\ref{plocaltime}. 
This will imply the convergence in distribution of
\begin{equation}\label{FDD}
\left(A_n,(X_{\lfloor nt\rfloor}/n^{\alpha})_t,\left(n^{-1+\alpha(1-\frac 1\beta) }\sum_{k=1}^{\lfloor nt_j\rfloor}\xi_{X_{k-1}}\right)_{j=1,...,m}\right)
\end{equation}
to
\begin{equation}\label{FDD0}
\left( Z,\Delta,\left(\int_{\mathbb R}L^{(\Delta)}(t_j,x)\, {\rm d}U(x)\right)_{j=1,...,m}\right)\, ,
\end{equation}
in $(\mathcal D([0,\infty[))^M\times\mathbb R^m$ as $n\rightarrow +\infty$. 
Due to~\cite{DGP-EJP-2009}, Assumption~(b) of Proposition~\ref{plocaltime} 
follows from Conditions~{\bf (RW2.a)} and~{\bf (RW2.b)} of~\cite[Remark after Proposition 2.1]{DGP-EJP-2009}.
To this end we will need some properties of the local time
$\mathcal N$ of the process $(X_n)_n$, which is given by
\[
\forall n\in\mathbb N^*,\quad \forall x\in\mathbb Z,\quad 
\mathcal N(n,x):=\#\left\{k=0,...,n-1\, :\, X_k=x\right\}\, ,
\]
i.e.
\begin{equation}\label{Nn}
\mathcal N(n,x):=\sum_{k=0}^{n-1}\indic_{\{X_k=x\}}\, .
\end{equation}
Taking $L_n(t,x)=n^{-(1-\alpha)} \mathcal N\left(\lfloor nt\rfloor ,\lfloor n^{\alpha}x\rfloor\right)$ in~\cite{DGP-EJP-2009}, {\bf (RW2.a)}  of~\cite{DGP-EJP-2009} can be deduced from the existence of $A>0$ such that
\begin{equation}\label{RW2a}
\forall n\in\mathbb N^*,\quad \sup_{x\in \mathbb Z} \bb E\left[\mathcal N(n,x)^2\right] \le A\, n^{2(1-\alpha)}\, .
\end{equation}
We set $Q(n,x):=\bb E\left[\mathcal N(n,x)^2\right]$, and control this quantity as follows
\begin{align*}
Q(n,x)&=\sum_{k,\ell=0}^{n-1}\bb P\left(X_k=X_\ell=x\right)
\\
&\le 2\sum_{k,m\ge 0\, :\, k+m\le n-1}\bb P\left(X_k=x,\ X_{k+m}-X_k=0\right)
\\
&\le 2n^v\sum_{k=0}^n\bb P\left(X_k=x\right)+2n^v\sum_{m=0}^n\bb P\left(X_m=0\right)+
2\sum_{k,m\ge n^v\, :\, k+m\le n-1}\bb P\left(X_k=X_{k+m}=x\right)\, ,
\end{align*}
where we used the fact that $X_{k+m}-X_k$ has the same distribution as $X_m$ since $(X_n)_n$ has stationary increments and $X_0=0$.
It thus follows from Assumptions~(ii) and~(iv)  of Proposition~\ref{PRO1} that
\begin{align*}
\sup_{x\in\mathbb Z}Q(n,x)
&\le \mathcal O\left(n^v\sum_{k=1}^n k^{-\alpha}
+\left(\sum_{k=\lfloor n^v\rfloor}^{n-1} k^{-\alpha}\right)^2\right)\\
&\le \mathcal O\left(n^v n^{1-\alpha}
+ \left(n^{1-\alpha}\right)^2\right)\\
&\le \mathcal O\left(n^{2(1-\alpha)}\right)
\, ,
\end{align*}
recalling that $v\le 1-\alpha$. 
This ends the proof of~\eqref{RW2a} and so of~{\bf (RW2.a)} of~\cite[Theorem 4.1]{DGP-EJP-2009}.\\
It remains to prove~{\bf (RW2.b)} of~\cite[Theorem 4.1]{DGP-EJP-2009}. We will prove that for all $t>0$, for every $x\in\bb R$,
\begin{equation}\label{RW2b}
\Lim{y}{x}\Lim{n}{\infty} \bb E\left[\left(L_n(t,x)-L_n(t,y)\right)^2\right]=0\, .
\end{equation}
Using the definition of $L_n$, this can be rewritten
\begin{equation}\label{RW2b1}
\forall t>0,\quad \forall x\in\mathbb R,\quad 
\Lim{y}{x}\Lim{n}{\infty} \bb E\left[n^{-2(1-\alpha)}\left(\mathcal N\left(\lfloor nt\rfloor ,\lfloor n^\alpha x\rfloor\right)-\mathcal N\left(\lfloor nt\rfloor ,\lfloor n^\alpha y\rfloor\right)\right)^2\right]=0\, .
\end{equation}
Let $t>0$ and $x\in\mathbb R$ be fixed.
For any $n\in\mathbb N^*$ and any $y\in\mathbb R$, we set
\[
E(n,x,y):=n^{-2(1-\alpha)}\bb E\left[\left(\mathcal N\left(\lfloor nt\rfloor ,\lfloor n^\alpha x\rfloor\right)-\mathcal N\left(\lfloor nt\rfloor ,\lfloor n^\alpha y\rfloor\right)\right)^2\right]\, .\]
It follows from~\eqref{Nn} that
\[
E(n,x,y)=n^{-2(1-\alpha)}\sum_{k_1,k_2=0}^{\lfloor nt\rfloor-1}\bb E\left[(\indic_{\{X_{k_1}=\lfloor n^\alpha x\rfloor\}}-\indic_{\{X_{k_1}=\lfloor n^\alpha y\rfloor\}})
(\indic_{\{X_{k_2}=\lfloor n^\alpha x\rfloor\}}-\indic_{\{X_{k_2}=\lfloor n^\alpha y\rfloor\}})\right]\, .\]
Whence
\begin{multline*}
E(n,x,y)=n^{-2(1-\alpha)}\sum_{k_1,k_2=0}^{\lfloor nt\rfloor-1} \left(
 \bb P\left(X_{k_1}=\lfloor n^\alpha x\rfloor=X_{k_2}\right)
+\bb P\left(X_{k_1}=\lfloor n^\alpha y\rfloor=X_{k_2}\right)\right.\\
-\left.\bb P\left(X_{k_1}=\lfloor n^\alpha x\rfloor,X_{k_2}=\lfloor n^\alpha y\rfloor\right)-\bb P\left(X_{k_1}=\lfloor n^\alpha y\rfloor,X_{k_2}=\lfloor n^\alpha x\rfloor\right)\right)\, . 
\end{multline*}
This can be rewritten as
\begin{equation}\label{Enxy}
E(n,x,y)= D(n,x,x)+D(n,y,y)-D(n,x,y)-D(n,y,x)\, ,
\end{equation}
setting
\begin{align}\nonumber
D(n,x_1,x_2)&:=n^{-2(1-\alpha)}\sum_{k_1,k_2=0}^{\lfloor nt\rfloor-1}\bb P\left(X_{k_1}=\lfloor n^{\alpha} x_1\rfloor,X_{k_2}=\lfloor n^{\alpha} x_2\rfloor\right)\, .
\end{align}
Let us prove that
\begin{align}\nonumber
D(n,x_1,x_2)&=o(1)+n^{-2(1-\alpha)}\sum_{(k_1,k_2)\in F_{\lfloor n t\rfloor}}
\left(\bb P\left(X_{k_1}=\lfloor n^{\alpha} x_1\rfloor,X_{k_2}=\lfloor n^{\alpha} x_2\rfloor\right)\right.\\
&\quad\quad\left.+\bb P\left(X_{k_1}=\lfloor n^{\alpha} x_2\rfloor,X_{k_2}=\lfloor n^{\alpha} x_1\rfloor\right)\right)\, ,\label{Dmab}
\end{align}
where we set
\[
F_m:=\left\{(k_1,k_2)\in\mathbb N^2\, :\, d\lfloor m^v\rfloor+1\le k_1\le k_1+ d\lfloor m^v\rfloor<  k_2\le k_1+\left\lfloor  \frac{m-k_1} {d}\right\rfloor  d,\ k_1\le d(\lfloor m/d\rfloor-1)\right\} \, .\]
This follows from the following estimates. Let $m=\lfloor nt \rfloor$.
First the case when $k_1$ or $k_2$ is smaller than $d\lfloor m^v\rfloor$ can be neglected since
\begin{align*}
\sum_{k_i\le d\lfloor m^v\rfloor, k_j=0,...,m-1 }\bb P\left(X_{k_i}=\lfloor n^{\alpha} x_i\rfloor,X_{k_j}=\lfloor n^{\alpha} x_j\rfloor\right)&\le 
2d\, m^v\sum_{k_j=0}^{m-1}\bb P\left(X_{k_j}=\lfloor n^{\alpha} x_j\rfloor\right)\\
&=\mathcal O\left(m^v\sum_{k=1}^{m-1}k^{-\alpha}\right)\\
&= o\left(m^{2(1-\alpha)}\right)\, ,
\end{align*}
using Assumption~(ii) and $v<1-\alpha$. Secondly the case when the gap
between $k_1$ and $k_2$ has length smaller than $d\lfloor m^v\rfloor$  is controlled as follows
\begin{align*}
\sum_{ k_i=0}^{m-1}\sum_{k_j=k_i}^{k_i+d\lfloor m^v\rfloor}\bb P\left(X_{k_i}=\lfloor n^{\alpha} x_i\rfloor,X_{k_j}=\lfloor n^{\alpha} x_j\rfloor\right)&\le 
d m^v\sum_{k_i=0}^{m-1}\bb P\left(X_{k_i}=\lfloor n^{\alpha} x_i\rfloor\right)\\
&=\mathcal O\left(m^v\sum_{k=0}^{m-1}k^{-\alpha}\right)= o\left(m^{2(1-\alpha)}\right)
\end{align*}
using Assumption~(ii) and $v<1-\alpha$. 

Finally, the sum over the $(k_1,k_2)$ such that $0\le k_i\le k_j$ and
such that 
$d(\lfloor m/d\rfloor-1)<k_i<m$ or $ k_i+\left\lfloor  \frac{m-k_i} { d}\right\rfloor  d <k_j<m$ is dominated by
 \[
 2d\sum_{k_j =0}^{m-1}\mathbb P\left(X_{k_j}=\lfloor n^{\alpha} x_j\rfloor\right)+
2d\sum_{k_i =0}^{m-1}\mathbb P\left(X_{k_i}=\lfloor n^{\alpha} x_i\rfloor\right)=o\left(m^{2(1-\alpha)}\right)\, .
 \]
This ends the proof of~\eqref{Dmab}, which can be rewritten as
\begin{equation}\label{Dnxy}
    D(n,x_1,x_2)=G(n,x_1,x_2)+G(n,x_2,x_1)+o(1)\, ,
\end{equation}
where
\begin{align}\nonumber
G(n,x_i,x_j):=n^{-2(1-\alpha)}\sum_{(k_1,k_2)\in F_{\lfloor n t\rfloor}}
\bb P\left(X_{k_1}=\lfloor n^{\alpha} x_i\rfloor,X_{k_2}=\lfloor n^{\alpha} x_j\rfloor\right)\, .
\end{align}
We will now use Assumption~(iii) of Proposition~\ref{PRO1}. 

The fact that $a$ and $d$ are coprime ensures that, for all  $x\in\mathbb Z$, for all $m\in\mathbb N$, there exists a unique integer $k$ such that
\[dm+1\leq k\leq d(m+1)
\textrm{ and }
ak-\lfloor n^{\alpha} x\rfloor\in d\bb Z.\]
With the change of variable $(k'_1,k'_2)=(k_1,k_2-k_1)$, the quantity  $G(n,x_i,x_j)$ can be rewritten as follows
\[
G(n,x_i,x_j)=n^{-2(1-\alpha)}\sum_{k'_1=d\lfloor \lfloor nt\rfloor^v\rfloor+1}^{d\left(\lfloor \frac {\lfloor nt\rfloor}d\rfloor-1\right)}\sum_{k'_2=d\lfloor \lfloor nt\rfloor^v\rfloor+1}^{\lfloor (\lfloor nt\rfloor-k'_1)/d\rfloor d} 
\bb P\left(X_{k'_1}=\lfloor n^{\alpha} x_i\rfloor,X_{k'_1+k'_2}=\lfloor n^{\alpha} x_j\rfloor\right)\, .
\]
Decomposing this sum in blocks of size $d$, this becomes
\begin{multline*}
G(n,x_i,x_j)=\\n^{-2(1-\alpha)}\sum_{m_1=\lfloor \lfloor nt\rfloor^v\rfloor}^{\lfloor \frac {\lfloor nt\rfloor}d\rfloor-2}\sum_{m_2=\lfloor \lfloor nt\rfloor^v\rfloor}^{\lfloor (\lfloor nt\rfloor-k_1(m_1))/d\rfloor -1}
\bb P\left(X_{k_1(m_1)}=\lfloor n^{\alpha} x_i\rfloor,X_{k_1(m_1)+k_2(m_2)}=\lfloor n^{\alpha} x_j\rfloor\right)
\, 
\end{multline*}
where $k_1(m_1)$ stands for the unique integer such that
\[dm_1+1\leq k_1(m_1)\leq d(m_1+1)
\textrm{ and }
ak_1(m_1)-\lfloor n^{\alpha} x_j\rfloor\in d\bb Z\]
and 
$k_2(m_2)$ stands for the unique integer such that
\[dm_2+1\leq k_2(m_2)\leq d(m_2+1)
\textrm{ and }
a(k_1(m_1)+k_2(m_2))-\lfloor n^{\alpha} x_j\rfloor\in d\bb Z.\]
Setting
\[F_n(u_1,u_2,x_i,x_j)=n^{2\alpha}\bb P\left(X_{k_1(\lfrf{u_1})}=\lfloor n^{\alpha} x_i\rfloor,X_{k_1(\lfrf{u_1})+k_2(\lfrf{u_2})}=\lfloor n^{\alpha} x_j\rfloor\right),\]
this can be rewritten under an integral form as follows
\begin{align*}
G(n,x_i,x_j)
&=n^{-2}\int_{\lfloor {\lfloor nt\rfloor}^v\rfloor}^{\lfloor \frac {\lfloor nt\rfloor}d\rfloor-1}
\int_{\lfloor {\lfloor nt\rfloor}^v\rfloor}
^{\lfloor\frac{\lfloor nt\rfloor-k_1(\lfloor u_1\rfloor)}{d}\rfloor}
F_n(u_1,u_2,x_i,x_j)
\, {\rm d}u_1\, {\rm d}u_2\\
&=\int_{[0, t/d]^2}\indic_{I_{n,t}}(u_1,u_2)
F_n(nu_1,nu_2,x_i,x_j)\, {\rm d}u_1\, {\rm d}u_2
\, ,
\end{align*} where $I_{n,t}$ is the set of $(u_1,u_2)\in[0,1]^2$ such that
$\lfloor \lfloor nt\rfloor^v\rfloor/n\le u_1\le (\lfloor \frac {\lfloor nt\rfloor} d\rfloor-1)/n$
and $\lfloor \lfloor nt\rfloor^v\rfloor/n\le u_2\le \lfloor (\lfloor nt\rfloor -k_1(\lfloor nu_1\rfloor))/d\rfloor/n.$
It follows from the dominated convergence theorem (the pointwise convergence comes from Assumption~(iii) of Proposition~\ref{PRO1} and the domination by $(2C/(u_1u_2)^{\alpha}$
from Assumption~(iv) applied with $\lfloor nt\rfloor$ instead of $n$)  that
\begin{align*}
\lim_{n\rightarrow +\infty}G(n,x_i,x_j)&=\int_{ u_1,u_2\ge 0\, :\,  u_1+u_2\le t/d} d^2 p_{du_1,d(u_1+u_2)}(x_i,x_j)\, {\rm d}u_1\, {\rm d}u_2\\
&=\int_{v_1,v_2\ge 0\, :\, v_1+ v_2\le t} p_{v_1,v_1+v_2}(x_i,x_j)\, {\rm d}v_1\, {\rm d}v_2\\
&=\int_{0\le t_1\le t_2\le t}p_{t_1,t_2}(x_i,x_j)\, {\rm d}t_1\, {\rm d}t_2
\, .
\end{align*}
This combined with~\eqref{Dnxy} ensures that
\[
\lim_{n\rightarrow +\infty}D(n,x_1,x_2) = \int_{0\le t_1\le t_2\le t}(p_{t_1,t_2}(x_1,x_2)+p_{t_1,t_2}(x_2,x_1))\, {\rm d}t_1\, {\rm d}t_2\, .
\]
Using now~\eqref{Enxy}, we have proved that the quantity $E(n,x,y)$ appearing in Condition~{\bf (RW2.b)} satisfies
\begin{align*}
\lim_{n\rightarrow +\infty}E(n,x,y)&=2\int_{0\le t_1\le t_2\le t}\left(p_{t_1,t_2}(x,x)+p_{t_1,t_2}(y,y)-p_{t_1,t_2}(x,y)-p_{t_1,t_2}(y,x)\right)\, {\rm d}t_1\, {\rm d}t_2\, .
\end{align*}
Let us denote $\Sigma(x,y)$ for this limit. 
It follows from Assumption~(v) that, for almost every $x$,
\[
\lim_{y\rightarrow x}p_{t_1,t_2}(x,x)+p_{t_1,t_2}(y,y)-p_{t_1,t_2}(x,y)-p_{t_1,t_2}(y,x)=0\, .
\]
Furthermore, Assumptions~(iii) and~(iv) 
ensure that
\[
\forall u_1,u_2>0,\quad \forall x,y\in\mathbb R,\quad 
p_{u_1,u_1+u_2}(x,y)\le \frac{2C}{(u_1u_2)^\alpha}\, .
\]
Hence, it follows from the dominated convergence theorem that
\[
\lim_{y\rightarrow x}\Sigma(x,y)=0\, .
\]
This ends the proof of Condition~{\bf (RW2.b)} of~\cite{DGP-EJP-2009} and so of Assumption~(b) of Proposition~\ref{plocaltime},  which ensures the
 convergence in distribution of~\eqref{FDD} to~\eqref{FDD0}.\\
It remains to prove the tightness of
\[
\left(\left(n^{-1+\alpha(1-\frac 1\beta) }\sum_{k=1}^{\lfloor nt\rfloor}\xi_{X_{k-1}}\right)_{t\geq 0}\right)_{n\in\mathbb N^*}
\, \]
in $\mathcal D$.\\
We proceed as in~\cite{KS-1979} with the adaptation provided in~\cite{FrankePeneWendler2} related to the convergence in $\mathcal D(0,T)$.
Recall that the normalization $\delta$ exponent is given by
\[
\delta=1-\alpha\left(1-\frac 1\beta\right)\, .
\]
We start by the easiest case where $\beta=2$. Then $\delta=1-\frac\alpha 2$. 
Due to the classical tightness criteria given in \cite[Theorem 13.5]{Billingsley}, since the limit process is continuous, it suffices to prove
that there exists $K>0$ such that for all $t,t_1,t_2\in[0,T],
T<\infty,$ s.t. $t_{1}\leq t\leq t_{2},$ for all $n\geq 1$,
\begin{equation}\label{pro}
\bb E\Big[|Y_{\lfloor nt\rfloor}-Y_{\lfloor nt_1\rfloor}| \cdot \ |Y_{\lfloor nt_2\rfloor}-Y_{\lfloor nt\rfloor}|\Big]\leq K
 (n|t_{2}-t_{1}|)^{2-\alpha},
\end{equation}
(see~\cite[(13.14)]{Billingsley} noticing that $2-\alpha>1$). 
Using Cauchy-Schwarz inequality, it is enough to prove that there exists $K'>0$ such that for all $n\geq 1$, for all $t_1\leq t$,
\begin{equation}\label{pro1-red}
\bb E\left[\left(Y_{\lfloor nt\rfloor}-Y_{\lfloor nt_1\rfloor}\right)^2\right]\leq K'
(\lfloor nt\rfloor-\lfloor nt_1\rfloor)^{2-\alpha}.
\end{equation}
Indeed if $t_2-t_1<n^{-1}$, then 
$\lfrf{nt_2}\in \{\lfrf{nt_1},\lfrf{nt_1}+1\}$
hence 
$\lfloor nt\rfloor=\lfloor nt_1\rfloor$
 or $\lfloor nt\rfloor=\lfloor nt_2\rfloor$ and so
\[
\bb E\Big[|Y_{\lfloor nt\rfloor}-Y_{\lfloor nt_1\rfloor}| \cdot \ |Y_{\lfloor nt_2\rfloor}-Y_{\lfloor nt\rfloor}|\Big]=0\, ,
\]
whereas if $t_2-t_1\ge n^{-1}$, 
then~\eqref{pro1-red} will imply that
\[\bb E\left[\left(Y_{\lfloor nt_j\rfloor}-Y_{\lfloor nt\rfloor}\right)^2\right]\le K'
(\lfloor nt_2\rfloor-\lfloor nt_1\rfloor)^{2-\alpha}\\
\le 
 K'
\left( 2 n(t_2- t_1)\right)^{2-\alpha}.\]
To prove \eqref{pro1-red}, we set $m=\lfrf{nt}$
and $l=\lfrf{nt_1}$. We observe that
since the $\xi$'s are independent and centered, we have
\begin{align}\nonumber
\bb E\left[\left(Y_{m}-Y_{l}\right)^2\right]
&=\mathbb E[\xi_0^2]\sum_{k,j=l}^{m-1} \bb P\left(X_{k}=X_{j} \right) \\
\nonumber&= \mathbb E[\xi_0^2]
\pare{m-l+2\sum_{k=l}^{m-2} \sum_{j=k+1}^{m-1}  \bb P\left(X_{j}-X_{k}=0 \right)}\\
&=\mathbb E[\xi_0^2] \pare{m-l+2\sum_{k=l}^{m-2} \sum_{j=1}^{m-1-k}  \bb P\left(X_{j} =0\right)}\, ,\label{BBB0}
\end{align}
since the increments of the PAPA are stationary. It follows from Assumption~(ii) that
\begin{align}
\nonumber\sum_{k=l}^{m-2} \sum_{j=1}^{m-1-k}  \bb P\left(X_{j} =0\right)
&=\mathcal O\left(\sum_{k=l}^{m-2} \sum_{j=1}^{m-1-k} j^{-\alpha}\right)\\
\nonumber&=\mathcal O\left(\sum_{k=l}^{m-2}  (m-k-1)^{1-\alpha}\right)\\
\nonumber&=\mathcal O\left(\sum_{k=1}^{m-l}  k^{1-\alpha}\right)\\
&=\mathcal O\left((m-l)^{2-\alpha}\right)\, ,\label{BBB1}
\end{align}
which ends the proof of \eqref{pro1-red}.

This ends the proof of~\eqref{pro} and so of the tightness when $\beta=2$.
\\
Assume from now on  that $\beta\in(1,2)$. 
We adapt the argument of tightness~\cite{KS-1979} which is based on the criteria given in~\cite{Billingsley}.
As in~\cite[(12.27) p. 131]{Billingsley}, for any function $G\in D([0,T],\mathbb R)$, we define
\[
w''_n\left(G,h\right):=\sup_{0\le t_1<t<t_2\le T,|t_2-t_1|<h}
\left(\min \left(n^{-\delta}\left|G(t_2)-G(t)\right|,n^{-\delta}\left|G(t)-G(t_1)\right|\right)\right)\, .
\]
Since the limit process is continuous, due to~\cite[Theorem 13.3]{Billingsley}, the tightness will follow from the fact 
\[
\forall\varepsilon>0,\ \forall \eta>0,\quad \exists h>0,\ \limsup_{n\rightarrow +\infty} 
\mathbb P\left(w''_n\left(Y_{\lfloor n\cdot\rfloor},h\right)>\eta\right)<\varepsilon \, .\]

We fix $\eta>0$ and $\varepsilon>0$. 
We will decompose $n^{-\delta}Y_{\lfloor n\cdot\rfloor}$ into a sum of three processes as follows~
\[
Y_{\lfloor nt\rfloor}=\overline Y_{\lfloor nt\rfloor}^{(\varepsilon)}+tE_n^{(\varepsilon)}
+F_{n}^{(\varepsilon)}(t)\, ,
\]
where $(F_n^{(\varepsilon)}(\cdot))_{n\geq 1}$ satisfies
\begin{equation}\label{ESTI3}
\limsup_{n\rightarrow + \infty}\mathbb P\left(\sup_{t\in[0;T]}n^{-\delta}\left|F_{n}^{(\varepsilon)}(t)\right|>\eta/4\right)<\varepsilon/2\, ,
\end{equation}
where $\left(n^{-\delta}E_n^{(\varepsilon)}\right)_{n\ge 1}$ is a sequence of uniformly bounded real numbers, and so there exists $h>0$ small enough such that
\begin{equation}\label{ESTI2}
n^{-\delta} \left|E_n^{(\varepsilon)}\right| h\le \eta/4\, ,
\end{equation}
and finally, we will prove that,
for $h$ small enough,
\begin{equation}\label{ESTI1}
\limsup_{n\rightarrow +\infty}\mathbb P\left(w_n''\left(\overline Y_{\lfloor n\cdot\rfloor}^{(\varepsilon)},h\right)>\eta/4\right)<\varepsilon/2\, .
\end{equation}
It will follow that for $h$ small enough
so that~\eqref{ESTI2} holds true, we have
\begin{equation}\label{EST}
\limsup_{n\rightarrow +\infty}\mathbb P\left(w_n''\left( Y_{\lfloor n\cdot\rfloor},h\right)>\eta\right)\le \eqref{ESTI1}+\eqref{ESTI3}<\varepsilon\, .
\end{equation}
Indeed
\begin{align}\label{EQ0}
w''_n\left(Y_{\lfloor n\cdot\rfloor},h\right)
\le w''_n\left(\overline Y^{(\varepsilon)}_{\lfloor n\cdot\rfloor},h\right)+2
\sup_{t\in[0;T]}n^{-\delta}\left|F_{n}^{(\varepsilon)}(t)\right|
+n^{-\delta} \left|E_n^{(\varepsilon)}\right| h\, .
\end{align}

This follows from the fact that
 \[
\left|Y_{\lfloor nt_j\rfloor}-Y_{\lfloor nt\rfloor}\right|
\le \left|\overline Y^{(\varepsilon)}_{\lfloor nt_j\rfloor}-\overline Y^{(\varepsilon)}_{\lfloor nt\rfloor}\right|+\left|E_n^{(\varepsilon)}(t_j-t)\right| +
\left|F_{n}^{(\varepsilon)}(t)\right|+\left|F_{n}^{(\varepsilon)}(t_j)\right|\, .
\]
Thus, due to~\eqref{EQ0}, the condition $w''_n\left(Y_{\lfloor n\cdot\rfloor},h\right)>\eta$ implies that one of the three following quantities is larger than  $\eta/4$~:
\begin{align*}
    &w_n''\left( \overline Y^{(\varepsilon)}_{\lfloor n\cdot\rfloor},h\right)\\
    &n^{-\delta} \left|E_n^{(\varepsilon)}h\right|  \\
    &\sup_{t\in[0;T]}n^{-\delta}\left|F_{n}^{(\varepsilon)}(t)\right|\, ,
\end{align*}
this implies~\eqref{EST} provided~\eqref{ESTI3} holds true.
\begin{itemize}
\item We start by proving that,
\begin{equation}\label{AAAX}
\forall\varepsilon>0,\ \exists B_\varepsilon>1,\quad 
\forall n\in\mathbb N^*,n>1/T\quad \mathbb P\left(\sup_{k=0,...,\lfloor nT\rfloor} |X_k|> B_\varepsilon \lfloor nT\rfloor^{\alpha}\right)\le\varepsilon\, .
\end{equation}
To this end we use the convergence in distribution of \[\left(\lfloor nT\rfloor^{-\alpha}\sup_{k=0,...,\lfloor nT\rfloor} |X_k|=\lfloor nT\rfloor^{-\alpha}\sup_{s\in[0,T]} |X_{\lfloor ns\rfloor}|\right)_{n\in\mathbb N^*}\] 
to $\sup_{s\in [0,T]}|\Delta_s|$, which implies the tightness of $\left(n^{-\alpha}\sup_{k=0,...,\lfloor nT\rfloor} |X_k|\right)_{n\in\mathbb N^*}$ and so~\eqref{AAAX}.
\item For any $ \varepsilon>0 $, we consider $B_{\varepsilon/4}>0$ satisfying~\eqref{AAAX}
where we replace $\varepsilon$ by $\varepsilon/4$
and we consider $ \rho_\varepsilon>0 $ such that for all $ n\in\mathbb{N}^* $ one has
\begin{eqnarray} \label{rho}
3B_{\varepsilon/4}\lfloor nT\rfloor^{\alpha}\mathbb P\left(|\xi_0|>\rho_\varepsilon  n^{\frac{\alpha}{\beta}}\right)
    <\frac{\varepsilon}{4}.
\end{eqnarray}
The existence of $\rho_\varepsilon$ comes from the fact that there exists some constant $C>0$ such that
\begin{equation}\label{asymp}
    \mathbb P(|\xi_0|>u)\sim C u^{-\beta}\quad \mbox{ as }u\rightarrow +\infty\, .
\end{equation}
This estimate  follows from the fact that the distribution of $\xi_0$ is in the 
normal domain of attraction of a $\beta$-stable distribution (see~\cite{GnedenkoKolmogorov}).
This implies the existence of a constant $K>0$ such that
\[
\forall u>0,\quad 
\mathbb P(|\xi_0|>u)\le K u^{-\beta}\, ,
\]
and so, for all $\rho>0$,
\begin{align*}
3B_{\varepsilon/4}\lfloor nT\rfloor^{\alpha}\mathbb P\left(|\xi_0|>\rho n^{\frac{\alpha}{\beta}}\right)
&\le    3B_{\varepsilon/4}\lfloor nT\rfloor^{\alpha} K\rho^{-\beta}n^{-\alpha}\\
&\le 3B_{\varepsilon/4} T^{\alpha} K\rho^{-\beta}\, ,
\end{align*}
and the above quantity goes to 0, uniformly in $n\in\mathbb N^*$, as $\rho\rightarrow 0+$.
\item Truncation. We define 
\[
  \bar{\xi}_x^{(\varepsilon)}:=\xi_x\mathbf{1}_{\left\{| \xi_x|\leq \rho_\varepsilon n^{\frac{\alpha}{\beta}}\right\}}\, .
\]
Since $\xi$ is centered, 
\begin{align*}
\left| \mathbb{E}\left[\bar \xi_x^{(\varepsilon)}\right]\right|&
= \left| \mathbb{E}\left[\bar\xi_x^{(\varepsilon)}-\xi_x\right]\right|= \left| \mathbb{E}\left[\xi_x\mathbf 1_{\{|\xi_x|>\rho_\varepsilon  n^{\frac\alpha\beta}\}}\right]\right| \\
&\le  \mathbb{E}\left[|\xi_x|\mathbf 1_{\{|\xi_x|>\rho_\varepsilon  n^{\frac\alpha\beta}\}}\right] \\
&\le \int_0^{+\infty}\mathbb P\left( |\xi_x|\mathbf 1_{\{|\xi_x|>\rho_\varepsilon  n^{\frac\alpha\beta}\}}>u\right)\, {\rm d}u\, .
\end{align*}
Hence, using again~\eqref{asymp}, we obtain that
\begin{align}\nonumber
\left| \mathbb{E}\left[\bar \xi_x^{(\varepsilon)}\right]\right|
&\le \int_0^{\rho_\varepsilon n^{\frac\alpha\beta}}\mathbb P\left( |\xi_x|>\rho_\varepsilon n^{\frac\alpha\beta}\right)\, {\rm d}u+
\int_{\rho_\varepsilon n^{\frac\alpha\beta}}^{+\infty}\mathbb P\left( |\xi_x|>u\right)\, {\rm d}u\\
&=\mathcal O\left( n^{(1-\beta)\frac{\alpha}{\beta}}\right)\, . \label{BBB3}
\end{align}
We set
\[  \overline Y_n^{(\varepsilon)}:= \sum_{k=0}^{n-1}\left(\overline \xi_{X_k}^{(\varepsilon)}-\mathbb E\left[\overline \xi^{(\varepsilon)}_0\right]\right)\,  .\]
Let us prove estimate~\eqref{ESTI1}.
Due to~\cite[Formula (13.14)]{Billingsley} the proof of \cite[Theorem 13.5]{Billingsley} (see the last centered formula of this proof in \cite[p.143]{Billingsley}), 
this estimate will follow from  
the existence of $K'_\varepsilon>0$ such that, for every $0<t_1<t<t_2<T$ and every $n\in\mathbb N^*$,
\begin{equation}\label{AAA000}
   \mathbb{E}\left[\left|\overline{Y}^{(\varepsilon)}_{\lfloor nt\rfloor}-\overline{Y}^{(\varepsilon)}_{\lfloor nt_1\rfloor}\right|\, \left|\overline{Y}^{(\varepsilon)}_{\lfloor nt\rfloor}-\overline{Y}^{(\varepsilon)}_{\lfloor nt_2\rfloor}\right|\right] 
   \leq  K'_\varepsilon
n^{2\delta}\left(t_2- t_1\right)^{2-\alpha}\, .
\end{equation}
Again, as in the case $\beta=2$, we observe that the left hand side of~\eqref{AAA000} is null if $t_2-t_1<1/n$ (because $\lfloor nt\rfloor$ is equal to $\lfloor nt_1\rfloor$ or to $\lfloor nt_2\rfloor$).\\
It remains to treat the case where $t_2-t_1\geq 1/n$.
Recall that in this case 
\[\left|\lfloor nt\rfloor-\lfloor nt_j\rfloor\right|\le \lfloor nt_2\rfloor-\lfloor nt_1\rfloor\le 2n(t_2-t_1)\, .
\] 
Therefore,~\eqref{AAA000} will follow via the Cauchy-Schwarz inequality from the fact that there exists $K''_\varepsilon>0$ such that, for any $0<t_1<t<t_2<T$ and $n\in\mathbb N^*$, we have
\begin{align}\label{AAA00}
   \mathbb{E}\left[\left(\overline{Y}^{(\varepsilon)}_{\lfloor nt\rfloor}-\overline{Y}^{(\varepsilon)}_{\lfloor nt_j\rfloor}\right)^2\right] 
   &\leq  K''_\varepsilon
n^{2\frac\alpha\beta-\alpha}\left|\lfloor nt\rfloor-\lfloor nt_j\rfloor\right|^{2-\alpha}\\
&\leq  K''_\varepsilon
n^{2\delta}\left(2(t_2- t_1)\right)^{2-\alpha}
\, ,\label{AAA00b}
\end{align}
where we used the fact that
\[
2\delta=2-2\alpha\left(1-\frac 1\beta\right)
=2\frac\alpha\beta-\alpha+2-\alpha \, .
\]
Applying again~\eqref{BBB0},~\eqref{BBB1} with $m=\lfloor nt\rfloor$ and with $l=\lfloor nt_j\rfloor$, 
we obtain the existence of $K'''>0$ such that, for every $0<t_1<t<t_2<T$ and $n\in\mathbb N^*$, we have
\begin{equation}\label{KKK}
\mathbb{E}\left[\left(\overline{Y}^{(\varepsilon)}_{\lfloor nt\rfloor}-\overline{Y}^{(\varepsilon)}_{\lfloor nt_j\rfloor}\right)^2\right]\le 
K''' \mathbb E\left[\left(\overline\xi^{(\varepsilon)}_0-\mathbb E[\overline\xi^{(\varepsilon)}_0]\right)^2\right]\left|\lfloor nt\rfloor-\lfloor nt_j\rfloor\right|^{2-\alpha}\, .
\end{equation}
Furthermore
\begin{align}\nonumber
    \mathbb E\left[\left(\overline\xi^{(\varepsilon)}_0-\mathbb E[\overline\xi^{(\varepsilon)}_0]\right)^2\right]&\le\mathbb E\left[(\overline\xi^{(\varepsilon)}_0)^2\right]=
    \mathbb E\left[\xi_0^2\indic_{\{| \xi_0|\leq \rho_\varepsilon n^{\frac{\alpha}{\beta}}\}}\right]\\
    \nonumber&=\int_0^{+\infty}\mathbb P\left(\xi_0^2 \indic_{\{| \xi_0|\leq \rho_\varepsilon n^{\frac{\alpha}{\beta}}\}}>u\right)\, {\rm d}u\\
    \nonumber&\leq \int_0^{\rho_\varepsilon^2 n^{\frac{2\alpha}{\beta}}}\mathbb P\left(\xi_0^2>u\right)\, {\rm d}u\\
    &=\mathcal O\left( n^{2\frac\alpha\beta -\alpha} \right),\, \label{KKK1}
\end{align}
using (\ref{asymp}).
Combining the two estimates~\eqref{KKK} and~\eqref{KKK1}, we obtain \eqref{AAA00} and~\eqref{AAA00b}. 
Therefore we have proved
the third key estimate~\eqref{ESTI1}.
\item Linear part. 
It follows from~\eqref{BBB3} that 
$$ E_n^{(\varepsilon)}:=\mathbb{E}\left[\sum_{k=0}^{n-1}\overline{\xi}_{X_k}^{(\varepsilon)}\right]=n\mathbb E[\overline \xi^{(\varepsilon)}_0]=\mathcal O\left(n^{1+(1-\beta)\frac\alpha\beta}\right)=\mathcal O(n^{\delta})\, ,$$
since $\delta=1-\alpha\left(1-\frac 1\beta\right)=1+(1-\beta)\frac{\alpha}\beta$. 
This implies the second key estimate~\eqref{ESTI2}.
\item Error term. 
we have
\[
Y_{\lfloor nt\rfloor}-\overline Y^{(\varepsilon)}_{\lfloor nt\rfloor}-tE_n^{(\varepsilon)} =\left(nt-\lfloor nt\rfloor\right)\mathbb E[\overline \xi^{(\varepsilon)}_0]+ \sum_{k=0}^{\lfloor nt\rfloor-1}\left(\xi_{X_k}-\overline \xi^{(\varepsilon)}_{X_k}\right)\, .
\]
It follows from~\eqref{BBB3} that
 $ \mathbb{E}[\overline\xi^{(\varepsilon)}_0]=\mathcal O\left(n^{(1-\beta)\frac{\alpha}{\beta}}\right)=\mathcal O\left(n^{\delta-1}\right) $ and so that
$$
\sup_{t\in[0,T]}
n^{-\delta}
\left(nt-\lfloor nt\rfloor\right) |\mathbb E[\overline \xi^{(\varepsilon)}_0] |     \le n^{-\delta} |\mathbb E[\overline \xi^{(\varepsilon)}_0]|=\mathcal O\left(n^{-1}\right).$$
For $n$ large enough, $$
\sup_{t\in[0,T]}
n^{-\delta}
\left(nt-\lfloor nt\rfloor\right) |\mathbb E[\overline \xi^{(\varepsilon)}_0] |     \le \eta/4$$
and so
\begin{align*}
 & \limsup_{n\rightarrow\infty}\mathbb{P}\left(\sup_{0\leq t\leq T}n^{-\delta}\left|Y_{\lfloor nt\rfloor}-\overline Y^{(\varepsilon)}_{\lfloor nt\rfloor}-t E_n^{(\varepsilon)}  \right|>\frac{\eta}{2}\right) \\
   & \leq   \limsup_{n\rightarrow\infty}\mathbb{P}\left(
     \sup_{0\leq t\leq T}
     n^{-\delta}\left|\sum_{k=0}^{\lfloor nt\rfloor-1}(\xi_{X_k}-\overline \xi^{(\varepsilon)}_{X_k}) \right| >\frac{\eta}{4}\right)\\
&\leq 
 \limsup_{n\rightarrow\infty}\mathbb{P}\left(\exists k=0,...,\lfloor nT\rfloor-1\, :\, \xi_{X_k}\ne \overline\xi^{(\varepsilon)}_{X_k} \right)  \\
 &\leq 
 \limsup_{n\rightarrow\infty}\mathbb{P}\left(\exists k=0,...,\lfloor nT\rfloor-1\, :\, |\xi_{X_k}|>\rho_\varepsilon n^{\frac\alpha\beta} \right) \, .
 \end{align*}
Therefore, this quantity is dominated by
  \begin{align*}
   &\leq  
  \limsup_{n\rightarrow\infty} \mathbb{P}\left(\exists x\in\mathbb{Z}: |x|\leq B_{\varepsilon/4} \lfloor nT\rfloor^{\alpha},\, 
  |\xi_{x}|>\rho_\varepsilon n^{\frac\alpha\beta}\right) \\
 &+  \ \limsup_{n\rightarrow\infty} \mathbb{P}\left(
      \sup_{k=0,...,\lfloor nT\rfloor-1} |X_k|> B_{\varepsilon/4} \lfloor nT\rfloor^{\alpha}\right) \\
  &\leq   \limsup_{n\rightarrow\infty} \ 3B_{\varepsilon/4}\lfloor nT\rfloor^{\alpha}\mathbb{P}\left(|\xi_{0}|>\rho_\varepsilon n^{\frac\alpha\beta}\right) 
          + \frac{\varepsilon}{4}\\
&\leq \frac{\varepsilon}{2}\, ,
\end{align*}
for every integer $n>1/T$,
where we used the definition of $B_{\varepsilon/4}$ in~\eqref{AAAX} and the property~\eqref{rho}.
This ends the proof of the first key estimate~\eqref{ESTI3} and so of the tightness in the case $\beta\in(1,2)$.
\end{itemize}
\end{proof}

\section{Joint Convergence of the PAPAPA together with the PA and the PAPA}\label{PAPAPA}
We consider here the PAPAPA in a more general context than the one presented at the beginning of the manuscript.
Let $(\eta_k)_{k\in\mathbb N^*}$, $(\xi^{(1)}_z)_{z\in\mathbb Z}$ and  $(\xi^{(2)}_x)_{x\in\mathbb Z}$ be three independent sequences of independent identically distributed random variables
satisfying the following assumptions
\begin{itemize}
\item $\eta_1$ is $\mathbb Z$-valued, centered and admit moments of any order,
\item $\xi^{(1)}_0$ is $\mathbb Z$-valued, centered and square integrable,
\item the distribution of $\xi^{(2)}_0$ belongs to the normal domain of attraction of the stable distribution $\mathcal{S}_{\beta}(\sigma,\nu,0)$ with characteristic function given in (\ref{eq1.05}).
\end{itemize}
Let $(Z_n)_{n\in\mathbb N}$ be the random walk with 
steps $(\eta_k)_{k\in\mathbb N^*}$, i.e.
\[
Z_n:=\sum_{k=1}^n\eta_k\, ,
\]
with the usual convention $Z_0:=0$.
Let $(X_n)_{n\in\mathbb N}$ be the PAPA given by
\[
X_n:=\sum_{k=1}^n\xi^{(1)}_{Z_{k-1}}\, .
\]
Let $d_0$ be the greatest common divisor of the support of the distribution of 
$\eta_1$. As noticed in Remark~\ref{notcoprime}, the process $X_n$
can be rewritten
\[
X_n=\sum_{k=1}^n\xi^{(1)}_{d_0Z_{k-1}/d_0}\, .
\]
So $X$ can be seen as the PAPA with steps  $\left(\eta_k/d_0\right)_{k\in\mathbb N^*}$  and with scenery $\left(\xi^{(1)}_{d_0z}\right)_{z\in\mathbb Z}$.
Therefore it follows from Proposition~\ref{PRO1} 
applied with $M=1$ and with $X'=Z_{\lfloor n\cdot\rfloor}/(\sqrt{n}d_0)$,
$\xi=\xi^{(1)}_{d_0\cdot}$  
that
\begin{equation}\label{KS}
\left((Z_{\lfloor nt\rfloor}/(\sqrt{n}d_0))_{t\ge 0},n^{-\frac 34 }X_{\lfloor nt\rfloor}\right)_{t\ge 0}
\xrightarrow[n\to\infty]{\cal L}\left(B, \int_{\mathbb R}L^B(t,x)\, {\rm d}W(x)\right)_{t\ge 0}\, ,
\end{equation} where $L^B$ is the local time of a real Brownian motion $B$ and $W$ a bilateral Brownian motion, independent of $B$. The processes $B$ and $W$ have respective variances $\mathbb E[\eta_1^2]/d_0^2$ and $\mathbb E[(\xi_0^{(1)})^2]$. 

The existence and continuity of the local time of the process $\Delta$ was proved in~\cite{CGPPS-AOP-2014} and will be denoted $L^{\Delta}.$
Let $(Y_n)_{n\in\mathbb N}$ be the PAPAPA given by
\[
Y_n:=\sum_{k=1}^n\xi^{(2)}_{X_{k-1}}\, .
\]
Let $d_1$ be the greatest common divisor of the support of the distribution of $\xi_0^{(1)}$.
Again, $Y_n$ can be rewritten as
\[
Y_n:=\sum_{k=1}^n\xi^{(2)}_{d_1X_{k-1}/d_1}\, ,
\]
as the PAPAPA with increments $\left(\eta_k/d_0\right)_{k\in\mathbb N^*}$ and with successive sceneries $\left(\xi^{(1)}_{d_0z}/d_1\right)_{z\in\mathbb Z}$ and $\left(\xi^{(2)}_{d_1x}\right)_{x\in\mathbb Z}$. 
\begin{thm}\label{THMtriple}
The family of processes 
\[
\left(\left(n^{-\frac 34}  X_{\lfloor nt\rfloor}\right)_t,\left(n^{-\frac{(\beta+3)}{4\beta}}  Y_{\lfloor nt\rfloor}\right)_t, \left(n^{-\frac 12}  Z_{\lfloor nt\rfloor}\right)_t\right)_{n\in\mathbb N^*}
\]
converges in distribution to  the joint process 
$\left(d_1\Delta,\Gamma, d_0 B\right)$
given by
\[\Gamma(t)=\int_{\bb R} L^\Delta(t,x)\,dU(x)\, ,\]
recalling that
\[
\Delta(t)=\int_{\bb R} L^B(t,x)\,dW^{(1)}(x)\, ,
\]
where $B$, $W^{(1)}$ are two independent Brownian motions with respective variance $Var(\eta_1/d_0)$, $Var(\xi_0^{(1)}/d_1)$, $L^B$ and $L^\Delta$ are the respective local times of $B$ and $\Delta$, and $U$ is a bilateral $\beta$-stable Lévy process independent of $B$ and $W^{(1)}$.
\end{thm}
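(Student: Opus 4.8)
The plan is to deduce Theorem~\ref{THMtriple} from a second application of Proposition~\ref{PRO1}, this time taking the PAPA itself as the underlying walk and $\xi^{(2)}$ as the scenery. Concretely, I would invoke Proposition~\ref{PRO1} with $M=2$, with the $1$-uple $A_n=\pare{Z_{\lfloor n\cdot\rfloor}/(\sqrt n\,d_0)}$, with the underlying $\bb Z$-valued process taken to be the PAPA $X$, with exponent $\alpha=3/4$, and with the scenery $(\xi^{(2)}_y)_{y}$, whose law lies in the normal domain of attraction of $\mathcal S_\beta(\sigma,\nu,0)$. The normalization produced by Proposition~\ref{PRO1} is then $n^{-1+\alpha(1-1/\beta)}$, and since $1-\frac34\pare{1-\frac1\beta}=\frac14+\frac{3}{4\beta}=\frac{\beta+3}{4\beta}$, this is exactly the exponent appearing in the statement. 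Because the increments of the PAPA belong to $d_1\bb Z$, the coprimality assumption of Section~\ref{secgene} is not satisfied, so I would first apply Remark~\ref{notcoprime} with $X'_n:=X_n/d_1$: the Remark replaces the limit by the genuine Kesten--Spitzer process $\Delta$ (producing the factor $d_1$ in front of $\Delta$), while the scenery sum $\sum_{k}\xi^{(2)}_{X_{k-1}}=\sum_k\xi^{(2)}_{d_1X'_{k-1}}$ keeps the same law since $(\xi^{(2)}_{d_1x})_x$ is distributed as $(\xi^{(2)}_x)_x$.

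It then remains to verify hypotheses (i)--(v) of Proposition~\ref{PRO1} for $X'=X/d_1$. The process $X$ has stationary increments: writing $X_{n+k}-X_n=\sum_{j=n+1}^{n+k}\xi^{(1)}_{Z_{j-1}}$ and using both that $(Z_{n+i}-Z_n)_i$ has the law of $(Z_i)_i$ and that the i.i.d.\ scenery $\xi^{(1)}$ is translation invariant, the law of the increments does not depend on $n$. Hypothesis (i), with $\alpha=3/4$, is exactly the joint convergence~\eqref{KS} of $\pare{Z_{\lfloor n\cdot\rfloor}/(\sqrt n\,d_0),\,n^{-3/4}X_{\lfloor n\cdot\rfloor}}$ towards $(B,d_1\Delta)$, rescaled by $d_1$ in the second coordinate, together with the existence and joint continuity of $L^{\Delta}$ established in~\cite{CGPPS-AOP-2014}. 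Hypothesis (v), the continuity of $p_{s,t}$ at the diagonal for almost every $x$, is a regularity property of the joint density of $(\Delta_s,\Delta_t)$ and likewise follows from the analysis of the Kesten--Spitzer process in~\cite{CGPPS-AOP-2014}.

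The core of the argument lies in the local limit estimates (ii)--(iv) for the PAPA. Hypothesis (ii), the one-time bound $\sup_n\sup_{x}n^{3/4}\bb P(X_n=x)<\infty$, is the local limit upper bound for the random walk in random scenery, available from~\cite[Section 5]{CGPPS-AOP-2011} and~\cite{Pene-EJP2021}. The decisive points are the two-time local limit theorem (iii), asserting that $n^{3/2}\bb P\pare{X_{n_1}=\lfloor n^{3/4}x_1\rfloor,\,X_{n_1+n_2}=\lfloor n^{3/4}x_2\rfloor}$ is asymptotic to $d^2\,p_{t_1,t_1+t_2}(x_1,x_2)$ on the correct lattice, and the uniform decoupling bound (iv), $\sup_{a_1,a_2}\bb P(X_k=a_1,X_{k+i}=a_2)\le C(ki)^{-3/4}$ for $k,i\in\interventier{\lfloor n^v\rfloor}{n}$ with $v\in(0,1/4)$. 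These are exactly the multi-time local limit statements for the Kesten--Spitzer walk, and I would obtain them from~\cite{Pene-EJP2021} together with an adaptation of the local limit analysis of~\cite{CGPPS-AOP-2014}. I expect this to be the main obstacle: unlike a genuine random walk, the PAPA is itself a random walk in random scenery, so its two-point local limit behaviour is governed by the self-intersection structure of the walk $Z$ and by the correlations between the scenery contributions at the two times $n_1$ and $n_1+n_2$; controlling these uniformly, and identifying the limit with the density of $(\Delta_{t_1},\Delta_{t_1+t_2})$, is the delicate part.

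Once (i)--(v) are in place, Proposition~\ref{PRO1} applied to $X'=X/d_1$ (through Remark~\ref{notcoprime}) yields the joint convergence in $(\mathcal D([0,+\infty[))^3$ of
\[
\pare{\pare{Z_{\lfloor nt\rfloor}/(\sqrt n\,d_0)}_t,\ \pare{n^{-3/4}X_{\lfloor nt\rfloor}}_t,\ \pare{n^{-\frac{\beta+3}{4\beta}}Y_{\lfloor nt\rfloor}}_t}
\]
towards $\pare{B,\,d_1\Delta,\,\Gamma}$, where $\Gamma(t)=\int_{\bb R}L^{\Delta}(t,x)\,dU(x)$ and $U$ is a bilateral $\beta$-stable Lévy process; the independence of $U$ from $(B,W^{(1)})$ comes from the independence of $\xi^{(2)}$ from $(\eta,\xi^{(1)})$, which enters Corollary~\ref{COR} through the scenery sequence. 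Finally, reordering the three coordinates into $(X,Y,Z)$ and using $Z_{\lfloor nt\rfloor}/\sqrt n=d_0\cdot Z_{\lfloor nt\rfloor}/(\sqrt n\,d_0)\Rightarrow d_0B$ gives precisely the statement of Theorem~\ref{THMtriple}.
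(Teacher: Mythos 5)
Your reduction coincides with the paper's proof: Proposition~\ref{PRO1} applied with $M=2$, $A_n=Z_{\lfloor n\cdot\rfloor}/(\sqrt{n}\,d_0)$, $\alpha=3/4$, the process $X/d_1$ (through Remark~\ref{notcoprime}) and the scenery $(\xi^{(2)}_{d_1x})_{x\in\mathbb Z}$, with (i) given by~\eqref{KS}, (ii) by~\cite{CGPPS-AOP-2011}, (iv) by the second part of~\cite[Proposition 27]{Pene-EJP2021}, (v) by~\cite[Corollary 2]{CGPPS-AOP-2014}, together with the same exponent computation $1-\frac34(1-\frac1\beta)=\frac{\beta+3}{4\beta}$ and the same bookkeeping of $d_0$, $d_1$ and of the independence of $U$ from $(B,W^{(1)})$.

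The gap is hypothesis (iii). You present the two-time local limit theorem for the PAPA at the points $\lfloor n^{3/4}x_i\rfloor$ as something to be obtained from~\cite{Pene-EJP2021} together with an unspecified adaptation of~\cite{CGPPS-AOP-2014}, but no such statement is available in either reference: \cite[Theorem 5]{CGPPS-AOP-2014} treats only the case $x_1=x_2=0$, and \cite[Proposition 27]{Pene-EJP2021} supplies the uniform upper bound (iv), not a two-point asymptotic at moving lattice points. This is precisely why the paper proves Proposition~\ref{PRO2}, which constitutes the bulk of Section~\ref{PAPAPA}. The adaptation is not a formality: one has to (1) exploit the lattice structure of the scenery, via the relation $\varphi_1\left(\frac{2k\pi}{d}+u\right)=\lambda^k\varphi_1(u)$ with $\lambda=e^{2i\pi a/d}$ and the identities $\sum_z N^{(i)}_{n_i}(z)=n_i$, to fold the Fourier inversion integral onto $[-\pi/d,\pi/d]^2$ — this is what produces the arithmetic factor $d^2\indic_{d\mathbb Z^2}(a(n_1,n_2)-a_n(x))$ in~\eqref{cvxyn2}; and (2) control the oscillating factor $e^{-i\langle\theta,a_n(x)\rangle}$, absent from the known case $x_1=x_2=0$, through the random change of variables $u=A_{n_1,n_2}^{1/2}\theta$, the convergence $n^{-3/2}A_{n_1,n_2}\to\tilde M_{t_1,t_2}$, the uniform integrability supplied by~\cite[Lemma 21]{CGPPS-AOP-2014}, and a final Gaussian Fourier inversion identifying the limit with $4\pi^2 p_{T_1,T_2}(x_1,x_2)$. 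You correctly flag this as the delicate point, but in your write-up the central estimate of the whole proof is assumed rather than established, so the argument is incomplete exactly where the real work lies.
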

Observe that $(B,d_1\Delta)$ corresponds to the limit appearing on the right hand side of~\eqref{KS}
up to taking $W^{(1)}=W/d_1$, indeed
\[
\int_{\mathbb R}L^B(t,x)\, {\rm d}W(x)=d_1\int_{\mathbb R}L^B(t,x)\, {\rm d}W^{(1)}(x)=d_1\Delta\, .
\] 
\begin{proof}[Proof of Theorem~\ref{THMtriple}]
We apply Proposition~\ref{PRO1} with $A_n=Z_{\lfloor n\cdot\rfloor}/(\sqrt{n}d_0)$, with $\alpha=\frac 34$, and with the stationary increments process  $X$ replaced by $X/d_1$ and with the scenery $(\xi_x=\xi^{(2)}_{d_1 x})_{x\in\mathbb Z}$.
This will imply the convergence in distribution of
\[
\left(\left(n^{-\frac 34}  X_{\lfloor nt\rfloor}/d_1\right)_t,\left(n^{-\frac{(\beta+3)}{4\beta}}  Y_{\lfloor nt\rfloor}\right)_t, \left(n^{-\frac 12}  Z_{\lfloor nt\rfloor}/d_0\right)_t\right)_{n\in\mathbb N^*}
\]
to  the joint process 
$\left(\Delta,\Gamma,  B\right)$, and so the announced convergence.
Assumption~(i) comes from~\eqref{KS}.  Assumption~(ii) was proved in~\cite[Theorem 1]{CGPPS-AOP-2011}. Assumption~(iv)
was proved in the second part of~\cite[Proposition 27]{Pene-EJP2021},
Assumption~(v) was proved in~\cite[Corollary 2]{CGPPS-AOP-2014}. Finally Assumption~(iii) will follow from Proposition~\ref{PRO2} below, which is an adaptation of the proof of the first part of ~\cite[Proposition 27]{Pene-EJP2021}.
\end{proof}
Set $X''_n:=X_n/d_1$. Observe that $X''_1$ has the same distribution as $\xi_0^{(1)}/d_1$. 
Let $d$ be the greatest common divisor of the set $\{a-b\, :\,  \mathbb P( X''_1 = a)  \mathbb P( X''_1 = b) > 0\}$ and let $a\in \mathbb Z$ such that $\mathbb P(X''_1=a)>0$. Then 
 \[\mathbb P\left(X''_1\in a+d\mathbb Z\right)=1\, .\]
 Since $d_1$ is the greatest common divisor of the support of the distribution of $\xi^{(1)}_0$, 
it follows that the support of the distribution of $X''_1$ generates the group $\mathbb Z$. This ensures that $a$ and $d$ are coprime.\\
 In other words, one can characterize $a$ and $d$ as follows : writing $\varphi_1$ for the characteristic function of $X''_1$, $d$ is also the unique positive integer such that  
 \[\{u\in\mathbb R \, :\; |\varphi_1(u)|=1\} =\frac{2\pi}d\mathbb Z\]
 and $a$ satisfies $\varphi_1(2\pi/d)=e^{\frac {2i\pi a}d}$, i.e. $e^{\frac{2i\pi X''_1}d}=e^{\frac {2i\pi a}d}$ almost surely, and $e^{\frac {2i\pi a}d}$
 is a a primitive $d$-th root of the unity.
 
\begin{prop}\label{PRO2}
For any $x_1,x_2\in\bb R$ and $t_1,t_2>0$, 
\begin{multline}\label{cvxyn2}
n^{\frac 32}\bb P\left(X''_{n_1}=\lfloor n^{\frac 34}x_1\rfloor,X''_{n_1+n_2}=\lfloor n^{\frac 34}x_2\rfloor\right)\\
\sim  \indic_{d\mathbb Z^2}(n_1a-\lfloor n^{3/4} x_1\rfloor,(n_1+n_2)a-\lfloor n^{3/4} x_2\rfloor)
d^2p_{t_1,t_1+t_2}(x_1,x_2)\, ,
\end{multline}
as $n\to\infty$, $\frac{n_i}{n}\to t_i$
where $p_{T_1,T_2}$ is the density of the couple of random variable  $(\Delta_{T_1},\Delta_{T_2})$.
\end{prop}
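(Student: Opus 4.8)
The plan is to prove this two-time local limit theorem by Fourier inversion, running the one-time argument of~\cite[Proposition~27]{Pene-EJP2021} with the scalar spectral variable replaced by a two-parameter linear form. Write $\varphi_1$ for the characteristic function of $X''_1$, set $j_i:=\lfloor n^{3/4}x_i\rfloor$, and let $N(m,z):=\#\{0\le k<m:Z_k=z\}$ be the local time of the driving walk $Z$. Conditionally on $Z$ the increments $\xi^{(1)}_{Z_{k-1}}/d_1$ are independent with characteristic function $\varphi_1$, so the joint characteristic function factorizes as
\[
\Phi_n(u_1,u_2):=\mathbb E\!\left[e^{i(u_1X''_{n_1}+u_2X''_{n_1+n_2})}\right]
=\mathbb E\!\left[\prod_{z\in\mathbb Z}\varphi_1\!\left(u_1N(n_1,z)+u_2N(n_1+n_2,z)\right)\right],
\]
the outer expectation bearing on $Z$. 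Since $X''_{n_1}\in n_1a+d\mathbb Z$ and $X''_{n_1+n_2}\in(n_1+n_2)a+d\mathbb Z$ almost surely, Fourier inversion on the torus gives
\[
\mathbb P\!\left(X''_{n_1}=j_1,X''_{n_1+n_2}=j_2\right)
=\frac{1}{(2\pi)^2}\int_{[-\pi,\pi]^2}e^{-i(u_1j_1+u_2j_2)}\,\Phi_n(u_1,u_2)\,du_1\,du_2.
\]

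Next I would localize the integral near the $d^2$ points $(2\pi k_1/d,2\pi k_2/d)$, $k_1,k_2\in\{0,\dots,d-1\}$, of $[-\pi,\pi)^2$, which are the only places where $|\varphi_1|$, hence $\Phi_n$, does not decay (recall $|\varphi_1|=1$ exactly on $(2\pi/d)\mathbb Z$). The lattice identity $\varphi_1(2\pi\ell/d+s)=e^{2i\pi a\ell/d}\varphi_1(s)$, valid because $X''_1-a\in d\mathbb Z$, yields, writing $u_i=2\pi k_i/d+v_i/n^{3/4}$ and using $\sum_zN(n_1,z)=n_1$, $\sum_zN(n_1+n_2,z)=n_1+n_2$, the exact factorization
\[
\Phi_n\!\left(\tfrac{2\pi k_1}{d}+\tfrac{v_1}{n^{3/4}},\tfrac{2\pi k_2}{d}+\tfrac{v_2}{n^{3/4}}\right)
=e^{2i\pi a(k_1n_1+k_2(n_1+n_2))/d}\,\Phi_n\!\left(\tfrac{v_1}{n^{3/4}},\tfrac{v_2}{n^{3/4}}\right).
\]
The rescaled characteristic function at the origin converges pointwise by the convergence in distribution of $X''_{\lfloor n\cdot\rfloor}/n^{3/4}$ to $\Delta$ established in~\eqref{KS}: indeed $(X''_{n_1}/n^{3/4},X''_{n_1+n_2}/n^{3/4})\Rightarrow(\Delta_{t_1},\Delta_{t_1+t_2})$, so $\Phi_n(v_1/n^{3/4},v_2/n^{3/4})\to\mathbb E[e^{i(v_1\Delta_{t_1}+v_2\Delta_{t_1+t_2})}]$, whose inverse Fourier transform is $p_{t_1,t_1+t_2}$. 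Thus, after the change of variables, each peak contributes $e^{2i\pi a(k_1n_1+k_2(n_1+n_2))/d}\,e^{-2i\pi(k_1j_1+k_2j_2)/d}\,p_{t_1,t_1+t_2}(x_1,x_2)$ to $n^{3/2}\mathbb P(\cdots)$. Summing over the $d^2$ peaks and using $\sum_{k=0}^{d-1}e^{2i\pi k\ell/d}=d\,\indic_{\{\ell\in d\mathbb Z\}}$, the phases assemble into $d^2\,\indic_{d\mathbb Z^2}(n_1a-j_1,(n_1+n_2)a-j_2)$, exactly the right-hand side of~\eqref{cvxyn2}.

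It remains to justify the two analytic ingredients that form the technical core: the domination allowing dominated convergence near each peak, and the negligibility of the complement of the peak neighborhoods. For the former, near the lattice one uses $|\varphi_1(w)|\le e^{-c\,\mathrm{dist}(w,(2\pi/d)\mathbb Z)^2}$ to bound, on the event that all arguments stay in a fixed neighborhood of $(2\pi/d)\mathbb Z$,
\[
\left|\Phi_n\!\left(\tfrac{v_1}{n^{3/4}},\tfrac{v_2}{n^{3/4}}\right)\right|
\le\mathbb E_Z\!\left[\exp\!\left(-c\,n^{-3/2}\sum_{z}\left(v_1N(n_1,z)+v_2N(n_1+n_2,z)\right)^2\right)\right],
\]
the complementary event being discarded through moment bounds on $N(m,\cdot)$; since $n^{-3/2}\sum_z(v_1N(n_1,z)+v_2N(n_1+n_2,z))^2$ converges to a positive multiple of $\int_{\mathbb R}(v_1L^B(t_1,x)+v_2L^B(t_1+t_2,x))^2\,dx$, this produces an $n$-uniform integrable majorant. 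For the latter, away from the peaks one shows that the probability that $Z$ fails to generate enough distinct local-time configurations $(N(n_1,z),N(n_1+n_2,z))$ to push $|\Phi_n|$ below a geometric bound is super-polynomially small, so that the non-peak region contributes $o(n^{-3/2})$.

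Both ingredients are the exact two-time analogues of the single-time estimates of~\cite[Proposition~27]{Pene-EJP2021} for $u\mapsto\mathbb E_Z[\prod_z\varphi_1(uN(m,z))]$, the only structural change being the replacement of the scalar argument $uN(m,z)$ by the linear form $u_1N(n_1,z)+u_2N(n_1+n_2,z)$ and of the single local time $L^B(t,\cdot)$ by the pair $(L^B(t_1,\cdot),L^B(t_1+t_2,\cdot))$. I expect the main obstacle to be precisely this two-time bookkeeping in the non-peak estimate: one must rule out, uniformly over that region, near-resonances of the two-parameter linear form along the random local-time profile of $Z$, which requires joint moment control of $(N(n_1,\cdot),N(n_1+n_2,\cdot))$ rather than the one-time bounds used in the reference.
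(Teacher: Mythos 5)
Your skeleton matches the paper's: Fourier inversion on the torus, factorization of the joint characteristic function over scenery sites conditionally on the driving walk, the lattice identity $\varphi_1(2\pi\ell/d+s)=e^{2i\pi a\ell/d}\varphi_1(s)$ to collapse the $d^2$ peaks into the factor $d^2\,\indic_{d\mathbb Z^2}(n_1a-\lfloor n^{3/4}x_1\rfloor,(n_1+n_2)a-\lfloor n^{3/4}x_2\rfloor)$, and identification of the limit near the origin. (The paper does all of this too, as its extension of \cite[Lemma 15]{CGPPS-AOP-2014}; also, your justification of the factorization is loosely worded --- conditionally on $Z$ the increments $\xi^{(1)}_{Z_{k-1}}/d_1$ are \emph{not} independent since sites are revisited; independence holds across sites $z$, which is what your displayed formula actually uses.)

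The genuine gap is in the two analytic ingredients you defer, which are the technical core and which your sketch would not deliver as stated. First, near the peaks: your proposed majorant $\mathbb E_Z\bigl[\exp\bigl(-c\,n^{-3/2}\sum_z(v_1N(n_1,z)+v_2N(n_1+n_2,z))^2\bigr)\bigr]$ is not an $n$-uniform integrable dominating function just because the quadratic form converges in distribution to a positive limit: after integrating in $v$ this expectation is of order $\mathbb E[\det(n^{-3/2}A_{n_1,n_2})^{-1/2}]$, where $A_{n_1,n_2}$ is the Gram matrix of the two local-time profiles, and convergence in distribution gives no control of such inverse moments when the matrix degenerates (e.g.\ the two profiles nearly proportional). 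One needs a quantitative anti-degeneracy estimate --- precisely \cite[Lemma 21]{CGPPS-AOP-2014}, which gives $\mathbb P\bigl(D_{n_1,n_2}<n^{-\theta_0}(n_1n_2)^{3/2}\bigr)=o(n^{-3/2})$ together with ${\mathbf L}^p$-boundedness of $D_{n_1,n_2}^{-1/2}\indic_{\Omega_{n_1,n_2}}$, hence uniform integrability. Second, your quadratic bound on $|\varphi_1|$ only controls the integrand where no argument $u_1N(n_1,z)+u_2N(n_1+n_2,z)$ wraps around $(2\pi/d)\mathbb Z$; since local times reach order $\sqrt n$, this confines you to $|u_i|\lesssim n^{-1/2}$, leaving the whole intermediate region $n^{-1/2-\eta}\lesssim|u_i|\lesssim 1$ untreated --- this is exactly what \cite[Propositions 18 and 19]{CGPPS-AOP-2014} and \cite[Lemma 23]{CGPPS-AOP-2014} handle, and it is hard. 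The paper's proof sidesteps re-deriving all of this by observing that those estimates, proved in \cite{CGPPS-AOP-2014} for the two-time problem at $x_1=x_2=0$, are bounds on absolute values and therefore survive the extra phase $e^{-i\langle\theta,a_n(x)\rangle}$ unchanged; only the limit identification must be redone, which the paper does via the conditional Gaussian approximation \cite[Lemma 22]{CGPPS-AOP-2014}, the change of variables $u=A_{n_1,n_2}^{1/2}\theta$, and the convergence in distribution of $n^{-3/2}A_{n_1,n_2}$ \cite[Proposition 7]{CGPPS-AOP-2014} combined with uniform integrability. Your alternative route for the limit (pointwise convergence of the rescaled characteristic function via the FCLT) is legitimate, but without importing or reproving the non-degeneracy and intermediate-region estimates, the proof does not close.
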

\begin{proof}
Recall that $(X''_n)_n$ 
is the PAPA with random walk $(S_n:=Z_n/d_0)_n$ on $\mathbb Z$
and with random scenery $\left(\xi'_z:=\xi_{d_0 z}^{(1)}/d_1\right)$. 
We adapt the proof of~\cite[Theorem 5]{CGPPS-AOP-2014} in which the case where the $x_j$'s are null was investigated.
We start by setting some notations. 
Let $t_1,t_2$ be two positive real numbers and $n_1,n_2$ be two positive integers. Let us write $N_m(z)$ for the local time at time $m$ and at position $z$ of the random walk $S$. We set 
$N_{n_1}^{(1)}(x)=N_{n_1}(x)$ and  $N_{n_2}^{(2)}(x)=N_{n_1+n_2}(x)-N_{n_1}(x)$\, .
We consider the matrix
\[A_{n_1,n_2}=\left(\left\langle N_{n_i}^{(i)},N_{n_j}^{(j)}\right\rangle_{\ell^2(\mathbb Z)}\right)_{i,j=1,2}\, ,\]
i.e.
\[A_{n_1,n_2}=
\begin{pmatrix}
\left\langle N_{n_1},N_{n_1}\right\rangle_{\ell^2(\mathbb Z)} & 
\left\langle N_{n_1},N_{n_1+n_2}-N_{n_1}\right\rangle_{\ell^2(\mathbb Z)} \\
\left\langle N_{n_1},N_{n_1+n_2}-N_{n_1}\right\rangle_{\ell^2(\mathbb Z)} &
\left\langle N_{n_1+n_2}-N_{n_1},N_{n_1+n_2}-N_{n_1}\right\rangle_{\ell^2(\mathbb Z)}
\end{pmatrix}
\, .
\]
We also define
\[D_{n_1,n_2}=\det A_{n_1,n_2}\, .\]
Writing $L_t=L^B(t,\cdot)$ and setting $T_1:=t_1$ and $T_2:=t_1+t_2$,
we consider also their continuous analogues defined as follows
\[ M_{T_1,T_2}=\left(\left\langle L_{T_i},L_{T_j}\right\rangle_{{\mathbf L}^2(\mathbb R)}\right)_{i,j=1,2},
\qquad
\cal D_{T_1,T_2}=\det M_{T_1,T_2},\]
\[\tilde M_{t_1,t_2}=\begin{pmatrix}
\left\langle L_{T_1},L_{T_1}\right\rangle_{{\mathbf L}^2(\mathbb R)} & \left\langle L_{T_1},L_{T_2}-L_{T_1}\right\rangle_{{\mathbf L}^2(\mathbb R)}\\
\left\langle L_{T_1},L_{T_2}-L_{T_1}\right\rangle_{{\mathbf L}^2(\mathbb R)} & \left\langle L_{T_2}-L_{T_1},L_{T_2}-L_{T_1}\right\rangle_{{\mathbf L}^2(\mathbb R)}
\end{pmatrix},
\qquad
\tilde{\cal D}_{t_1,t_2}=\det \tilde M_{t_1,t_2}\, .\]
We observe that $\tilde{\cal D}_{t_1,t_2}=\cal D_{T_1,T_2}$.
Setting
$$\varphi_{n_1,n_2}(\theta):=\bb E\left[e^{i(\theta_1 X''_{n_1}+\theta_2(X''_{n_1+n_2}-X''_{n_1}))}\right]\, ,$$
and
$a_n(x)=([n^{3/4}x_1],[n^{3/4}x_2]-[n^{3/4}x_1])$,
we have
\[
\bb P\left(X''_{n_1}=[n^{3/4}x_1],X''_{n_1+n_2}=[n^{3/4}x_2]\right)
=\frac{1}{(2\pi)^2}\int_{[-\pi ,\pi]^2}
\varphi_{n_1,n_2}(\theta)\exp(-i\left\langle \theta,a_n(x)\right\rangle)\,d\theta\, .
\]
Let us explain how to adapt~\cite[Lemma 15]{CGPPS-AOP-2014}. 
Since $X''$ is a PAPA with random walk $S$ and random scenery $(\xi'_z)_{z\in\mathbb Z}$,  we observe that
\begin{align*}
\varphi_{n_1,n_2}(\theta)&=\bb E\left[e^{i\sum_{z\in\mathbb Z}\xi'_z(\theta_1 N_{n_1}(z)+\theta_2(N_{n_1+n_2}(z)-N_{n_1}(z))}\right]
.\end{align*}
Since, conditionally to $S$, $(\xi'_z)_{z\in\mathbb Z}$
is a sequence of independent identically distributed random variables of characteristic function $\varphi_{1}$, it follows that
\begin{align*}
\varphi_{n_1,n_2}(\theta)&=\bb E\left[\prod_{z\in\mathbb Z}\varphi_{1}\left(\theta_1 N_{n_1}(z)+\theta_2(N_{n_1+n_2}(z)-N_{n_1}(z))\right)\right]
\, .\end{align*}
Since $e^{2i\pi\xi'_0/d}=\lambda:=\varphi_{1}(2\pi/d)=e^{2i\pi a/d}$ almost surely, the following relation holds true
\[
\varphi_{1}\left(\frac{2k\pi}d+u\right)=\lambda^k
\varphi_1(u)\, .
\]
Thus
\begin{align*}
\varphi_{n_1,n_2}&\left(\theta+\frac{2\pi}d(k_1,k_2)\right)\\
&=\mathbb E\left[\prod_{z\in\mathbb Z}\varphi_{1}\left(\theta_1 N_{n_1}^{(1)}(z)+\theta_2N_{n_2}^{(2)}(z)\right)\lambda^{k_1N_{n_1}^{(1)}(z)+k_2N_{n_2}^{(2)}(z)}
\right]\\
&=\lambda^{k_1n_1+k_2n_2}\mathbb E\left[\prod_{z\in\mathbb Z}\varphi_{1}\left(\theta_1 N_{n_1}^{(1)}(z)+\theta_2N_{n_2}^{(2)}(z)\right)
\right]\, ,
\end{align*}
where we used the fact that $\sum_{z\in\mathbb Z}N_{n_1}(z)=n_1$ and $\sum_{z\in\mathbb Z}N^{(2)}_{n_2}(z)=n_2$. 
It follows that
\begin{align*}
\sum_{k_1,k_2=0}^{d-1}&\varphi_{n_1,n_2}\left(\theta+\frac{2\pi}d(k_1,k_2)\right)\exp\left(-i\left\langle \theta+\frac{2\pi}d (k_1,k_2),a_n(x)\right\rangle\right)\\
&= \varphi_{n_1,n_2}(\theta)\exp\left(-i\left\langle \theta,a_n(x)\right\rangle\right)\sum_{k_1,k_2=0}^{d-1}e^{-\frac{2i\pi}d\left(\langle k,a_n(x)\rangle-\sum_{j=1}^2k_j( an_j)\right)}\\
&= d^2\indic_{d\mathbb Z^2}(a(n_1,n_2)-a_n(x))\varphi_{n_1,n_2}(\theta)\exp\left(-i\left\langle \theta,a_n(x)\right\rangle\right)\, ,
\end{align*}
where we used the fact that
\[
\sum_{k=0}^{d-1}e^{\frac{2i\pi}dk( an_j-z_j)}=d\indic_{d\mathbb Z}(an_j-z_j)\, .
\]
Hence, we have proved that
\begin{multline}\label{AAA}
\bb P\left(X''_{n_1}=\lfloor n^{3/4}x_1\rfloor,X''_{n_1+n_2}=\lfloor  n^{3/4}x_2\rfloor \right)=
\indic_{d\mathbb Z^2}(a(n_1,n_2)-a_n(x))\\
\times\frac{d^2}{(2\pi)^2}\int_{[-\pi/d,\pi/d]^2}
\varphi_{n_1,n_2}(\theta)\exp(-i\left\langle \theta,a_n(x)\right\rangle)\,d\theta\, .
\end{multline}
This identity extends~\cite[Lemma 15]{CGPPS-AOP-2014} and ends the proof of the Proposition in the case when $a(n_1,n_2)-a_n(x)\not\in d\mathbb Z^2$.\\ 
It follows from
~\cite[Propositions 18 and 19]{CGPPS-AOP-2014} that, for every $\eta\in(0,\frac18)$,
\begin{multline*}
\int_{[-\pi/d,\pi/d]^2}
\varphi_{n_1,n_2}(\theta)\exp(-i\left\langle \theta,a_n(x)\right\rangle)\,d\theta\\
=o(n^{-3/2})+
 \int_{U(\eta)}
\varphi_{n_1,n_2}(\theta) \exp(-i\left\langle \theta,a_n(x)\right\rangle)\,d\theta\, ,
\end{multline*}
where we set
\[U(\eta)=\{|\theta_i|\leq n_i^{-1/2-\eta}, i=1,2\}\, .\]

It follows from this formula combined with~\cite[Lemma 22]{CGPPS-AOP-2014} that
\begin{multline}\label{AAA0}
\int_{[-\pi/d,\pi/d]^2}
\varphi_{n_1,n_2}(\theta)\exp(-i\left\langle \theta,a_n(x)\right\rangle)\,d\theta\\
=o(n^{-3/2})+\int_{U(\eta)} \exp(-i\left\langle \theta,a_n(x)\right\rangle)   \bb E\left[\exp{\left(-\frac{1}{2}\left\langle A_{n_1,n_2} \theta,\theta\right\rangle\right)}\right]\,d\theta\, .
\end{multline}
We define, for every $\varepsilon >0$,
$$\Omega_{n_1,n_2}(\varepsilon)=\left\{ (n_1 n_2)^{-3/2}\, D_{n_1,n_2}\geq \varepsilon\right\}.$$ 
Let $v\in (0,1)$ and $\theta_0\in (0,\frac{v}{4})$ be fixed. According to~\cite[Lemma 21]{CGPPS-AOP-2014}
\[\bb P(\Omega_{n_1,n_2}^{c}(n^{-\theta_0})) = o(n^{-\frac 32})\, .\]
Thus,~\eqref{AAA0} becomes
\begin{multline}\label{AAA1}
\int_{[-\pi/d,\pi/d]^2}
\varphi_{n_1,n_2}(\theta)\exp(-i\left\langle \theta,a_n(x)\right\rangle)\,d\theta\\
=o(n^{-3/2})+ 
\quad\quad + \int_{U(\eta)}
 \exp{(-i\left\langle \theta,a_n(x)\right\rangle)} \bb E\left[\exp{(-\frac{i}{2}\left\langle A_{n_1,n_2} \theta,\theta\right\rangle}) \indic_{\Omega_{n_1,n_2}(n^{-\theta_0})} \right]\,d\theta\, .
\end{multline}
We decompose this last integral as follows
\begin{equation}\label{AAA2}
\int_{U(\eta)} \exp(-i\left\langle \theta,a_n(x)\right\rangle) \bb E\left[\exp{(-\frac{i}{2}\left\langle A_{n_1,n_2} \theta,\theta\right\rangle}) \indic_{\Omega_{n_1,n_2}(n^{-\theta_0})} \right]\,d\theta=
I_{n_1,n_2} - J_{n_1,n_2}\, ,
\end{equation}
where $I_{n_1,n_2}$ is the integral  over $\bb R^2$ and $J_{n_1,n_2}$ is the integral over $\{\exists j : |\theta_j|> n_j^{-1/2-\eta}\}.$
It follows from~\cite[proof of Lemma 23]{CGPPS-AOP-2014} that
\begin{equation}\label{AAA3}
n^{3/2} J_{n_1,n_2}\xrightarrow[n\to\infty,\frac{n_i}{n}\to t_i]{} 0.
\end{equation}
With the change of variable $u=A_{n_1,n_2}^{1/2}\theta$, $I_{n_1,n_2}$ becomes
\[I_{n_1,n_2}=\int_{\bb R^2}\bb E \left[\exp\left(-i\left\langle A_{n_1,n_2}^{-1/2}u,a_n(x)\right\rangle\right)D_{n_1,n_2}^{-1/2}\, \indic_{\Omega_{n_1,n_2}(n^{-\theta_0})}\, .\right]
e^{-\frac{1}{2}\|u\|^2}\,du\, .\]
Furthermore~\cite[Proposition~7]{CGPPS-AOP-2014}
ensures that
\[n^{-3/2}B_{n_1,n_2}\xrightarrow[n\to\infty,\frac{n_i}{n}\to t_i]{\cal L}
M_{T_1,T_2},\]
with
\[B_{n_1,n_2}=\left(\left\langle N_{n_1+\cdots +n_i},N_{n_1+\cdots +n_j}\right\rangle\right)_{i,j=1,2}=
\begin{pmatrix}
\left\langle N_{n_1},N_{n_1}\right\rangle & 
\left\langle N_{n_1},N_{n_1+n_2}\right\rangle \\
\left\langle N_{n_1},N_{n_1+n_2}\right\rangle &
\left\langle N_{n_1+n_2},N_{n_1+n_2}\right\rangle
\end{pmatrix}\]
which implies that 
\[n^{-3/2}A_{n_1,n_2}\xrightarrow[n\to\infty,\frac{n_i}{n}\to t_i]{\cal L}
\tilde M_{t_1,t_2}\, ,\]
and so that
\[n^{3/4}A^{-1/2}_{n_1,n_2}\xrightarrow[n\to\infty,\frac{n_i}{n}\to t_i]{\cal L}
\tilde M^{-1/2}_{t_1,t_2},
\quad
n^{3/2}D^{-1/2}_{n_1,n_2}\xrightarrow[n\to\infty,\frac{n_i}{n}\to t_i]{\cal L}
\tilde{D}^{-1/2}_{t_1,t_2}\, .
\]
In particular
\[\left\langle A_{n_1,n_2}^{-1/2}u,a_n(x)\right\rangle
=\left\langle n^{3/4}A_{n_1,n_2}^{-1/2}u,
\frac{a_n(x)}{n^{3/4}}\right\rangle
\xrightarrow[n\to\infty,\frac{n_i}{n}\to t_i]{\cal L}
\left\langle \tilde M^{-1/2}_{t_1,t_2}u,
a(x)\right\rangle\]
with $a(x)=(x_1, x_2-x_1)$,
and
\begin{multline*}n^{3/2}\exp\left(-i\left\langle A_{n_1,n_2}^{-1/2}u,a_n(x)\right\rangle\right)
D_{n_1,n_2}^{-1/2}  \, \indic_{\Omega_{n_1,n_2}(n^{-\theta_0})}\,\\ 
\xrightarrow[n\to\infty,\frac{n_i}{n}\to t_i]{\cal L}
\exp\left(-i\left\langle \tilde M^{-1/2}_{t_1,t_2}u,
a(x)\right\rangle \right)\tilde{D}^{-1/2}_{t_1,t_2}
\end{multline*}
It follows from~\cite[Lemma 21]{CGPPS-AOP-2014} that this sequence of random variables is bounded in ${\mathbf L}^p$ for $p>1$ and so is uniformly integrable. We infer that
\begin{multline}\label{AAA4}
n^{3/2} I_{n_1,n_2}=n^{3/2}\int_{\bb R^2}\bb E \left[\exp\left(-i\left\langle A_{n_1,n_2}^{-1/2}u,a_n(x)\right\rangle\right)
D_{n_1,n_2}^{-1/2}\, \indic_{\Omega_{n_1,n_2}(n^{-\theta_0})} \right]e^{-\frac{1}{2}\|u\|^2}\,du\\
\xrightarrow[n\to\infty,\frac{n_i}{n}\to t_i]{}
\int_{\bb R^2}\bb E \left[\exp\left(-i\left\langle \tilde M^{-1/2}_{t_1,t_2}u,
a(x)\right\rangle\right)
\tilde{D}^{-1/2}_{t_1,t_2}\right]e^{-\frac{1}{2}\|u\|^2}\,du.
\end{multline}
With the change of variable $u=\tilde M^{1/2}_{t_1,t_2}\theta$, the above limit is equal to
\begin{align}\nonumber
&\int_{\bb R^2}\exp\left(-i\left\langle \theta,
a(x)\right\rangle\right)
\bb E\left[ e^{-\frac{1}{2}\left\langle \tilde M_{t_1,t_2}\theta,\theta\right\rangle}\right]\,d\theta\\
\nonumber&=\int_{\bb R^2}\exp\left(-i\left\langle u,
x\right\rangle\right)
\bb E e^{iu_1\Delta_{T_1}+iu_2\Delta_{T_2}}\,du\\
&=4\pi^2 p_{T_1,T_2}(x_1,x_2)\label{AAA5}\, ,
\end{align}
using the change of variable $u_1=\theta_1-\theta_2$, $u_2=\theta_2$.
The proposition follows from the combination of \eqref{AAA}, \eqref{AAA1}, \eqref{AAA2}, \eqref{AAA3},~\eqref{AAA4} and~\eqref{AAA5}.
\end{proof}

\section{Conjecture and properties of the limit processes}\label{conjppt}
Considering $(\xi_\ell^{(p)})_{\ell\in\mathbb Z,p\in\mathbb N}$
a sequence of i.i.d. Rademacher random variables,
we define inductively on $p$ the processes $PA(p)$ as follows
\begin{equation}\label{PA(p)}
\forall n\in\mathbb N,\quad (PA(0))_n=n\quad\mbox{and}\quad\forall p\in\mathbb N,\  
(PA(p+1))_n:=\sum_{j=1}^{n}\xi^{(p)}_{(PA(p))_{j-1}}\, .
\end{equation}
The diffusivity of the process $PA(p)$ is measured by 
its normalizing exponent $\alpha_p$. 
The greater the exponent $\alpha_p$ is, the more diffusive  $PA(p)$ is.
Proposition~\ref{PRO1} (applied with $\beta=2$) enlights the following recurrence relation
between the successive normalizing exponents of the $PA(p)$ for
$p\in\{2,3\}$~:
\begin{equation}\label{recalphap}
\alpha_p=1-\frac{\alpha_{p-1}}2=f(\alpha_{p-1}),\quad \mbox{with }\alpha_1=1/2\, .
\end{equation}
Observe that the function $f$ given by $f(x)=1-\frac x2$ is
a contracting decreasing affine function with
a single fixed point $x_0=\frac 23$ and note that $\alpha_1=1/2<x_0$. Thus
\[
\alpha_1=\frac 12<\alpha_3=\frac 58<x_0<\alpha_2=\frac 34\ .
\]
To understand the relation~\eqref{recalphap} between $\alpha_p$ and $\alpha_{p-1}$, a  key remark made by Kesten and Spitzer in~\cite{KS-1979} is given by the three following lines of formulas leading to an asymptotic estimate of the variance of $(PA(p))_n$. 
Using the definition of $PA(p)$ combined with the fact that $(\xi_y^{(p)})_y$ is a sequence of centered independent random variables independent of $PA(p-1)$, we first notice that 
\begin{align*}
\mathbb E\left[(PA(p))_n^2\right]&=\mathbb E\left[\left(\sum_{k=1}^n\xi^{ (p)}_{(PA(p-1))_k}\right)^2\right]=\mathbb E\left[\left(\xi_0^{(p)}\right)^2\right]
\mathbb E[V_n]\, ,
\end{align*} 
where $V_n:=\sum_{k=l=1}^n\mathbf 1_{\{(PA(p-1))_k=(PA(p-1))_l\}}$ is the self-intersection local time of $PA(p-1)$ up to time $n$. 
Second, using the stationarity of the increments of $PA(p-1)$, it follows that
\begin{align*}
\mathbb E[V_n]
&=\sum_{k,l=1}^n\mathbb P\left((PA(p-1))_k=(PA(p-1))_l\right)
=\sum_{k,l=1}^n\mathbb P\left((PA(p-1))_{|l-k|}=0\right)\, .
\end{align*}
Provided a local limit theorem holds for $PA(p-1)$ (which was proved only when $p\in\{2,3\}$), this leads to
\[
\mathbb E\left[(PA(p))_n^2\right]
\sim c \sum_{k,l=1}^n (1+|l-k|)^{-\alpha_{p-1}}\sim c' n^{2-\alpha_{p-1}}=c'n^{2\alpha_p}\, ,\]
leading to the recurrence relation~\eqref{recalphap}.
From a probabilistic point of view, the reason why the function $f$ appearing in the recurrence relation~\eqref{recalphap} is decreasing is that, roughly speaking, the more diffusive the process $PA(p-1)$ is, the smaller is its self-intersection local time $V_n$
(and also the smaller is the probability that the process visits the same site at two different times), and so if $PA(p)$ is more diffusive than $PA(p-1)$, then $PA(p+1)$ will be less diffusive than $PA(p)$.\\
In light of the results established in the present article, we formulate the following conjecture.
\begin{conj}[Higher order Iterated PAPA]
It should be possible to apply the argument of Proposition~\ref{PRO1} inductively in $p$ for
$PA(p)$ with $\beta=2$. This leads us to conjecture that the normalizing exponent $\alpha_p$ of $PA(p)$
should be given by
\begin{equation}\label{alphap}
\alpha_0=1\quad\mbox{and}\quad
\alpha_p:=1-\frac{\alpha_{p-1}}2\, ,
\end{equation}
i.e. by
$\alpha_p=\frac 23+\frac{(-1/2)^{p}}{3}$. 
The sequence of odd (resp. even) exponents $\alpha_p$ is increasing (resp. decreasing) in $p$ and converges to $\frac23$.\\
Let us denote for any $p\geq 1$,
\[\left(\mathcal P_n^{(p)}:=\left(n^{-\alpha_p}(PA(p))_{\lfloor nt\rfloor}\right)_{t\ge 0}\right)_{n\in\mathbb N^*}\] 
We also conjecture that, for any integer $p\ge 4$, the family of processes \[\left(\mathcal P_n^{(1)},\ldots,\mathcal P_n^{(p)} \right)_{n\in\mathbb N^*}\] 
converges in distribution to the process 
$(\Xi^{(1)},\ldots,\Xi^{(p)})$ where $\Xi^{(p)}$ is defined inductively by
\begin{equation}\label{Zp}
\Xi^{(0)}=id_{[0,+\infty)}\quad\mbox{and}\quad \Xi^{(p+1)}_t:=\int_{\mathbb R}L^{(p)}
(t,x)\, {\rm d}W^{(p)}(x)\, ,
\end{equation}
where $L^{(p)}$ is the local time of the process
$\Xi^{(p)}$ and where $(W^{(p)})_{p\ge 0}$ is a sequence of independent standard Brownian motions.\\
As pointed out by a referee, a natural question is whether the sequence of processes $(\Xi^{(p)})_p$ converges to a process $\Xi^{(\infty)}$ (with local time $(L^{(\infty)} (t,x))_{t,x}$) that would be a fixed point in distribution of the induction relation~\eqref{Zp}, i.e. that would satisfy the following identity in distribution
\[
\left(\Xi^{(\infty)}_t\right)_t\stackrel{distrib}{=} \left(\int_{\mathbb R}L^{(\infty)} (t,x)\, {\rm d}W^{(1)}(x)\right)_t \, ,
\]
and this also leads to the question of the existence of 
such a process $\Xi^{(\infty)}$.
\end{conj}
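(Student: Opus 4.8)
The plan is to separate the \emph{deterministic} part of the statement (the closed form and the monotonicity of the exponents $\alpha_p$) from its \emph{probabilistic} part (the joint functional convergence, together with the well-definedness of the limit processes), and to attack the latter by iterating the scheme of Section~\ref{secgene}. The exponents are settled immediately: the recurrence $\alpha_p=1-\alpha_{p-1}/2$ of~\eqref{alphap} is affine with unique fixed point $2/3$, so setting $e_p:=\alpha_p-\tfrac23$ gives $e_p=-\tfrac12 e_{p-1}$, whence $e_p=(-\tfrac12)^p e_0=\tfrac13(-\tfrac12)^p$ because $\alpha_0=1$. This yields $\alpha_p=\tfrac23+\tfrac13(-\tfrac12)^p$, and the sign of $(-\tfrac12)^p$ together with $|-1/2|<1$ shows that the odd-indexed $\alpha_p$ increase, the even-indexed ones decrease, and both converge to $2/3$.

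Next I would carry the limit objects $\Xi^{(p)}$ along an induction on $p$. Assume that $\Xi^{(p)}$ is $\alpha_p$-self-similar with stationary increments and admits a jointly continuous local time $L^{(p)}$ whose self-intersection local time $\int_{\mathbb R}(L^{(p)}(t,x))^2\,dx$ is almost surely finite. Then the Wiener integral $\Xi^{(p+1)}_t:=\int_{\mathbb R}L^{(p)}(t,x)\,dW^{(p)}(x)$ in~\eqref{Zp} is well defined, and its $\alpha_{p+1}$-self-similarity follows from that of $\Xi^{(p)}$ combined with the scaling of the local time. The step that is \emph{not} free is propagating the inductive hypothesis: one must show that $\Xi^{(p+1)}$ again possesses a jointly continuous local time with finite self-intersection, which for $p\le 2$ is the content of the works cited in Section~\ref{secmod} but for general $p$ demands new regularity estimates on $\Xi^{(p+1)}$.

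For the functional convergence the idea is to apply Proposition~\ref{PRO1} repeatedly, exactly as items (I) and (II) were carried out for $p\in\{2,3\}$: to pass from the joint convergence of $(\mathcal P_n^{(1)},\dots,\mathcal P_n^{(p-1)})$ to that of $(\mathcal P_n^{(1)},\dots,\mathcal P_n^{(p)})$ one invokes the Proposition with $A_n=(\mathcal P_n^{(1)},\dots,\mathcal P_n^{(p-2)})$, with $X=PA(p-1)$, scenery $\xi^{(p-1)}$ and $\beta=2$. The real work is to check assumptions (i)--(v) of Proposition~\ref{PRO1} for $X=PA(p-1)$: a single local limit theorem (ii), a multi-time local limit theorem (iii) identifying the limiting density $p_{T_1,T_2}$ of $(\Xi^{(p-1)}_{T_1},\Xi^{(p-1)}_{T_2})$, the uniform off-diagonal bound (iv), and the continuity (v). The variance heuristic displayed just before the conjecture shows why these estimates are consistent with the exponent recurrence, but it remains to make them rigorous at each level.

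The main obstacle is precisely the local limit theorem for $PA(p)$ when $p\ge 3$. For $p\le 2$ one has explicit Gaussian or stable structure (a genuine random walk, then the Kesten--Spitzer process), and the multi-time estimate~\eqref{cvxyn2} was obtained in Proposition~\ref{PRO2} by a Fourier computation resting on the Gaussianity of the Brownian local time and on the asymptotics of the determinant $D_{n_1,n_2}$. For $\Xi^{(p)}$ with $p\ge 3$ no such explicit tools are available: one would need sharp two-point local estimates for an iterated process whose local time has no exact law, together with the off-diagonal control (iv) and the finiteness of its self-intersection local time, and none of these is currently within reach. Finally, the fixed-point question for $\Xi^{(\infty)}$ would require proving tightness of the laws of the $\alpha_p$-self-similar processes $\Xi^{(p)}$ (with $\alpha_p\to 2/3$) and identifying any subsequential limit as a solution of the distributional identity associated with~\eqref{Zp}; obtaining such a contraction or compactness in an appropriate space of self-similar processes is entirely open.
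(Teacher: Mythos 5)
Since the statement under review is a conjecture, there is no proof to reproduce; what can be compared is your treatment of its provable fringe and your assessment of the obstacles, and on both counts you essentially follow the paper. Your solution of the recurrence $\alpha_p=1-\alpha_{p-1}/2$ (affine contraction, fixed point $2/3$, hence $\alpha_p=\tfrac23+\tfrac13(-\tfrac12)^p$ with alternating monotone subsequences) is exactly the content of~\eqref{alphap}, and your proposed strategy --- iterate Proposition~\ref{PRO1} with $\beta=2$, taking $A_n=(\mathcal P_n^{(1)},\dots,\mathcal P_n^{(p-2)})$ and $X=PA(p-1)$, with the bottleneck being hypotheses (ii)--(iv), i.e.\ the single- and multi-time local limit theorems for $PA(p-1)$ --- is precisely the difficulty the paper itself singles out after the conjecture (noting that even the LLT for $PA(2)$ was hard, and that $PA(3)$ is worse because the increments of $PA(2)$ remain dependent in the limit).

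One point where you overstate what is open: you list the finiteness of the self-intersection local time of $\Xi^{(p+1)}$, and more generally the well-definedness of the limit chain, as requiring ``new regularity estimates'' out of current reach. In fact the paper settles this unconditionally in Theorem~\ref{THH}: by a two-step induction (using the statement at ranges $p-1$ and $p-2$, via the Geman--Horowitz criterion~\cite[Theorem~(21.9)]{GH} and a Cauchy--Schwarz/Borell argument) it proves that every $\Xi^{(p)}$ is well defined, $\alpha_p$-self-similar, has stationary increments, and admits a local time $L^{(p)}$ with $\mathbb E\bigl[\int_{\mathbb R}(L^{(p)}(t,x))^2\,{\rm d}x\bigr]<\infty$, which is enough to define $\Xi^{(p+1)}$ by~\eqref{Zp}. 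What genuinely remains open on the limit side is only the \emph{joint continuity} of $L^{(p)}$ for $p\ge 3$ (needed for hypothesis (i) of Proposition~\ref{PRO1} at the next induction step), not its existence or square integrability; the substantive obstructions are, as you and the paper agree, the discrete-level local limit estimates.
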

A main difficulty to prove this conjecture for $PA(p)$ (with $p>3$) is to check 
 the assumptions (ii), (iii) and (iv) of Proposition \ref{PRO1} related to the Local Limit Theorem (and multi-time versions of it) of $PA(p-1)$. Even the Local Limit Theorem for PA(2) was hard to prove. The proof of the Local Limit Theorem for PA(2) strongly uses the fact that PA(1) has independent increments. This question becomes even more intricate for PA(3) because of the dependence of the increments of PA(2) (this dependence does not disappear asymptotically).

Let us observe that the limit processes $\Xi^{(p)}$ defined in~\eqref{Zp} are the analogue, in continuous time, of the 
$PA(p)$ defined in~\eqref{PA(p)}. 
Indeed, the last equation of~\eqref{PA(p)} can be rewritten as follows 
\[
(PA(p+1))_n=\sum_{\ell\in\mathbb Z} \xi^{(p)}_\ell N_n^{(p)}(\ell)
\, ,
\]
where $N_n^{(p)}$ stands for the local time of the $PA(p)$, given by
\[
N_n^{(p)}(\ell):=\#\left\{k=0,...,n-1\, :\, (PA(p))_k=\ell\right\}\, .
\]
We will now see that the family of processes
$(\Xi^{(p)})_p$ is well defined inductively and study some of their properties.
\begin{thm}\label{THH}
For any integer $p\ge 0$, 
the process $\Xi^{(p)}$:
\begin{itemize}
    \item is well defined,
    \item is $\alpha_p$-self-similar (i.e. for every $c>0$, $(\Xi^{(p)}(ct))_{t\geq 0}$ has the same distribution as $(c^{\alpha_p} \Xi^{(p)}(t))_{t\geq 0}$) with $\alpha_p$ given by~\eqref{alphap}, 
    \item has stationary increments and admits a local time $L^{(p)}$ such that, 
    \begin{equation}\label{sqInt}
    \forall t>0,\quad \mathbb E\left[\int_{\mathbb R}\left(L^{(p)}(t,x)\right)^2\, {\rm d}x\right]<\infty\, .\end{equation}
\end{itemize}
Furthermore, for any $p\geq 2$, the process $\Xi^{(p)}$ has a continuous modification which, with probability one, is locally $\gamma$-Hölder continuous for all $0<\gamma<\alpha_p - \frac12$ and
$$\mathbb E\left[ (\Xi_1^{(p)} )^2\right] =  \mathbb E\left[ \int_{\bb R} (L^{(p-1)}(1,x))^2\, {\rm d}x \right]\, .$$
For $p=3,$
\begin{equation}\label{YYY}
\mathbb E\left[ (\Xi_1^{(3)} )^2\right] = \frac{16}{5}\sqrt{\frac{2}{\pi}}  {\mathbb E}\left[ \frac{1}{\sqrt{V_1^{(B)}}}\right]\, ,
\end{equation}
 where 
 $V_1^{(B)}= \int_{\bb R} (L^{(B)}(1,x))^2\, {\rm d}x$ 
 denotes the self-intersection local time of the real Brownian motion.
\end{thm}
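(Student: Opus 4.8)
The plan is to prove~\eqref{YYY} by combining the second moment identity $\mathbb{E}[(\Xi_1^{(3)})^2]=\mathbb{E}[\int_{\mathbb R}(L^{(2)}(1,x))^2\, {\rm d}x]$ (the case $p=3$ of the identity established in the previous part of the theorem, itself a consequence of the conditional It\^o isometry, since given $\Xi^{(2)}$ the process $\Xi^{(3)}$ is a Wiener integral against the independent $W^{(2)}$) with an explicit evaluation of the right-hand side using the conditional Gaussian structure of $\Delta=\Xi^{(2)}$ and Brownian scaling. Since $L^{(2)}=L^{(\Delta)}$, the task reduces to computing $\mathbb{E}[V_1^{(\Delta)}]$, where $V_1^{(\Delta)}:=\int_{\mathbb R}(L^{(\Delta)}(1,x))^2\, {\rm d}x$ is the self-intersection local time of $\Delta$; it is finite by~\eqref{sqInt}.

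First I would rewrite $V_1^{(\Delta)}$ as a double time integral. Writing $\phi_\varepsilon$ for the centered Gaussian density of variance $\varepsilon$, the occupation density formula (applied twice, in $u$ and then in $s$) gives
\[\int_0^1\!\!\int_0^1\phi_\varepsilon(\Delta_s-\Delta_u)\, {\rm d}s\, {\rm d}u=\int_{\mathbb R}\int_{\mathbb R}\phi_\varepsilon(x-y)\,L^{(\Delta)}(1,x)\,L^{(\Delta)}(1,y)\, {\rm d}x\, {\rm d}y,\]
and by Plancherel the right-hand side equals $\tfrac{1}{2\pi}\int_{\mathbb R}|\widehat{L^{(\Delta)}(1,\cdot)}(\zeta)|^2 e^{-\varepsilon\zeta^2/2}\, {\rm d}\zeta$, which increases monotonically to $V_1^{(\Delta)}$ as $\varepsilon\downarrow0$. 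Conditionally on $B$, the variable $\Delta_s-\Delta_u=\int_{\mathbb R}(L^{(B)}(s,x)-L^{(B)}(u,x))\, {\rm d}W^{(1)}(x)$ is centered Gaussian with variance $\sigma^2(s,u):=\int_{\mathbb R}(L^{(B)}(s,x)-L^{(B)}(u,x))^2\, {\rm d}x$, so that $\mathbb{E}[\phi_\varepsilon(\Delta_s-\Delta_u)\mid B]=(2\pi(\sigma^2(s,u)+\varepsilon))^{-1/2}$, which increases to $(2\pi\sigma^2(s,u))^{-1/2}$ as $\varepsilon\downarrow0$. Two monotone passages to the limit (Tonelli) then give
\[\mathbb{E}\left[V_1^{(\Delta)}\right]=\int_0^1\!\!\int_0^1\mathbb{E}\left[(2\pi\sigma^2(s,u))^{-1/2}\right]\, {\rm d}s\, {\rm d}u.\]

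The crucial step is to identify the law of $\sigma^2(s,u)$. For $u\le s$, the increment $L^{(B)}(s,\cdot)-L^{(B)}(u,\cdot)$ is the local time of $B$ over the interval $[u,s]$; by the Markov property and the translation invariance of Brownian local time it coincides, after the spatial shift $x\mapsto x+B_u$, with the local time of a standard Brownian motion run for time $s-u$. Hence $\sigma^2(s,u)\stackrel{d}{=}V^{(B)}_{s-u}$, and Brownian scaling ($L^{(B)}(ct,\cdot)\stackrel{d}{=}\sqrt c\,L^{(B)}(t,\cdot/\sqrt c)$) yields $\sigma^2(s,u)\stackrel{d}{=}(s-u)^{3/2}V_1^{(B)}$, so that $\mathbb{E}[(2\pi\sigma^2(s,u))^{-1/2}]=(2\pi)^{-1/2}|s-u|^{-3/4}\,\mathbb{E}[(V_1^{(B)})^{-1/2}]$. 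Finally the elementary computation $\int_0^1\!\int_0^1|s-u|^{-3/4}\, {\rm d}s\, {\rm d}u=2\int_0^1 4s^{1/4}\, {\rm d}s=\tfrac{32}{5}$ together with $\tfrac{32}{5}(2\pi)^{-1/2}=\tfrac{16}{5}\sqrt{2/\pi}$ produces exactly~\eqref{YYY}.

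The step requiring the most care is the distributional identification $\sigma^2(s,u)\stackrel{d}{=}V^{(B)}_{s-u}$, which rests on the joint continuity of $L^{(B)}$ (so that the occupation density and the shift manipulation are legitimate) together with the Markov property and spatial homogeneity of Brownian motion; the monotonicity of both approximating families makes the exchange of expectation and the limit $\varepsilon\downarrow0$ immediate, and the finiteness of $\mathbb{E}[(V_1^{(B)})^{-1/2}]$ is automatic \emph{a posteriori}, since the left-hand side $\mathbb{E}[(\Xi_1^{(3)})^2]$ is finite by~\eqref{sqInt}.
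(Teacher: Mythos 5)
Your proposal addresses only the final claim~\eqref{YYY}; this leaves a genuine gap, because the statement to be proved is the whole of Theorem~\ref{THH}, and the bulk of the paper's proof is precisely the inductive backbone you take for granted. Nothing in your argument establishes that $\Xi^{(p)}$ is well defined for every $p$, that it is $\alpha_p$-self-similar, that it has stationary increments, that it admits a local time satisfying~\eqref{sqInt}, that it has a locally $\gamma$-H\"older continuous modification for $\gamma<\alpha_p-\frac12$, or the general identity $\mathbb E[(\Xi_1^{(p)})^2]=\mathbb E[\int_{\bb R}(L^{(p-1)}(1,x))^2\,{\rm d}x]$ for all $p\ge 2$. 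These points are not routine: in the paper, \eqref{sqInt} is proved by induction (on $p-1$ \emph{and} $p-2$) via the Geman--Horowitz criterion \cite[Theorem (21.9)]{GH}, a Cauchy--Schwarz bound relating $V_1$ to the running maximum $\Xi^{(p-1,*)}$, and a Gaussian maximal (Borell-type) inequality conditionally on $\Xi^{(p-2)}$; self-similarity and stationarity also require their own arguments (scaling of local times, shift of the local time by $\Xi^{(p-1)}_t$). Moreover, your computation of~\eqref{YYY} itself consumes these ingredients: you need $\Delta=\Xi^{(2)}$ to admit a (jointly continuous, square-integrable) local time to run the occupation-density and Plancherel steps, and you invoke the conditional It\^o isometry identity as "established in the previous part of the theorem" --- which is exactly the part your proposal does not prove. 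As a proof of Theorem~\ref{THH} the proposal is therefore incomplete.

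Within its scope, however, your derivation of~\eqref{YYY} is correct, and it takes a genuinely different route from the paper's. The paper quotes two external results from~\cite{CGPPS-AOP-2014}: formula (5) of Theorem 3 there, which expresses $\int_{\bb R}\mathbb E[(L^{(\Delta)}(1,x))^2]\,{\rm d}x$ as $\int_{[0,1]^2}\int_{\bb R}p_{\Delta_u,\Delta_v}(x,x)\,{\rm d}x\,{\rm d}u\,{\rm d}v$, and Corollary 2 there, which gives $p_{\Delta_t}(0)=(2\pi)^{-1/2}\,\mathbb E\bigl[(V_t^{(B)})^{-1/2}\bigr]$; stationarity of increments and Brownian-local-time scaling then yield the constant $\frac{32}{5}(2\pi)^{-1/2}=\frac{16}{5}\sqrt{2/\pi}$. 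You instead re-derive both cited facts by hand: mollification plus Plancherel identifies $\mathbb E[V_1^{(\Delta)}]$ with $\int_0^1\!\int_0^1\mathbb E[(2\pi\sigma^2(s,u))^{-1/2}]\,{\rm d}s\,{\rm d}u$ through two monotone passages to the limit, the conditional Gaussianity of $\Delta$ given $B$ replaces the density formula, and the Markov property plus spatial homogeneity give $\sigma^2(s,u)\stackrel{d}{=}V^{(B)}_{s-u}\stackrel{d}{=}(s-u)^{3/2}V_1^{(B)}$; the final integral agrees with the paper's. This buys self-containedness (no appeal to the density results of~\cite{CGPPS-AOP-2014} beyond existence and continuity of $L^{(\Delta)}$), at the cost of redoing work the paper can simply cite. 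If you supply the missing inductive parts --- essentially reproducing the paper's argument for well-definedness, self-similarity, stationarity, \eqref{sqInt} and H\"older continuity --- your treatment of~\eqref{YYY} would stand as a valid, alternative finish.
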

\begin{rem}
An extension of the above formula~\eqref{YYY} to any $p\geq 4$ could be obtained under the assumption that the random vectors $(\Xi_s^{(p-1)},\Xi_t^{(p-1)}), 0<s<t$ admit a density and it should then be given by 
$$\mathbb E\left[ (\Xi_1^{(p)} )^2\right] =  C_p\,  \mathbb E\left[ \left(\int_{\bb R} (L^{(p-2)}(1,x))^2\, {\rm d}x \right)^{-1/2}\right]\, ,$$
where $C_p= \frac{1}{\sqrt{2\pi}} \frac{1}{\alpha_p (1-\alpha_{p-1})}.$
\end{rem}
\begin{proof}[Proof of Theorem~\ref{THH}]
We proceed by induction. We know that the statement holds true for 
$p=0$ (with $L^{(0)}(t,x)=\mathbf 1_{[0,t)}(x)$) and for $p=1$ (recall that $\Xi^{(1)}$ is a Brownian motion). 

Let $p\ge 2$. Let us assume that the statement holds true for $p-1$ and $p-2$ and let us prove it holds true for $p$. To simplify notations, we write $\alpha:=\alpha_{p-1}$, $L:=L^{(p-1)}$ and $W:=W^{(p-1)}$. 

The fact that $L$ is a.s. square integrable in space and independent of $W$ ensures that conditionally to $\Xi^{(p-1)}$, the process $\Xi^{(p)}$ defines a Gaussian process. 

Let us prove that $\Xi^{(p)}$ is $(1-\frac \alpha 2)-$self-similar. To this end, we fix $c>0$.
Since $\Xi^{(p-1)}$ is $\alpha$-self-similar, its local time $L$ satisfies the following self-similarity condition 
$$(L(ct,x))_{t,x} \overset{(d)}{=} c^{1-\alpha} (L(t,x c^{-\alpha}))_{t,x}\, ,$$
(see e.g.~\cite[Proposition 10.4.8]{Samorodnitsky}).

Therefore
\begin{align*}
\left(\Xi^{(p)}_{ct}\right)_t&:=\left(\int_{\mathbb R}L(ct,x)\, {\rm d}W(x)\right)_t\\
&\overset{(d)}{=} \left(c^{1-\alpha}\int_{\mathbb R} L(t,x/c^\alpha)\, {\rm d}W(x)\right)_t\\
&\overset{(d)}{=} \left(c^{1-\frac{\alpha}2}\int_{\mathbb R} L(t,y)\, {\rm d}W(y)\right)_t\, ,
\end{align*}
where we used the fact that $L$ is independent of $W$ and that $(W(c^\alpha y))_y\overset{(d)}{=}c^{\frac \alpha 2}W$. Thus $\Xi^{(p)}$ is $\left(1-\frac\alpha 2\right)$-self-similar, as announced.\\
The fact that $\Xi^{(p)}$ has stationary increments follows from the following fact. Let $t>0$. We observe that 
\begin{align*}
    \left(\Xi^{(p)}(t+s)-\Xi^{(p)}(t)\right)_s&=\left(\int_{\bb R} \left(L(t+s,x)-L(t,x)\right)\, {\rm d}W(x)\right)_s\\
    &= \left(\int_{\bb R} L'(s,x-\Xi^{(p-1)}_t)\, {\rm d}W(x)\right)_s
\end{align*}
where we set $L'$ for the local time of $\Theta^{(t)}=\left(\Xi^{(p-1)}_{t+s}-\Xi^{(p-1)}_t\right)_s$. 
Since $W$ has stationary increments and is independent of $\Xi^{(p-1)}$, it follows that
\begin{align*}
    \left(\Xi^{(p)}(t+s)-\Xi^{(p)}(t)\right)_s
    &\overset{(d)}{=} \left(\int_{\bb R} L'(s,x)\, {\rm d}W(x)\right)_s\, .
\end{align*}
And, since $\Xi^{(p-1)}$ has stationary increments, we know that $\Theta^{(t)}\overset{(d)}{=}\Xi^{(p-1)}$ and so
\begin{align*}
    \left(\Xi^{(p)}(t+s)-\Xi^{(p)}(t)\right)_s
    &\overset{(d)}{=} \left(\int_{\bb R} L(s,x)\, {\rm d}W(x)\right)_s=\Xi^{(p)}\, .
\end{align*}
This ends the proof of the stationarity of $\Xi^{(p)}$.\\
To prove~\eqref{sqInt}, we 
apply~\cite[Theorem~(21.9)]{GH} to the process~$\Xi^{(p)}$. Since \[\Xi^{(p)}_{t}-\Xi^{(p)}_s\overset{(d)}{=}\Xi^{(p)}_{t-s}\overset{(d)}{=}(t-s)^{1-\frac\alpha 2}\Xi^{(p)}_1\, , \] 
Condition~\cite[(21.10)]{GH} will follow from the finiteness of the following quantity
\begin{align}
\int_{[0,T]^2\times\mathbb R}& \mathbb E\left[e^{i\theta |t-s|^{1-\frac\alpha 2}\Xi_1^{(p)}}\right]{\rm d}s{\rm d}t{\rm d}\theta\\
&= \int_{[0,T]^2}|t-s|^{\frac\alpha 2-1}\, {\rm d}t{\rm d}s\int_{\mathbb R} \mathbb E\left[e^{iu\Xi_1^{(p)}}\right]\,{\rm d}u\, ,
\end{align}
and so from the finiteness of
\[
D:=\int_{\mathbb R} \mathbb E\left[e^{iu\Xi_1^{(p)}}\right]\, {\rm d}u\, .
\]
But, conditioning with respect to $\Xi^{(p-1)}$, $\Xi_1^{(p)}$ has a centered Gaussian distribution with variance $V_1=\int_{\mathbb R} (L(1,x))^2\, {\rm d}x$ the self-intersection local time of the process $\Xi^{(p-1)}$.
Thus
\begin{equation}\label{domD}
D=\int_{\mathbb R}\mathbb E\left[\exp\left(-\frac{u^2 V_1}{2}\right)\right]\, {\rm d}u 
=\sqrt{2\pi} \mathbb E\left[ \frac{1}{\sqrt{V_1}}\right]\leq  2\sqrt{\pi} \mathbb E\left[\sqrt{\Xi^{(p-1,*)}}\right]\, ,
\end{equation}
where we set $\Xi^{(p-1,*)}:= \sup_{s\in [0,1]} \left| \Xi_s^{(p-1)} \right|$ and used the following fact coming from the Cauchy-Schwarz inequality
\begin{align*}
1&=\int_{\mathbb R}L(1,x)\, {\rm d}x=\int_{-\Xi^{(p-1,*)}}^{\Xi^{(p-1,*)}}L(1,x)\, {\rm d}x\\
&\le \sqrt{\int_{-\Xi^{(p-1,*)}}^{\Xi^{(p-1,*)}}\, {\rm d}x}\sqrt{\int_{-\Xi^{(p-1,*)}}^{\Xi^{(p-1,*)}}(L(1,x))^2}\, {\rm d}x=\sqrt{2\Xi^{(p-1,*)}}\sqrt{V_1} \, .
\end{align*}
Using the fact that, conditionally to $\Xi^{(p-2)}$, $\Xi^{(p-1)}$ is centered gaussian, combined e.g. with Borell's inequality, we obtain that
\begin{eqnarray*}
\mathbb E\left[\Xi^{(p-1,*)}\right] &=&\int_0^{\infty} \mathbb P\left( \sup_{s\in [0,1]} | \Xi^{(p-1)}_s | \geq x\right) dx \\
&\leq & \int_0^{\infty} 2\mathbb E\left[ e^{-\frac{x^2}{2V_1^{(p-2)}}}\right]\, {\rm d}x\\
&\leq &   \mathbb E\left[\sqrt{2\pi V_1^{(p-2)}}\right]<+\infty\, ,
\end{eqnarray*}
where we set $V_1^{(p-2)}:=\int_{\mathbb R}(L^{(p-2)}(1,x))^2\, {\rm d}x$ and used~\eqref{sqInt} at range  $p-2$.\\
This implies that $\sqrt{\Xi^{(p-1,*)}}\in {\mathbb L}^2\subset {\mathbb L}^1$. The finiteness of $D$ follows via~\eqref{domD}. As announced, this allows us to apply~\cite[Theorem~(21.9)]{GH} and to conclude~\eqref{sqInt}.

Let $p\geq 2$ and $0\leq s <t $. Then, using self-similarity and stationarity of the increments of $\Xi^{(p)}$, we have 
\[
\bb E[|\Xi^{(p)}(t) - \Xi^{(p)}(s)|^2] = \bb E[\Xi^{(p)}(t-s)^2]\\
= |t-s|^{2\alpha_p} \bb E[\Xi^{(p)}(1)^2]\, ,
\]
where 
\begin{align*}
\bb E\left[\left(\Xi^{(p)}(1)\right)^2\right]&=\mathbb E[V_1]=\int_\bb R \bb E[(L(1,x))^2]\,{\rm d}x  <+\infty\, .
\end{align*}
The Hölder continuity directly follows from classical Kolmogorov's Theorem. 

Let $p=3$. Remark that $\Xi^{(3)}$ is the process $\Gamma$ obtained in Theorem~\ref{THM} or in Theorem \ref{THMtriple} when both random sceneries are given by sequences of i.i.d. Rademacher random variables. 
Using formula (5) in \cite[Theorem 3]{CGPPS-AOP-2014},
\[\begin{aligned}
\bb E[\Xi^{(3)} (1)^2]&=
\int_\bb R \bb E[(L^{(\Delta)}(1,x))^2]\,{\rm dx}\\
&=\int_{[0,1]^2} \int_\bb R p_{\Delta_u,\Delta_v}(x,x) \, {\rm dx}\, {\rm du} {\rm dv}\, ,
\end{aligned}\]
with 
$p_{\Delta_u,\Delta_v}$
        the continuous density of $(\Delta_u, \Delta_v).$
As the density of the random vector $(\Delta_v-\Delta_u,\Delta_v)$
is equal to $p_{\Delta_u,\Delta_v}(y-x,y)$ we can rewrite
\[\int_\bb R p_{\Delta_u,\Delta_v}(x,x) \,{\rm dx}
=\int_\bb R p_{\Delta_v-\Delta_u,\Delta_v}(0,x) \, {\rm dx}
=: p_{\Delta_v-\Delta_u}(0)\, .\]
Then,
\[\begin{aligned}
\bb E[\Xi^{(3)}(1)^2]&=\int_{[0,1]^2} p_{\Delta_v-\Delta_u}(0) \, {\rm du} {\rm dv}
=2\int_{0<u<v<1} p_{\Delta_v-\Delta_u}(0) \, {\rm du} {\rm dv}\\
&=2\int_{0<u<v<1} p_{\Delta_{v-u}}(0) \, {\rm du} {\rm dv}.
\end{aligned}\]
From Corollary 2 in \cite{CGPPS-AOP-2014} and the scaling property of the Brownian motion's local time, we get 
\begin{align*}
p_{\Delta_{v-u}}(0)&=(2\pi)^{-\frac 12}\, 
    {\mathbb E}\left[ \left(\int_{\bb R} (L^{(B)}(v-u,x))^2\, {\rm d}x \right)^{-1/2}\right]\\
    &=(v-u)^{-3/4} (2\pi)^{-\frac 12}\, 
    {\mathbb E}\left[ \left(\int_{\bb R} (L^{(B)}(1,x))^2\, {\rm d}x \right)^{-1/2}\right]\, .
\end{align*}
After computations, we obtain
 $$\bb E[\Xi^{(3)}(1)^2] = \frac{16}{5}\sqrt{\frac{2}{\pi}}  {\mathbb E}\left[ \frac{1}{\sqrt{V^{(B)}_1}}\right]\, ,$$
 where 
 $V^{(B)}_1= \int_{\bb R} (L^{(B)}(1,x))^2\, {\rm d}x$ denotes the self-intersection local time of the real Brownian motion.
\end{proof}

\section{Other three-dimensional randomly oriented models}\label{othermodels}
In our model, the second coordinate is driven by the first one which itself is driven by the third one which is a random walk.\\
It can appear natural to consider other three-dimensional randomly oriented models.\\
Let us first present such models in which one or two coordinates are directly driven by the other ones which will appear to be a one- or two-dimensional random walk.
\begin{enumerate}
\item If, in our model, the orientations $(0,\xi_{x_0}^{(2)},0)$ of the lines of the form $\{(x_0,y,z_0); y\in\mathbb R\}$ are replaced by 
$(0,\xi_{z_0}^{(2)},0)$, then the two first coordinates $(X_n,Y_n)$ correspond to a two-dimensional PAPA driven by the random walk $(Z_n)_n$, that is a couple of PAPAs with independent sceneries but driven by the same random walk $(Z_n)_n$.
\item 
If, we consider now a model in which only the lines parallel to $(1,0,0)$ are oriented, then
the two last coordinates $(Y_n,Z_n)_n$ perform a two-dimensional random walk and the first coordinate $(X_n)_n$ is a PAPA; more precisely~:
\begin{enumerate}
\item If the lines of the form $\{(x,y_0,z_0); x\in\bb Z\}$ have orientation $(\xi_{z_0}^{(1)},0,0)$, then the first coordinate is a PAPA driven by the random walk $(Z_n)_n$.
\item If the lines of the form $\{(x,y_0,z_0); x\in\bb Z\}$ have orientation $(\xi_{y_0}^{(1)},0,0)$, then the first coordinate is a PAPA driven by the random walk $(Y_n)_n$.
\item If the lines of the form $\{(x,y_0,z_0); x\in\bb Z\}$ have orientation $(\xi_{(y_0,z_0)}^{(1)},0,0)$, then the first coordinate is a PAPA driven by the two-dimentional random walk $(Y_n,Z_n)_n$, as studied 
by Bolthausen in~\cite{Bolthausen}.
\end{enumerate}
\end{enumerate}

More intricate 
three-dimensional randomly oriented models  appear when "circular dependence appear":
\begin{enumerate}
\item[(3)] We can e.g. consider a model in which the random orientations of the lines of the form $\{(x,y_0,z_0); x\in\bb Z\}$ 
depend on $y_0$, while the orientations of the random orientations of the lines
$\{(x_0,y,z_0); y\in\bb Z\}$ depend on $x_0$, then the motion of $(X_n)_n$ is driven by the motion of $(Y_n)_n$
and conversely the motion of $(Y_n)_n$ is driven by
the motion of $(X_n)_n$. The study of such model becomes much harder. Lots of questions are open even for the two-dimensional version, that is the random walk  in the randomly oriented Manhattan lattice. This model was introduced in~\cite[Section 5]{GPLN1}, see also the survey~\cite[Section 3.5]{Pene} for  a presentation of this model and a conjecture and~\cite{LTV} for the only existing estimates for this model.
\end{enumerate}

{\bf Acknowledgement.} F. P. and F. W. conducted this work  within the framework of the Henri Lebesgue Center (ANR-11-LABX-0020-01) and 
 with the support of
 the ANR project RAWABRANCH (ANR-23-CE40-0008). 
 F. W. thanks the IRMAR, CNRS UMR 6625, University of Rennes 1, for its hospitality.

\end{document}